\newtheorem{Theorem}{Theorem}[section]
\newtheorem{Corollary}[Theorem]{Corollary}
\newtheorem{Proposition}[Theorem]{Proposition}
\newtheorem{Lemma}[Theorem]{Lemma}
\theoremstyle{definition}
\newtheorem{Definition}[Theorem]{Definition}
\newtheorem{Question}[Theorem]{Question}
\newtheorem{Example}[Theorem]{Example}
\theoremstyle{remark}
\newtheorem{Remark}[Theorem]{Remark}
\DeclareMathOperator{\characteristic}{char}
\DeclareMathOperator{\RGr}{RGr} 
\DeclareMathOperator{\Gr}{Gr} 
\DeclareMathOperator{\St}{St} 
\DeclareMathOperator{\GL}{GL}
\DeclareMathOperator{\SL}{SL}
\newcommand{\abs}[1]{\left|#1\right|}
\DeclareMathOperator{\rank}{rk}
\DeclareMathOperator{\diag}{diag} 
\DeclareMathOperator{\lift}{lift} 
\DeclareMathOperator{\trop}{trop} 
\newcommand{\WLOG}{WLOG} 
\newcommand{\st}{s.\,t.\ } 
\newcommand{\ie}{\textit{i.\,e.\ }} 
\newcommand{\eg}{\textit{e.\,g.\ }} 
\newcommand{\Z}{\mathbb{Z}}
\newcommand{\Q}{\mathbb{Q}}
\newcommand{\R}{\mathbb{R}}
\newcommand{\CC}{\mathbb{C}}
\newcommand{\F}{\mathbb{F}}
\newcommand{\K}{\mathbb{K}}
\newcommand{\Acal}{\mathcal{A}}
\newcommand{\Mcal}{\mathcal{M}}
\newcommand{\Tcal}{\mathcal{T}}
\newcommand{\Rcal}{\mathcal{R}}
\DeclareMathOperator{\GPr}{{{GP\!_r}}}
\DeclareMathOperator{\GP}{{{GP}}}
\DeclareMathOperator{\sgn}{{{sgn}}}
\DeclareMathOperator{\spa}{{{span}}}
\newcommand{\Gcal}{\mathcal G}
\newcommand{\latproj}[1]{{\bar #1}} 
\DeclareMathOperator{\sym}{Sym} 
\tikzstyle{bonn1colour}=[red!90!black]
\tikzstyle{bonn2colour}=[cyan!78!black]
\tikzstyle{bonn3colour}=[green!80!black]
\tikzstyle{bonn4colour}=[blue!90!black]
\definecolor{darkgreen}{rgb}{0,0.8,0}
\date{\today}
\begin{document}
\newgeometry{twoside}  
\title
{%
  On powers of Pl\"ucker coordinates and representability of arithmetic matroids\tnoteref{t1}
}
\author{Matthias Lenz\fnref{fn1}}
\address{Universit\'e de Fribourg, D\'epartement de Math\'ematiques, 1700 Fribourg, 
Switzerland}
\ead{maths@matthiaslenz.eu}

\fntext[fn1]{%
The author
was supported by a
fellowship within the postdoc programme of the German Academic Exchange Service (DAAD).%
}
\tnotetext[t1]{%
\textcopyright 2019. This manuscript version is made available under the CC-BY-NC-ND 4.0 license \url{http://creativecommons.org/licenses/by-nc-nd/4.0/}.
}

\begin{keyword}
Grassmannian \sep Pl\"ucker coordinates \sep arithmetic matroid \sep regular matroid \sep representability \sep vector configuration
 
\MSC[2010]{Primary: 
05B35 \sep 
14M15.  
Secondary:
14T05. 
}
\end{keyword}

\begin{abstract}
The first problem we investigate is the following: given $k\in \R_{\ge 0}$ and a vector $v$ of 
 Pl\"ucker coordinates of a point in the real Grassmannian, is the vector obtained by taking the $k$th power of each entry of $v$ again a vector of Pl\"ucker coordinates?
 For $k\neq 1$, this is true if and only if the  corresponding matroid is regular. Similar results hold over other fields.  We also describe the subvariety of the Grassmannian that consists of all the points that define a regular matroid.

 The second topic is a related problem for arithmetic matroids. Let $\Acal = (E, \rank, m)$ be an arithmetic matroid and let $k\neq 1 $ be a non-negative integer.
 We prove that if $\Acal$ is representable and the underlying matroid is non-regular, then $\Acal^k := (E, \rank, m^k)$ is not representable.
 This provides a large class of examples of arithmetic matroids that are not representable. On the other hand, if the underlying matroid is regular and an additional condition is satisfied, then $\Acal^k$ is representable.
 Bajo--Burdick--Chmutov have recently discovered that arithmetic matroids of type $\Acal^2$ arise naturally in the study of 
 colourings and flows on CW complexes. 
 In the last section, we prove a family of necessary conditions for representability of arithmetic matroids.
\end{abstract}

\maketitle
\thispagestyle{plain}
\pagestyle{fancy}

\section{Introduction}

 \begin{Theorem}
 \label{Theorem:NonRegularMatricesNew}
  Let $X$ be a $(d\times N$)-matrix of full rank $d \le N$ with entries in $\R$.
  Let $k\neq 1$ be a non-negative real number.
  Then $X$ represents a regular matroid if and only if the following condition is satisfied:
  there is a $(d\times N$)-matrix $X_k$ with entries in $\R$ \st
  for each maximal minor $\Delta_I(X)$, indexed by $I\in \binom{[N]}{d}$, $\abs{\Delta_I(X)}^k = \abs{\Delta_I(X_k)}$
  holds.
 If $k$ is a non-negative integer, then the same statement holds over any ordered field $\K$.
 \end{Theorem}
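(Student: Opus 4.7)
The plan is to prove both implications separately.

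For the forward direction, assume $X$ represents a regular matroid $M$. Since $M$ is regular, it has a totally unimodular representation $Y$ with entries in $\Z \subset \K$. By the uniqueness (up to projective equivalence) of representations of regular matroids over a fixed field, one has $X = A Y D$ for some $A \in \GL_d(\K)$ and an invertible diagonal $D \in \K^{N\times N}$, so that $\Delta_I(X) = \det(A)\cdot\Delta_I(Y)\cdot\prod_{i\in I}D_{ii}$ with $\Delta_I(Y)\in\{-1,0,+1\}$. Setting
\[
X_k := \diag(|\det A|^k,1,\ldots,1)\cdot Y\cdot\diag(|D_{11}|^k,\ldots,|D_{NN}|^k)
\]
gives $|\Delta_I(X_k)| = |\det A|^k\cdot|\Delta_I(Y)|\cdot\prod_{i\in I}|D_{ii}|^k = |\Delta_I(X)|^k$, using $|\Delta_I(Y)|^k = |\Delta_I(Y)|$ since $|\Delta_I(Y)|\in\{0,1\}$. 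This works for any non-negative real $k$ over $\R$, and for any non-negative integer $k$ over $\K$ (in which case $|\det A|^k$ and $|D_{ii}|^k$ are $\K$-valued by pure multiplication).

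For the converse, I argue contrapositively: assume $M(X)$ is non-regular. Because $\K$ is ordered, it has characteristic zero, so the Fano matroid $F_7$ and its dual are not $\K$-representable, and Tutte's excluded-minor characterization of regular matroids forces $M(X)$ to contain $U_{2,4}$ as a minor. This provides $I \in \binom{[N]}{d-2}$ and distinct $a,b,c,d \in [N]\setminus I$ such that the six sets $I\cup\{x,y\}$ with $\{x,y\}\subset\{a,b,c,d\}$ are all bases of $M(X)$. Writing $p_{xy} := \Delta_{I\cup\{x,y\}}(X)$, the classical three-term Plücker relation reads
\[
p_{ab}p_{cd} - p_{ac}p_{bd} + p_{ad}p_{bc} = 0,
\]
with the three products non-zero. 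Setting $A := |p_{ab}p_{cd}|$, $B := |p_{ac}p_{bd}|$, $C := |p_{ad}p_{bc}|$, these are positive and exactly one of them is the sum of the other two; after relabelling, say $B = A + C$.

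If a matrix $X_k$ satisfying the conclusion of the theorem existed, its minors $q_{xy} := \Delta_{I\cup\{x,y\}}(X_k)$ would satisfy the same Plücker relation while $|q_{xy}| = |p_{xy}|^k$, so the same argument would force one of $A^k, B^k, C^k$ to be the sum of the other two. For $k = 0$ this reads $1 = 1+1$, absurd; for $k > 0$, the fact that $B > A$ and $B > C$ combined with the monotonicity of $t\mapsto t^k$ on $\K_{>0}$ rules out the two alternative sign patterns and leaves only $(A+C)^k = A^k + C^k$. Over $\R$ this contradicts strict convexity (for $k > 1$) or strict concavity (for $0 < k < 1$) of $t\mapsto t^k$ on $\R_{>0}$. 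Over an ordered field $\K$ with integer $k \ge 2$, the binomial expansion
\[
(A+C)^k - A^k - C^k = \sum_{j=1}^{k-1}\binom{k}{j}A^{k-j}C^j
\]
is strictly positive because $A, C > 0$ and $\K$ has characteristic zero, yielding the required contradiction. The main obstacle is the clean reduction of the sign and labelling ambiguities in the Plücker relation to the single scalar inequality $(A+C)^k \neq A^k + C^k$; once that bookkeeping is in place, the concluding inequality is elementary.
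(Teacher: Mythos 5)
Your argument is correct, and its two halves rest on the same pillars as the paper's proof: for the regular direction, the Brylawski--Lucas uniqueness of representations of regular matroids yields $X=TAD$ with $A$ totally unimodular, and raising the diagonal scalings to the $k$th power (in absolute value) produces $X_k$ --- this is exactly the paper's construction in Theorem~\ref{Proposition:MatricesTwoMultiplicities}; for the non-regular direction, both proofs combine a $U_{2,4}$ minor with the three-term Grassmann--Pl\"ucker relation and a convexity/concavity (resp.\ binomial) estimate. Where you genuinely streamline things: the paper (Lemma~\ref{Lemma:U24} plus the block-matrix computation in the proof of Theorem~\ref{Theorem:NonRegularMatricesNewNew}) extracts actual $2\times 4$ matrices $A$, $A_k$ from $X$, $X_k$ and must introduce a rescaling constant $\kappa$ to compare them, whereas you work directly with the six minors $\Delta_{I\cup\{x,y\}}$ of the full matrices; and your observation that $B=A+C$ forces $B^k>A^k,C^k$ by monotonicity, so that the only possible relation for $X_k$ is $(A+C)^k=A^k+C^k$, replaces the paper's three-way sign case analysis for $0<k<1$ with a single clean inequality. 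Two technical points you gloss over, both handled in the paper's Corollary~\ref{Corollary:BLrepresentationOfRegularMatroid}: the uniqueness theorem as stated involves a field automorphism $\psi$ (absorbed because totally unimodular matrices are fixed entrywise by $\psi$) and is only available for rank $\ge 3$ (ranks $1$ and $2$ need the explicit construction given there); and the assertion that the three-term relation holds among the augmented minors $\Delta_{I\cup\{x,y\}}$, together with the existence of an independent $I$ of size exactly $d-2$ making all six sets bases, deserves the one-line justification via $\Delta_{I\cup\{x,y\}}(X)=c\cdot\Delta_{xy}\bigl((X/I)|_{\{a,b,c,d\}}\bigr)$ that the paper supplies in the proof of Proposition~\ref{Proposition:IdealRegularGrassmannian}. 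None of these affects the validity of your argument.
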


 Recall that a ($d\times N$)-matrix $X$ that has full rank $d$ 
 represents a regular matroid if and only if there is 
 a
 totally unimodular ($d\times N$)-matrix $A$ that represents the same matroid, \ie a maximal minor of $X$ is $0$ if and only if 
 the corresponding minor of $A$ is $0$.
 A matrix is totally unimodular if and only if all its non-singular square submatrices have
 determinant $\pm 1$.

  Theorem~\ref{Theorem:NonRegularMatricesNew} can be restated 
  as a result 
  on Grassmannians, which are
   fundamental objects in algebraic geometry (\eg \cite{OrientedMatroidsBook,harris-1995}).
  If $d\le N$, the maximal minors of a $(d\times N)$-matrix $X$ with entries in a field $\K$ are known as the  \emph{Pl\"ucker coordinates}
  of the space spanned by the rows of $X$.
  An element of the  \emph{Grassmannian} $\Gr(d,N)$, \ie the set of all $d$-dimensional subspaces of $\K^N$, 
  is uniquely determined by its Pl\"ucker coordinates (up to a scalar, non-zero multiple).
  This yields an embedding of the Grassmannian into $(\binom{N}{d}-1)$-dimensional projective space.  
  An element $\xi \in \Lambda^d\K^N$, the $d$th exterior power of $\K^N$, is called an antisymmetric tensor.
  It is decomposable if and only if there are $v_1,\ldots, v_d\in \K^N$ \st
  $\xi = v_1\wedge\ldots\wedge v_d$.
  It is known that 
   \begin{equation}
    \xi = v_1 \wedge \ldots \wedge v_d = \sum_{\substack{I=\{i_1,\ldots, i_d\} \\1\le i_1 < \ldots < i_d \le N}} 
    \Delta_I(X) \: e_{i_1} \wedge \ldots \wedge e_{i_d} \in \Lambda^d \K^N,
 \end{equation}
  where $X$ denotes the matrix whose rows are $v_1,\ldots, v_d$.
  Hence Theorem~\ref{Theorem:NonRegularMatricesNew}  can be restated as a result 
  describing when the $k$th power 
  of a point in the Grassmannian (defined as the element-wise $k$th power of the Pl\"ucker coordinates) or of a decomposable antisymmetric tensor is again a point in the Grassmannian
  or a decomposable antisymmetric tensor, respectively.
  Such a result could potentially be interesting in the context of tropical geometry. 
  Over a suitable field, the operation 
  of taking the $k$th power corresponds  in the tropical setting to scaling  by the factor $k$.
  
  Very recently, Dey--G\"orlach--Kaihnsa studied a more general problem 
  using algebraic methods: they considered coordinate-wise powers of subvarieties of $\mathbb{P}^n$  \cite{dey-goerlach-kaihnsa-2018}.

 \smallskip
 Our second topic
 is the question whether the $k$th power of a representable arithmetic matroid is again representable. 
 We will see that this is in a certain sense a discrete analogue of the first topic.
 An arithmetic matroid $\Acal$ is a triple $(E, \rank, m)$, where $(E,\rank)$ is a matroid on the ground set $E$ with rank function $\rank$ 
 and $m: 2^E\to \Z_{\ge 1}$ is the so-called multiplicity function, that satisfies certain axioms. In the representable case, \ie when the 
 arithmetic matroid is determined by a list of integer vectors,
 this multiplicity function records data such as the 
 absolute value of the determinant of a basis.

 Arithmetic matroids where recently introduced by D'Adderio--Moci 
\cite{moci-adderio-2013}.
 A further generalization are  matroids over a ring by Fink--Moci \cite{fink-moci-2016}.
 Arithmetic matroids capture many combinatorial and topological properties of toric arrangements \cite{callegaro-delucchi-2017,lawrence-2011,moci-tutte-2012}
 in a similar way as matroids carry information about the corresponding hyperplane arrangements \cite{orlik-terao-1992,stanley-2007}.
 Like matroids, they have an important polynomial invariant called the Tutte polynomial.
 Arithmetic Tutte polynomials also appear in many other contexts, \eg in the study    
 of cell complexes, the 
 theory of vector partition functions, and Ehrhart theory of zonotopes 
 \cite{bajo-burdick-chmutov-2014,lenz-arithmetic-2016,stanley-1991}.

 Let $\Acal=(M, \rank, m)$ be an arithmetic matroid and let $k \in \Z_{\ge 0}$.
 Delucchi--Moci 
 have shown that $\Acal^k :=(M, \rank, m^k)$
 is also an arithmetic matroid \cite{delucchi-moci-2018}.
 We address the following question in this paper:
 given a representable arithmetic matroid $\Acal$, is $\Acal^k$ also representable?
 If the underlying matroid is non-regular, this is false. On the other hand, if the underlying matroid is regular 
 and an additional condition (weak multiplicativity) is satisfied, it is representable and we are able to calculate a representation.
  This leads us to define the classes of weakly and strongly multiplicative arithmetic matroids.
  It turns out that arithmetic matroids that are both regular and weakly/strongly multiplicative play a role
  in the theory of arithmetic matroids
  that is
  similar to the role of regular matroids in matroid theory: they arise from totally unimodular matrices
  and they preserve some nice properties of  arithmetic matroids defined by a labelled graph  \cite{moci-adderio-flow-2013},
 just like regular matroids generalize and preserve nice properties of  graphs/graphic matroids. 
 Furthermore, representations of weakly multiplicative arithmetic matroids are unique, up to some obvious transformations \cite{lenz-unique-2019}.

 Recently, various authors have extended notions from graph theory such as 
 spanning trees, colourings, and flows 
 to higher dimensional cell complexes (\eg \cite{beck-breuer-godkin-martin-2014,duval-klivans-martin-2011,duval-klivans-martin-2015}),
 where graphs are being considered as $1$-dimensional cell complexes.
 Bajo--Burdick--Chmutov introduced the 
   modified $j$th Tutte--Krushkal--Renardy (TKR) polynomial of a cell complex, which 
   captures the number of cellular $j$-spanning
trees, counted with multiplicity the square of the cardinality of the torsion part of a certain homology group \cite{bajo-burdick-chmutov-2014}. 
 This invariant was introduced by Kalai \cite{kalai-1983}.
   The   TKR polynomial is the Tutte polynomial of the arithmetic matroid obtained by squaring
   the multiplicity function of the arithmetic matroid defined by the $j$th boundary matrix of the cell complex 
      \cite[Remark~3.3]{bajo-burdick-chmutov-2014}.
      This explains why it is interesting to consider the arithmetic matroid $\Acal^2$, and more generally, the $k$th power of 
      an   arithmetic matroid.

 While representable arithmetic matroids simply arise from a list of integer vectors,
 it is a more difficult task to construct large classes of arithmetic matroids that are not representable.
  Delucchi--Riedel constructed a class of non-representable arithmetic matroids
  that arise from group actions on semimatroids, \eg from arrangements of pseudolines
 on the surface of a two-dimensional torus \cite{delucchi-riedel-2018}.
 The arithmetic matroids of type $\Acal^k$, where $\Acal$ is representable and has an underlying matroid 
 that is non-regular is  a further large class of examples.

  \smallskip
   Let us describe how the objects studied in this paper are related to each other and why 
   the arithmetic matroid setting can be considered to be a discretization of the Pl\"ucker vector setting.
   For a principal ideal domain $R$, let $\St_R(d,N)$ denote the set of all $(d\times N)$-matrices with entries in $R$ that have full rank
   and let $\GL(d,R)$ denote the set of invertible $(d\times d)$-matrices over $R$.
   Let $\Gr_R(d,N)$ denote the Grassmannian over $R$, \ie  $\St_R(d,N)$ modulo a left action of $\GL(d,R)$.
 If $R$ is a field $\K$, this is the usual Grassmannian. 
 Let $\Mcal$ be a matroid of rank $d$ on $N$ elements that is realisable over $\K$.
 The realisation space of $\Mcal$, the set $\{ X \in \St_\K(d,N) : X\text{ represents } \Mcal\}$,
 can have a very complicated structure by an unoriented version of Mn\"ev's universality theorem \cite{OrientedMatroidsBook,mnev-1988}.
 It is invariant under a  $\GL(d,\K)$ action from the left and
 a right action of non-singular diagonal $(N\times N)$-matrices, \ie of the algebraic torus $(\K^*)^N$.
 This leads to a stratification of the  Grassmannian $\Gr_\K(d,N)$ 
 into the matroid strata  $\Rcal(\Mcal) = \{ \bar X \in \K^{d\times N} / \GL(d,\K) : X \text{ represents } \Mcal\}$ \cite{gelfand-gorseky-macpherson-serganova-1987}.
 Arithmetic matroids 
 correspond to the setting $R=\Z$:
 the set of representations of a fixed  torsion-free arithmetic matroid $\Acal$ of rank $d$ on $N$ elements
 is a subset of $ \St_{\Z}(d,N) $ that is invariant under a left action of
 $ \GL(d, \Z) $  and a right action of diagonal $(N\times N)$-matrices with entries in $\{\pm 1\}$, \ie of
 $(\Z^*)^N$, the maximal multiplicative subgroup of $\Z^N$.
  This leads to a stratification of the discrete Grassmannian $\Gr_\Z(d,N)$ into arithmetic matroid strata $\Rcal(\Acal)=
  \{ \bar X \in \St_\Z(d,N) / \GL(d,\Z) : X \text{ represents } \Acal \}$.

\subsection*{Organisation of this article}
 The remainder of this article is organised as follows.
 In Section~\ref{Section:Results} we will state and discuss 
 our main results on representability of 
 powers of Pl\"ucker coordinates and powers of arithmetic matroids and we will
 give some examples.
 In Section~\ref{Section:SubvarietiesGrassmannian} we will define and study    two related subvarieties of the 
 Grassmannian: the regular Grassmannian, \ie the set of all points that correspond to a regular matroid
 and the set of Pl\"ucker vectors that have a $k$th power.
 The mathematical background will be explained in Section~\ref{Section:Background}.
 The main results will be proven in Sections~\ref{Section:Proofs} to~\ref{Section:MultiplicativeRegularAriMatroids}.
 In Section~\ref{Section:Towards} we will prove some necessary conditions for the representability of arithmetic matroids
 that are derived from the Grassmann--Pl\"ucker relations.

\section{Main results}
\label{Section:Results}

In this section we will present our main results.
 We will start with the results on powers of Pl\"ucker coordinates in Subsection~\ref{Subsection:Matrices}.
 We will present the analogous results on arithmetic matroids in Subsection~\ref{Subsection:ArithmeticMatroids}.
  See Figure~\ref{Figure:Overview} for an overview of our results in both settings.
  In Subsection~\ref{Subsection:TwoDifferentMultiplicityFunctions} we will examine which of the results on arithmetic matroids
  still hold if there are two different multiplicity functions and in Subsection~\ref{Subsection:LabelledGraphsResults} we will
  study the special case of arithmetic matroids defined by a labelled graph.
 In Subsection~\ref{Subsection:Examples} we will give some additional examples.
 
 \smallskip
 Recall that a matroid is \emph{regular} if it can be represented over every field,
 or equivalently, if it can be represented by a totally unimodular matrix.
 It will be important that
 a regular matroid can also be represented by a matrix that is not totally unimodular.
 This is true for all examples in Subsection~\ref{Subsection:Examples}.
 Two other characterisations of regular matroids are given in Subsection~\ref{Subsection:RegularMatroids}.

\subsection{Powers of Pl\"ucker vectors}
 \label{Subsection:Matrices}
  For a ring $R$ (usually $R=\Z$ or $R$ a field) and integers $d$ and $N$, we will write $R^{d\times N}$ to denote the set of $(d\times N)$-matrices with entries in $R$.
  If $E$ is a set of cardinality $N$, each $X\in R^{d\times N}$ 
  corresponds to a list (or finite sequence) $X = (x_e)_{e\in E}$ of $N$ vectors in $R^d$.
  We will use the notions of list of vectors and matrix interchangeably.
  Slightly abusing notation, we will write $X\subseteq R^d$
  to say that $X$ is a list of elements of $R^d$.
  Let $d\le N$ and $I\subseteq \binom{[N]}{d}$, \ie $I$ is a subset of $[N]:=\{1,\ldots, N\}$ of cardinality $d$.
  Then for $X\in R^{d\times N}$, $\Delta_I(X)$ denotes the maximal minor of $X$ that is indexed by $I$, \ie the determinant
  of the square submatrix of $X$ that consists of the columns that are indexed by $I$.

 \begin{Theorem}[non-regular, Pl\"ucker]
 \label{Theorem:NonRegularMatricesNewNew}
  Let $X\in \R^{d\times N}$ be a matrix of full rank $d\le N$.
  Suppose that the matroid represented by $X$ is non-regular.
  Let $k\neq 1$ be a  non-negative real number.
  Then 
  there is no 
  $X_k\in \R^{d\times N}$
  \st
  $\abs{\Delta_I(X)}^k = \abs{ \Delta_I(X_k) }$ holds for each $I\in \binom{[N]}{d}$.  Here, we are using the convention $0^0=0$.
  
  If $k$ is an integer, then the same statement holds over any ordered field $\K$.
 \end{Theorem}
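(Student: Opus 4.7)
The plan is to reduce to a $U_{2,4}$ minor of the matroid $M$ represented by $X$ and derive a contradiction from the associated three-term Grassmann--Pl\"ucker relation. Since an ordered field has characteristic zero while both $F_7$ and $F_7^\ast$ are representable only in characteristic two, Tutte's excluded-minor theorem forces any non-regular, $\K$-representable matroid to contain $U_{2,4}$ as a minor.

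The first step is to translate this minor into concrete data about $X$. Writing the minor as $(M/C)\setminus D$ on $\{i_1,i_2,i_3,i_4\}$ and letting $J\subseteq C$ be a basis of $M|_C$, a short rank computation gives $|J| = d-2$ and that each of the six sets $J\cup\{i_a,i_b\}$ with $\{a,b\}\in\binom{\{1,2,3,4\}}{2}$ is a basis of $M$. All six Pl\"ucker coordinates $\Delta_{J\cup\{i_a,i_b\}}(X)$ are therefore non-zero, and the three-term Grassmann--Pl\"ucker relation
\[
\Delta_{J\cup\{i_1,i_2\}}(X)\,\Delta_{J\cup\{i_3,i_4\}}(X) - \Delta_{J\cup\{i_1,i_3\}}(X)\,\Delta_{J\cup\{i_2,i_4\}}(X) + \Delta_{J\cup\{i_1,i_4\}}(X)\,\Delta_{J\cup\{i_2,i_3\}}(X) = 0
\]
exhibits three non-zero reals summing (with signs) to zero; since any three non-zero reals with this property satisfy a triangle equality on their absolute values, we obtain $A = B + C$ for some labelling of the three absolute-value products $A,B,C$.

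If a matrix $X_k$ as in the statement existed, the same Pl\"ucker relation applied to $X_k$ together with $|\Delta_I(X_k)| = |\Delta_I(X)|^k$ would force $A^k$, $B^k$, $C^k$ to satisfy a triangle equality as well. Since $A = B + C$ with $B, C > 0$ gives $A > \max(B, C)$, the only candidate is $A^k = B^k + C^k$. Writing $t = B/A \in (0, 1)$, this becomes $1 = t^k + (1-t)^k$, to be compared with $t + (1-t) = 1$: strict convexity of $x \mapsto x^k$ on $[0,1]$ for $k > 1$ yields $t^k + (1-t)^k < 1$; strict concavity for $0 < k < 1$ yields the reverse; and for $k = 0$ the right-hand side equals $2$. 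Hence the equality fails for every non-negative real $k \neq 1$. Over a general ordered field the same inequality follows, for integer $k \geq 2$, by expanding $(B+C)^k$ and discarding the strictly positive cross-terms, while $k = 0$ is trivial.

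The main obstacle is the reduction step: combining Tutte's theorem with the characteristic-zero restriction to guarantee a $U_{2,4}$ minor, and then extracting $J$ and the six bases from that minor. Once this combinatorial dictionary is in place, the Pl\"ucker-relation contradiction is a short computation in the ordered field $\K$.
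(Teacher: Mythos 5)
Your argument is essentially the paper's: reduce to a $U_{2,4}$ minor via the excluded-minor characterisation of regular matroids (using that ordered fields have characteristic $0$), apply the three-term Grassmann--Pl\"ucker relation to obtain a triangle equality $A=B+C$ among absolute values of products of minors, and rule out the corresponding equality for the $k$-th powers by strict convexity/concavity over $\R$, by the binomial theorem over a general ordered field, and by parity for $k=0$. Your reduction is in fact slightly cleaner than the paper's: by applying the three-term relation directly to the $d$-minors $\Delta_{J\cup\{i_a,i_b\}}$ of $X$ and of $X_k$, you avoid the explicit block-triangularisation and the correcting scalar $\kappa$ that the paper needs in order to manufacture genuine $2\times 4$ matrices for Lemma~\ref{Lemma:U24}.

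One step does not hold as written: from an \emph{arbitrary} presentation $(M/C)\setminus D$ of the $U_{2,4}$ minor it does not follow that a basis $J$ of $M|_C$ has cardinality $d-2$, nor that the six sets $J\cup\{i_a,i_b\}$ are bases of $M$. For $M=U_{2,4}\oplus U_{1,1}$ (so $d=3$) the minor is obtained with $C=\emptyset$ and $D$ the coloop, whence $J=\emptyset$ and $J\cup\{i_a,i_b\}$ does not even index a maximal minor of $X$. The correct rank computation only gives $2=\rank\bigl((M/C)\setminus D\bigr)\le \rank(M/C)=d-\abs{J}$, with equality for suitable presentations only. You must therefore first normalise the presentation so that $C$ is independent of cardinality $d-2$ (every minor admits such a presentation, see \cite[Lemma~3.3.2]{MatroidTheory-Oxley}), or else, as the paper does, keep $J$ arbitrary and pad the index sets by an auxiliary independent set $K_0$ completing $J\cup I$ to a spanning set, at the cost of reintroducing the scalar $\kappa$. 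With this repair the proof is complete.
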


 Throughout this article, we are using the convention $0^0 = 0$. 
 The reason for this is that for any $k$, including the case $k=0$,
 we want that $ \Delta_I(X_k) = \Delta_I(X)^k = 0 $ if $\Delta_I(X)=0$, \ie 
 a non-basis of $X$ should  also be a non-basis of $X_k$.

 Recall that an \emph{ordered field} is a field $\K$ together with a total order $\le$ on $\K$ \st
 for all $a,b,c\in \K$, $a\le b$ implies $a+c \le b+c$ and $0\le a,b$ implies $0\le ab$. 
 For $x\in \K$, the absolute value is defined by
 \begin{equation}
  \abs{x} := \begin{cases}
                x & x \ge 0 \\
                -x & x<0
             \end{cases}.
 \end{equation}

 Now we will see that for a matrix $X$ that represents a regular matroid, there is a matrix
 $X_k$ whose  Pl\"ucker vector is up to sign equal to the $k$th power of the Pl\"ucker vector
 of $X$. For odd exponents $k$ and over ordered fields, we can even fix the signs.
 
\begin{Theorem}[regular, Pl\"ucker]
 \label{Proposition:RegularMatrices}
   Let $\K$ be a field.
   Let $X\in \K^{d\times N}$ be a matrix  of rank $d\le N$.
   Suppose that the underlying matroid is regular.
 
 Then,
  \begin{enumerate}[(i)]
    \item for any integer $k$, there is $X_k \in \K^{d\times N}$ 
    \st 
      the Pl\"ucker coordinates satisfy 
      $\Delta_I(X)^k = \pm \Delta_I(X_k)$  for any $I\in \binom{[N]}{d}$.
      \label{item:RegPlueckerNormal}
    \item 
    If $k$ is odd, then there is $X_k \in \K^{d\times N}$  \st the following stronger equalities hold for all $I\in \binom{[N]}{d}$: $\Delta_I(X)^k = \Delta_I(X_k)$.
  \item 
  \label{enumerate:RegPlueckerOrderedField}
  If  $\K$ is an ordered field,
  then for all $k$ \st the expression $x^k$ is well-defined for all positive elements of $\K$ (\eg $k\in \Z$ or  $\K=\R$ and $k\in \R$), 
  there is   $X_k\in \K^{d\times N}$ 
    \st 
      $\abs{\Delta_I(X)}^k = \abs{\Delta_I(X_k)}$  and  $ \Delta_I(X_k) \cdot \Delta_I(X) \ge 0 $
      for all  $I\in \binom{[N]}{d}$.
  \end{enumerate}
       Here, we are  using the convention $0^0=0$.
\end{Theorem}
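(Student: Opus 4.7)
The plan is to exploit a totally unimodular (TU) representation of the underlying regular matroid together with the uniqueness of representations of regular matroids over a field. Since the matroid represented by $X$ is regular, there exists a TU matrix $A\in\Z^{d\times N}\subseteq\K^{d\times N}$ representing the same matroid, and every maximal minor satisfies $\Delta_I(A)\in\{-1,0,1\}$. By the classical uniqueness of representations of regular matroids over $\K$, any two such representations differ only by left multiplication by an element $G\in\GL(d,\K)$ and right multiplication by an invertible diagonal matrix $D=\diag(d_1,\ldots,d_N)$. Hence I may write $X=GAD$, and a direct computation (multiplicativity of determinants applied column-by-column) yields
\[
 \Delta_I(X)=\det(G)\,\Delta_I(A)\prod_{i\in I}d_i
\]
for every $I\in\binom{[N]}{d}$.

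For parts (i) and (ii), I would set $X_k:=G_kAD_k$ with $D_k:=\diag(d_1^k,\ldots,d_N^k)$ and $G_k\in\GL(d,\K)$ any matrix satisfying $\det(G_k)=\det(G)^k$, for instance $\diag(\det(G)^k,1,\ldots,1)$. Then
\[
 \Delta_I(X_k)=\det(G)^k\,\Delta_I(A)\prod_{i\in I}d_i^k,
\]
while $\Delta_I(X)^k=\det(G)^k\,\Delta_I(A)^k\prod_{i\in I}d_i^k$. Because $\Delta_I(A)\in\{-1,0,1\}$, the factor $\Delta_I(A)^k$ equals $\Delta_I(A)$ when $k$ is odd (giving the exact equality $\Delta_I(X_k)=\Delta_I(X)^k$ of (ii)) and equals $\Delta_I(A)^2\in\{0,1\}$ when $k$ is even (giving $\Delta_I(X_k)=\pm\Delta_I(X)^k$ of (i)). The convention $0^0=0$ is automatically respected because $\Delta_I(A)=0$ if and only if $\Delta_I(X)=0$.

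For part (iii), assuming that $\K$ is ordered and that $k$ is an exponent for which $x^k$ is defined for positive $x\in\K$, I would instead take $D_k:=\diag(\sgn(d_i)|d_i|^k)$ and any $G_k\in\GL(d,\K)$ with $\det(G_k)=\sgn(\det(G))|\det(G)|^k$. A short calculation then gives $\sgn(\Delta_I(X_k))=\sgn(\Delta_I(X))$ for every basis $I$, so that $\Delta_I(X_k)\,\Delta_I(X)\ge 0$, and
\[
 |\Delta_I(X_k)|=|\det(G)|^k\,|\Delta_I(A)|\prod_{i\in I}|d_i|^k=|\Delta_I(X)|^k,
\]
where the last equality uses that $|\Delta_I(A)|\in\{0,1\}$, so $|\Delta_I(A)|^k=|\Delta_I(A)|$ (the case $k=0$ is again consistent by $0^0=0$).

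The only non-routine ingredient is the uniqueness of representations of regular matroids invoked in the first paragraph, namely the fact that any two representations over $\K$ of a regular matroid differ by left multiplication by an element of $\GL(d,\K)$ and right multiplication by an invertible diagonal matrix. I expect this to be the main obstacle: everything else reduces to a direct Pl\"ucker-coordinate computation exploiting that a totally unimodular matrix has all maximal minors in $\{-1,0,1\}$, so the crux of the argument is locating and citing the uniqueness statement cleanly (presumably from the paper's Background section).
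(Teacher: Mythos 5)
Your proposal is correct and follows essentially the same route as the paper: the paper derives this theorem as the special case $k_1=k$, $k_2=0$ of its two-matrix version, whose proof likewise decomposes $X=TAD$ with $A$ totally unimodular (Corollary~\ref{Corollary:BLrepresentationOfRegularMatroidTUM}, which also supplies the rank~$\le 2$ cases and eliminates the field automorphism) and then powers the scaling factors exactly as you do, using a determinant-adjusted matrix in place of $T^k$ for part (iii). The decomposition you flag as the crux is indeed the key imported ingredient, and the paper provides it in Section~\ref{Section:RegularMatroids}.
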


 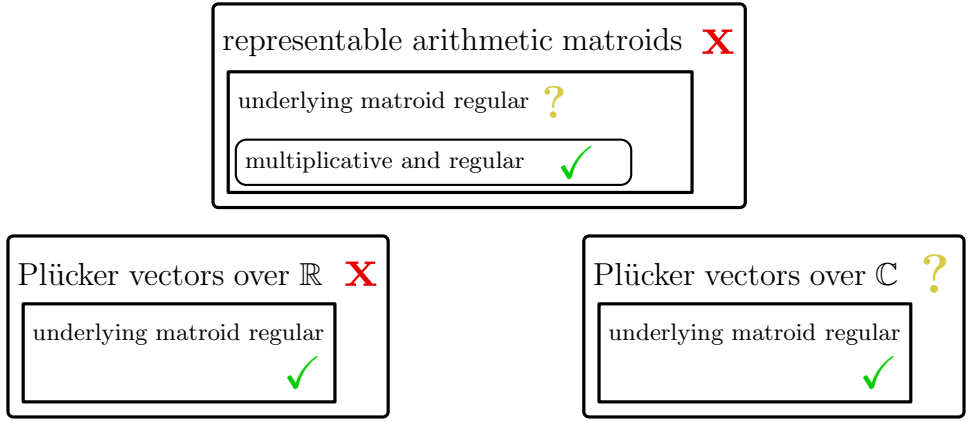
\begin{figure}[t]
 \newcommand{\all}{\Large{representable arithmetic matroids}}
 \newcommand{\sometimes}{underlying matroid regular}
 \newcommand{\yes}{multiplicative and regular}
 \newcommand{\breitebox}{8.5} %
 \newcommand{\textboxwidth}{7.5cm} 
 \newcommand{\reduceheight}{0} %
 \newboolean{regularyes} %
 \setboolean{regularyes}{false}
 \newboolean{alwaysno} %
 \setboolean{alwaysno}{true}
 \begin{center}
  \begin{tikzpicture}[line join=round, scale=1,
 vertex/.style={color=black}]
  %
%
%
 \draw[very thick, rounded corners=2pt] (0,0) rectangle (\breitebox,2.7 - \reduceheight); 
 \node[anchor = west, text width=\textboxwidth] (a) at (0, 2.2 - \reduceheight) {\all}; %
  %
%
 \draw[very thick] (0.2,0.2) rectangle (\breitebox - 0.7, 1.8 - \reduceheight);
 \node[anchor = west] (b) at (0.2, 1.4 - \reduceheight) {\small \sometimes}; 
 \ifthenelse{\boolean{regularyes}}
 {
    \node[anchor = west, green!80!black] (a) at (\breitebox - 1.5, 0.6) {\huge\checkmark};
 }
 {
    \node[anchor = west, yellow!80!black] (a) at (4.2, 1.4) {\huge \bfseries ?};
    \draw[thick, rounded corners=4pt] (0.3,0.3) rectangle (5.5,0.9); %
    \node[anchor = west] (b) at (0.3, 0.6) {\small \yes};
    \node[anchor = west, green!80!black] (a) at (4.4, 0.6) {\huge\checkmark}; %
 }
  \ifthenelse{\boolean{alwaysno}}
  {
   \node[anchor = west, red!90!black] (a) at (\breitebox - 0.7, 2.2 - \reduceheight) {\Huge\bfseries  x};
  }
  {
   \node[anchor = west, yellow!80!black] (a) at (\breitebox - 0.7, 2.2 - \reduceheight) {\Huge\bfseries  ?};
  }
 \end{tikzpicture}
\\[3mm]
\renewcommand{\textboxwidth}{6.5cm}
 
 \renewcommand{\breitebox}{6}
 \renewcommand{\reduceheight}{0.3} %
 \renewcommand{\all}{\Large Pl\"ucker vectors over $\R$}
 \setboolean{regularyes}{true}
  \begin{tikzpicture}[line join=round, scale=1,
 vertex/.style={color=black}]
  %
%
%
 \draw[very thick, rounded corners=2pt] (0,0) rectangle (\breitebox,2.7 - \reduceheight); 
 \node[anchor = west, text width=\textboxwidth] (a) at (0, 2.2 - \reduceheight) {\all}; %
  %
%
 \draw[very thick] (0.2,0.2) rectangle (\breitebox - 0.7, 1.8 - \reduceheight);
 \node[anchor = west] (b) at (0.2, 1.4 - \reduceheight) {\small \sometimes}; 
 \ifthenelse{\boolean{regularyes}}
 {
    \node[anchor = west, green!80!black] (a) at (\breitebox - 1.5, 0.6) {\huge\checkmark};
 }
 {
    \node[anchor = west, yellow!80!black] (a) at (4.2, 1.4) {\huge \bfseries ?};
    \draw[thick, rounded corners=4pt] (0.3,0.3) rectangle (5.5,0.9); %
    \node[anchor = west] (b) at (0.3, 0.6) {\small \yes};
    \node[anchor = west, green!80!black] (a) at (4.4, 0.6) {\huge\checkmark}; %
 }
  \ifthenelse{\boolean{alwaysno}}
  {
   \node[anchor = west, red!90!black] (a) at (\breitebox - 0.7, 2.2 - \reduceheight) {\Huge\bfseries  x};
  }
  {
   \node[anchor = west, yellow!80!black] (a) at (\breitebox - 0.7, 2.2 - \reduceheight) {\Huge\bfseries  ?};
  }
 \end{tikzpicture}
\\[3mm]
\setboolean{alwaysno}{false}
 \renewcommand{\all}{\Large Pl\"ucker vectors over $\CC$}
  \begin{tikzpicture}[line join=round, scale=1,
 vertex/.style={color=black}]
  %
%
%
 \draw[very thick, rounded corners=2pt] (0,0) rectangle (\breitebox,2.7 - \reduceheight); 
 \node[anchor = west, text width=\textboxwidth] (a) at (0, 2.2 - \reduceheight) {\all}; %
  %
%
 \draw[very thick] (0.2,0.2) rectangle (\breitebox - 0.7, 1.8 - \reduceheight);
 \node[anchor = west] (b) at (0.2, 1.4 - \reduceheight) {\small \sometimes}; 
 \ifthenelse{\boolean{regularyes}}
 {
    \node[anchor = west, green!80!black] (a) at (\breitebox - 1.5, 0.6) {\huge\checkmark};
 }
 {
    \node[anchor = west, yellow!80!black] (a) at (4.2, 1.4) {\huge \bfseries ?};
    \draw[thick, rounded corners=4pt] (0.3,0.3) rectangle (5.5,0.9); %
    \node[anchor = west] (b) at (0.3, 0.6) {\small \yes};
    \node[anchor = west, green!80!black] (a) at (4.4, 0.6) {\huge\checkmark}; %
 }
  \ifthenelse{\boolean{alwaysno}}
  {
   \node[anchor = west, red!90!black] (a) at (\breitebox - 0.7, 2.2 - \reduceheight) {\Huge\bfseries  x};
  }
  {
   \node[anchor = west, yellow!80!black] (a) at (\breitebox - 0.7, 2.2 - \reduceheight) {\Huge\bfseries  ?};
  }
 \end{tikzpicture}
%
\end{center}
 \caption{A summary of our results about when taking a $k$th power preserves being a representable arithmetic matroid / 
 a Pl\"ucker vector.
 \checkmark\  means yes, {\bfseries ?} means sometimes, and {\bfseries x} means never.}
 \label{Figure:Overview}
\end{figure}

\begin{Example}
\label{Example:SquaringOneDimCase}
  Let us consider the case $d=1$, \ie $ X = ( a_1, \ldots, a_N )$ with $a_i\in \K$.
  Then $ X_k = ( a_1^k, \ldots, a_N^k )$.
\end{Example}
 The method of taking the $k$th power of each entry does of course not generalize to higher dimensions. 
 However, for matrices that represent a regular matroid, a slightly more refined method works:
 below, we will see that one can write $ X = T A D $, with $ T\in \GL(d,\K)$, $A$ totally unimodular, and $ D \in \GL(N,\K) $ a diagonal matrix. 
 Then the matrix $X_k := T^k A D^k$ can be used.

\begin{Example}
 In Theorem~\ref{Proposition:RegularMatrices}\eqref{item:RegPlueckerNormal}, we really need  
 that the equalities hold ``up to sign''. Otherwise,
 the result is wrong.
 Let $k=2$ and let
 \begin{equation}
   X = \begin{pmatrix}
         1 & 0 & 1 & 0 \\
         0 & 1 & 1 & 1
       \end{pmatrix}.  
 \end{equation}
All maximal minors are equal to $1$, except for
$\Delta_{23}=-1$ and $\Delta_{24}=0$. So the Grassmann--Pl\"ucker relation can be simplified to 
\begin{equation}
\label{equation:GPexample}
 \Delta_{12}\Delta_{34} + \Delta_{14}\Delta_{23}=0. 
\end{equation}
 If we square the Pl\"ucker coordinates, they all become positive. But then
  \eqref{equation:GPexample} is no longer satisfied.
\end{Example}

  Theorem~\ref{Theorem:NonRegularMatricesNewNew} is in many ways best possible.
  For example, 
  over $\F_2$, for any integer $k\ge 0$, every Pl\"ucker  vector has a $k$th power as minors can only be  $0$ or $1$, so $X=X_k$ always holds.
    Similarly, over $\F_3$, minors can only be $0$,$1$, or $-1$, so every Pl\"ucker vector has a $k$th power up to sign for any integer $k\ge 0$. 
  We are also able to construct counterexamples over many  other fields.

 \begin{Proposition}
 \label{Proposition:NonRegularMatricesNewNewBestPossible}
     Theorem~\ref{Theorem:NonRegularMatricesNewNew} is in many ways best possible.
     Namely, it does not hold in positive characteristic or
     over algebraically closed fields. It is also wrong over $\R$ for negative real numbers $k$.

     More specifically, we are able to  construct counterexamples for any field of characteristic $p > 0$;
     over $\CC$ for any $k \in \Z_{\ge 2}$, and more generally,
     for algebraically closed fields of characteristic $p\ge 0$ and any $k \in \Z_{\ge 3}$, 
     if $p\neq 2$ and $p$ does not divide $k$; over $\R$ and any negative real number $k$. 
 \end{Proposition}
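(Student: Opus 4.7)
Here my plan is to exploit the Frobenius endomorphism: in characteristic $p$ the map $x \mapsto x^p$ is a ring homomorphism, so the entrywise $p$-th power $X^{(p)}$ of any matrix $X$ satisfies $\Delta_I(X^{(p)}) = \Delta_I(X)^p$ for every $I$. Taking $X$ to be any representation over $\overline{\F_p}$ of a non-regular matroid—say $U_{2,4}$ over $\F_q$ with $q \geq 4$, or the Fano matroid $F_7$ over $\F_2$—this gives a counterexample with $k = p$.

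\textbf{Negative $k$ over $\R$.} I would use $U_{2,4}$. The representation $X_b = \left(\begin{smallmatrix} 1 & 0 & -1 & -b \\ 0 & 1 & 1 & 1 \end{smallmatrix}\right)$ with $b > 1$ has Pl\"ucker vector $(1,1,1,1,b,b-1)$. Choosing $X_k := X_{b^k}$, the equality $|\Delta_I(X_k)| = |\Delta_I(X_b)|^k$ reduces to the single equation $1 - b^k = (b-1)^k$. For any $k < 0$ the intermediate value theorem supplies a solution $b \in (1,\infty)$, since the function $b \mapsto 1 - b^k - (b-1)^k$ tends to $-\infty$ as $b \to 1^+$ and to $1$ as $b \to \infty$.

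\textbf{Algebraically closed fields.} For $k \geq 4$ I would again use $U_{2,4}$. Setting $u = p_{12}p_{34}$ and $w = p_{14}p_{23}$, the Pl\"ucker relation forces $p_{13}p_{24} = u+w$, so the corresponding relation on the $k$-th powered coordinates reduces to $u w \cdot P_k(u,w) = 0$ with $P_k$ homogeneous of degree $k-2$ and coefficients the binomial numbers $\binom{k}{j}$. One checks by direct expansion that $P_k(0,w) = k w^{k-2}$ (so $u = 0$ is not a root under $p \nmid k$) and, for $k \geq 4$, that $P_k$ is not proportional to $(u+w)^{k-2}$. Hence under $p \neq 2$ and $p \nmid k$ the polynomial $P_k$ has a zero over the algebraic closure with $u, w, u + w$ all nonzero, producing a non-regular $U_{2,4}$-representation whose coordinatewise $k$-th power is a Pl\"ucker vector. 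For the small cases $k \in \{2,3\}$ one has $P_2 = 2$ and $P_3 = 3(u + w)$, so no non-degenerate solution exists in the $U_{2,4}$ stratum; here I would substitute a non-regular matroid of higher rank such as $U_{3,6}$, whose $4$-dimensional realization space over an algebraically closed field provides enough extra freedom to solve the enlarged system of Pl\"ucker relations on the $k$-th powered coordinates.

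\textbf{Main obstacle.} The delicate step is the small-$k$ subcase. One must produce an explicit representation of a non-regular higher-rank matroid whose coordinatewise $k$-th power still satisfies every Pl\"ucker relation, and then verify that the resulting matrix still represents a non-regular matroid (rather than degenerating to a stratum with more zero Pl\"ucker coordinates). Over an algebraically closed field the constraint variety should be non-empty for dimension reasons, but exhibiting an explicit solution and confirming that it lies in the intended non-regular stratum is the technical crux.
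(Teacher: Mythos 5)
Your positive-characteristic argument via the Frobenius endomorphism is correct and in fact cleaner than the paper's: the paper instead takes $a=2$ and $k=\lambda(p-1)+1$ and invokes Fermat's little theorem (plus a separate $k=2$ computation in characteristic $2$), whereas $\Delta_I(X^{(p)})=\Delta_I(X)^p$ holds identically because $x\mapsto x^p$ is a ring homomorphism. (Just make sure the non-regular matroid you power up is representable over the prime field of the given field: $F_7$ for $p=2$, $U_{2,4}$ for $p\ge 3$; your ``$\F_q$ with $q\ge4$'' is an unnecessary and slightly misleading restriction.) Your negative-$k$ argument over $\R$ is also correct and is essentially the paper's intermediate-value-theorem argument in a different parametrisation.

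The algebraically closed case, however, has a genuine gap. The theorem you are contradicting only requires $\Delta_I(X_k)=\pm\Delta_I(X)^k$ (equivalently, it constrains absolute values), so the powered coordinates need only satisfy \emph{some} signed Grassmann--Pl\"ucker relation $\pm u^k\pm(u+w)^k\pm w^k=0$, not just the trivial-sign one $u^k-(u+w)^k+w^k=0$ that you analyse. By ignoring the sign freedom you conclude, incorrectly, that no $U_{2,4}$ counterexample exists for $k\in\{2,3\}$, and you are then forced into the $U_{3,6}$ detour --- which you do not carry out and which you yourself identify as the unproved crux; a vague dimension count is not a proof that the enlarged system of powered Pl\"ucker relations has a solution in the correct stratum. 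The paper stays inside $U_{2,4}$ throughout: with $X(a)=\left(\begin{smallmatrix}1&0&1&1\\0&1&1&a\end{smallmatrix}\right)$ the whole problem reduces to $(a-1)^k=\pm(a^k-1)$, and choosing the \emph{minus} sign handles exactly the cases you lose. For $k=3$ over $\CC$ this is the cubic $2a^3-3a^2+3a-2=0$, with roots $a=\tfrac14(1\pm\sqrt{15}\,i)$; for $k=2$ over $\CC$ the paper exhibits an explicit matrix in equation~\eqref{equation:ComplexU24squaring}; and for general algebraically closed fields with $p\ne 2$, $p\nmid k$, $k\ge 3$, one checks that $f(a)=(a-1)^k+a^k-1$ has degree $k$ (leading coefficient $2\ne 0$) while $0$ and $1$ can only be simple roots (the formal derivative is $\pm k\ne 0$ there), so $f$ has a root outside $\{0,1\}$. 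To repair your write-up you would need to run your $P_k$ analysis for all four sign patterns, at which point the $U_{3,6}$ excursion becomes unnecessary; as it stands, the cases $k=2$ and $k=3$ of the proposition are not established.
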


 \begin{Remark}
   We have seen that given a \emph{real} Pl\"ucker vector $\xi = (\xi_I: I\in \binom{[N]}{d})$,  
    whether $\xi^k:=(\xi_I^k : I\in \binom{[N]}{d})$ is also contained in the Grassmannian depends only on the matroid defined by $\xi$.

    Over other fields, this is no longer the case.
    For example, in the proof of  Proposition~\ref{Proposition:NonRegularMatricesNewNewBestPossible} we will see 
    that there is a point $\xi$ in the $U_{2,4}$ stratum of  $\CC$ \st $\xi^2$ is again a Pl\"ucker vector.
    However, there are other points in the same stratum whose square is not a Pl\"ucker vector. 
    For example, using the notation of  \eqref{equation:CounterexampleMatrices}, the point corresponding to the matrix $X(-1)$.
 \end{Remark}

\begin{Remark}
  The operation of taking powers of  Pl\"ucker vectors corresponds to scaling  in tropical geometry \cite{maclagan-sturmfels-2015}, when working over a suitable field.
  For example, let $\K$  be the field of Puiseux series over some field $K$ that is equipped with a valuation that is trivial 
  on $K$.
  As usual, we choose a matrix $ X \in \K^{d\times N} $ and we consider the Pl\"ucker vector $ \Delta(X) \in \K^{\binom{[N]}{d}} $.
  After tropicalization, this corresponds to a 
  tropical linear space, or equivalently, a 
  regular subdivision of the hypersimplex into matroid polytopes (\eg \cite{speyer-2005}).    
  On the tropical side, the operation of taking the $k$th power corresponds to scaling 
  the tropical Pl\"ucker vector by the factor $k$.
  However, the situation in the tropical setting is different: there is always $X_k \in \K^{d\times N}$ \st 
  $\trop(\Delta(X_k)) = k \cdot \trop(\Delta(X))$.
\end{Remark}

 \subsection{Powers of arithmetic matroids}
 \label{Subsection:ArithmeticMatroids}
 
 In this subsection we will present results similar to the ones in the previous subsection in the setting of representable arithmetic matroids.
 Let $\Acal = (E, \rank, m)$ be an arithmetic matroid.
 For an integer $k\ge 0$, we will consider the arithmetic matroid
 $\Acal^k:= (E, \rank, m^k)$, where $m^k(e):= (m(e))^k$ for  $e\in E$.
 This is indeed an arithmetic matroid 
 (\cite[Corollary~5]{backman-lenz-2016}, \cite[Theorem~2]{delucchi-moci-2018}).
\begin{Theorem}[non-regular, arithmetic matroid]
\label{Theorem:NonRegularNonRepresentable}
 Let $\Acal = (E, \rank, m)$ be an arithmetic matroid.
 Suppose that $\Acal$ is representable and the matroid $(E, \rank)$ is non-regular.
 Let $k \neq 1$ be a non-negative integer.
 Then $\Acal^k:= (E, \rank, m^k)$ is not representable.
\end{Theorem}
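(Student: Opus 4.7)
My plan is to deduce Theorem~\ref{Theorem:NonRegularNonRepresentable} directly from Theorem~\ref{Theorem:NonRegularMatricesNewNew} by regarding integer matrices as real matrices. Suppose for contradiction that $\Acal^k = (E, \rank, m^k)$ is representable, so that there is an integer matrix $X_k \in \Z^{d \times N}$ (where $d = \rank(E)$ and $N = \abs{E}$) whose associated arithmetic matroid is $\Acal^k$. By hypothesis, $\Acal$ itself is representable, so there is also an integer matrix $X \in \Z^{d \times N}$ representing $\Acal$.

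Both $X$ and $X_k$ represent the underlying matroid $(E, \rank)$, so for any $B \in \binom{[N]}{d}$ one has $\Delta_B(X) = 0$ if and only if $B$ is a non-basis, and the analogous statement for $X_k$. On any basis $B$, the standard identification in the representable case gives $m(B) = \abs{\Delta_B(X)}$ and $m^k(B) = \abs{\Delta_B(X_k)}$. Therefore $\abs{\Delta_B(X_k)} = \abs{\Delta_B(X)}^k$ for every $B \in \binom{[N]}{d}$, where for non-bases this equality is interpreted using the convention $0^0 = 0$; this convention also handles the edge case $k = 0$, in which $X_k$ would have to be totally unimodular, already impossible since $(E, \rank)$ is non-regular.

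Since $\Z \subseteq \R$, I next regard $X$ and $X_k$ as elements of $\R^{d \times N}$. The matroid represented by $X$ over $\R$ is still the non-regular matroid $(E, \rank)$, the identity $\abs{\Delta_I(X_k)} = \abs{\Delta_I(X)}^k$ persists for every $I \in \binom{[N]}{d}$, and $\R$ is an ordered field. Applying Theorem~\ref{Theorem:NonRegularMatricesNewNew} with the non-negative integer exponent $k \neq 1$ yields the desired contradiction.

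There is no genuine obstacle once the framework is in place; the only thing to verify is the representability dictionary, in particular the equality $m(B) = \abs{\Delta_B(X)}$ on bases, which is built into the definition of a representable arithmetic matroid. If any care is required, it is merely in articulating the case $k = 0$ separately as a regularity statement, but this is absorbed cleanly by the $0^0 = 0$ convention already used in Theorem~\ref{Theorem:NonRegularMatricesNewNew}.
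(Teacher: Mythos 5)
Your strategy---deducing the arithmetic statement from Theorem~\ref{Theorem:NonRegularMatricesNewNew} over $\R$---is a legitimate and genuinely different route from the paper's. The paper does not pass through Theorem~\ref{Theorem:NonRegularMatricesNewNew} at all: it extracts a $U_{2,4}$ minor $(X/J)|_I$ via Corollary~\ref{Corollary:RegularExcludedMinors}, observes that the corresponding minor of $X_k$ represents $\Acal((X/J)|_I)^k$, and invokes part~(i) of Lemma~\ref{Lemma:U24}, which is stated directly for arithmetic matroids and does the torsion bookkeeping internally. Your version, by contrast, leans on the global Pl\"ucker statement, which is clean but shifts all the arithmetic-matroid-specific verification into your ``representability dictionary''---and that is exactly where there is a gap.

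The step ``$m(B)=\abs{\Delta_B(X)}$ on bases, which is built into the definition'' is only correct for representations in $\Z^d$. The paper's definition of representability (Subsection~\ref{Subsection:AriMatroids}) allows $X$ and $X_k$ to be lists in arbitrary finitely generated abelian groups $G$, $G'$, possibly with torsion and with free part of rank larger than $d$; then $m(B)=\abs{G_t}\cdot\abs{\Delta_B(\latproj{X})}$ (Lemma~\ref{Lemma:TorsionIndependent}), not $\abs{\Delta_B(X)}$. Since $m(\emptyset)$ need not be $1$, you cannot simply assume $X_k\in\Z^{d\times N}$; note that $\Acal^k$ having torsion $m^k(\emptyset)=m(\emptyset)^k$ is perfectly possible. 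The repair is short but must be made explicit: first reduce to ambient free rank $d$ by replacing $G$ with the saturation $G_E$ and choosing a basis of $G_E/G_t$; then use $\abs{G'_t}=m^k(\emptyset)=m(\emptyset)^k=\abs{G_t}^k$ to convert $m^k(B)=m(B)^k$ into $\abs{\Delta_B(\latproj{X_k})}=\abs{\Delta_B(\latproj{X})}^k$, after which Theorem~\ref{Theorem:NonRegularMatricesNewNew} applies to the integer matrices $\latproj{X},\latproj{X_k}$ viewed over $\R$. (Also, your aside that for $k=0$ the matrix ``would have to be totally unimodular'' is imprecise---only the maximal minors are forced into $\{0,\pm1\}$---but this is harmless since you apply Theorem~\ref{Theorem:NonRegularMatricesNewNew} uniformly anyway.) With the torsion reduction added, your proof is correct and arguably more economical than the paper's, at the cost of hiding the $U_{2,4}$ mechanism inside the cited theorem.
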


 Recall that an arithmetic matroid is \emph{torsion-free} if
 $m(\emptyset)=1$.
 In the representable case, this means that the arithmetic matroid can be represented by a list of vectors in a lattice, \ie a 
  finitely generated abelian group that is torsion-free.
 The torsion-free case is often simpler and a reader with little knowledge about arithmetic matroids is invited to consider only this case.

 Let $\Acal=(E,\rank, m)$ be a torsion-free arithmetic matroid.
 Let $I\subseteq E$ be an independent set. We say that $I$  is \emph{multiplicative} 
 if it  satisfies $m(I) =  \prod_{x\in I} m(\{ x \})$.
 This condition is always satisfied if $m(I)=1$.

\begin{Definition}
     We call a torsion-free arithmetic matroid \emph{weakly multiplicative}
     if it has at least one multiplicative basis.
     In general, we call an arithmetic matroid $\Acal$
     weakly multiplicative if there is a torsion-free arithmetic matroid $\Acal'$ 
     with a multiplicative basis \st $\Acal= \Acal'/Y$, where  $Y$ denotes a subset of the multiplicative basis. 

     We call a torsion-free arithmetic matroid \emph{strongly multiplicative} %
     if all its bases  are multiplicative.
     In general, we call an arithmetic matroid strongly multiplicative if it is  a quotient of a torsion-free arithmetic matroid whose bases are all multiplicative.
 \end{Definition}

 Let $\Acal= (E,\rank,m)$ be an arithmetic matroid.
 Let $Y$ be a set \st $ Y \cap E = \emptyset $. We call $\Acal'=(E\cup Y, \rank', m')$ a \emph{lifting} of $\Acal$ if 
 $\Acal' / Y = \Acal$.  \label{page:FirstDefinitionOfLifting}
 Similarly, if  $ X\subseteq \Z^d \oplus \Z_{q_1} \oplus \ldots \oplus \Z_{q_n} $ represents $\Acal$ 
 and $\lift(X)\subseteq \Z^{d+n}$ represents $\Acal'$, we call $\lift(X)$ a lifting   of $X$.
 If $\Acal'$ is a lifting of $\Acal$, then $\Acal$ is called a \emph{quotient} of $\Acal'$.
 The contraction operation $/$ is explained in Subsection~\ref{Subsection:RestrictionContraction}.

 Let $\Acal$ be an arithmetic matroid. If $\Acal$ is torsion-free, we call it regular  if it is representable and its underlying matroid is regular.
 In general, we call an arithmetic matroid  \emph{regular} if it has a lifting that is a torsion-free and regular arithmetic matroid.
 Note that this condition is slightly stronger than having an underlying matroid that is regular\footnote{For an example, consider the arithmetic matroid
 defined by the list $X=((1,\bar 0),(0,\bar 1), (1, \bar 1), (-1, \bar 1))\subseteq \Z \oplus \Z_2$.
 The underlying matroid is $U_{1,4}$, which is regular, but all of its liftings contain a $U_{2,4}$.}. 
 Below, we will consider arithmetic matroids that are \emph{regular and weakly/strongly multiplicative}.
 If there is torsion, we assume that for such arithmetic matroids, there is \emph{one} torsion-free lifting that is \emph{both} 
 regular and satisfies the multiplicativity condition.

 We will see that regular and strongly/weakly multiplicative arithmetic matroids have representations with special properties
 that will allow us to prove the following theorem.

\begin{Theorem}[regular, arithmetic matroid]
 \label{Theorem:PowerOfArithmeticMatroid}
  Let $\Acal$ be an arithmetic matroid that is regular  and weakly multiplicative.
  Let $k\ge 0$ be an integer. 
  Then $\Acal^k$ is representable.
\end{Theorem}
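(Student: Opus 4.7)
The plan is to reduce to the torsion-free case and then construct an explicit representation of $\Acal^k$.  By the definitions of regular and weakly multiplicative for arithmetic matroids with torsion, there is a torsion-free lifting $\Acal' = (E \cup Y, \rank', m')$ of $\Acal$ that is simultaneously regular and weakly multiplicative.  Contraction commutes with taking $k$th powers of the multiplicity function, in the sense that $(\Acal')^k / Y = \Acal^k$, so producing an integer representation of $(\Acal')^k$ and then contracting along $Y$ yields a representation of $\Acal^k$.  It therefore suffices to treat the torsion-free case.

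Assume from now on that $\Acal$ is torsion-free, regular and weakly multiplicative, with ground set $E$ of size $N$, underlying rank $d$, represented by a full-rank matrix $X \in \Z^{d \times N}$ and admitting a multiplicative basis $B \subseteq E$.  The central structural step is to establish a factorisation
\[
 X \;=\; T \cdot A \cdot D,
\]
where $A \in \Z^{d \times N}$ is a totally unimodular representative of the underlying regular matroid, $D$ is the diagonal matrix with integer entries $m(\{e\})$ for $e \in E$, and $T \in \GL(d, \Z)$.  To produce it, choose any totally unimodular representative of the underlying matroid and left-multiply so that its restriction to $B$ becomes the identity; set $T := X_B D_B^{-1}$.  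Every column $x_e$ is $m(\{e\})$ times a primitive integer vector, because in the torsion-free representable setting $m(\{e\})$ equals the gcd of the coordinates of $x_e$; hence $T$ is integral, and $\abs{\det(T)} = m(B)/\prod_{e \in B} m(\{e\}) = 1$ by multiplicativity of $B$.  The remaining step is to verify that $T^{-1} X = A D$ column by column; this uses the rigidity of totally unimodular representations of a regular matroid in which a prescribed basis has been trivialised, together with the fact that each off-basis column is determined up to sign by the fundamental circuit of $e$ with respect to $B$.

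Given the factorisation, define $X_k := T^k \cdot A \cdot D^k$.  Since $\det(T) = \pm 1$ one has $T^k \in \GL(d, \Z)$, and $D^k$ is integer diagonal, so $X_k \in \Z^{d \times N}$.  By multilinearity,
\[
 \Delta_I(X_k) \;=\; \det(T)^k \cdot \Delta_I(A) \cdot \prod_{e \in I} m(\{e\})^k,
\]
and total unimodularity of $A$ gives $\Delta_I(A) \in \{-1, 0, 1\}$, hence $\abs{\Delta_I(X_k)} = \abs{\Delta_I(X)}^k = m(I)^k$ for every $I \in \binom{[N]}{d}$.  The multiplicity on a general subset $S$ in the torsion-free representable setting is determined by gcds of maximum-rank minors of $X_S$, and the $T A D$ structure transports these gcds cleanly under $k$th powers, so $X_k$ realises the full multiplicity function $m^k$ on $2^E$.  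Contracting along $Y$ then produces the required representation of $\Acal^k$.

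The main obstacle is the factorisation step.  Constructing $T$ and proving that it is integral uses only the single multiplicative basis $B$, but showing that $T^{-1} X$ agrees with $A D$ on \emph{every} column is a genuine structural assertion: it requires both regularity, which supplies the globally coherent signed structure coming from total unimodularity, and weak multiplicativity, which pins down the scaling $T$ consistently with the singleton multiplicities.  Once this is in hand, the definition of $X_k$ is forced, the minor computation is a one-line Cauchy--Binet calculation, and the reduction from $\Acal$ to its torsion-free lifting $\Acal'$ is just the definition of contraction of arithmetic matroids.
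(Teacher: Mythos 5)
There is a genuine gap, and it sits exactly where you flagged ``the main obstacle'': the factorisation $X = TAD$ with $T \in \GL(d,\Z)$, $A$ totally unimodular, and $D = \diag(m(\{e\}))_{e\in E}$ does not exist in general. If it did, then $X$ and $AD$ would represent the same arithmetic matroid (unimodular transformations from the left preserve it), so $\Acal$ would be representable by a scaled unimodular list and hence \emph{strongly} multiplicative -- but the theorem only assumes \emph{weak} multiplicativity. Example~\ref{Example:GraphicNotLabelled} of the paper kills your construction concretely: for
$X = \bigl(\begin{smallmatrix} 1 & 0 & 1 \\ 0 & 3 & -2 \end{smallmatrix}\bigr)$
the basis $B=\{1,2\}$ is multiplicative, your recipe gives $T = X_B D_B^{-1} = I_2$, and then $A = T^{-1}XD^{-1} = \bigl(\begin{smallmatrix} 1 & 0 & 1 \\ 0 & 1 & -2 \end{smallmatrix}\bigr)$, which is not totally unimodular. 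Your minor computation betrays the same problem: $\abs{\Delta_I(X_k)} = \prod_{e\in I} m(\{e\})^k$ together with $\abs{\Delta_I(X)}^k = m(I)^k$ forces $m(I) = \prod_{e\in I} m(\{e\})$ for \emph{every} basis $I$, which is strong multiplicativity. (The torsion reduction at the start and the observation that contraction commutes with taking powers are fine.)

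The paper's decomposition (Proposition~\ref{Proposition:RegularDoublyScaledUnimodular}) is structurally different in a way that matters: $T$ is a \emph{diagonal} matrix with entries in $\Q$ (not a unimodular integer matrix), $D$ is diagonal with entries in $\Q$ (not the singleton multiplicities), and $A$ is totally unimodular with its $B$-columns forming an identity matrix. Weak multiplicativity enters through Lemma~\ref{Lemma:DiagonalMatrixMultiplicativeBasis}: the Hermite normal form of $X$ with respect to the multiplicative basis $B$ is diagonal on $B$, and comparing with $X_B = TD_1$ forces $T$ to be diagonal. Diagonality of both $T$ and $D$ is then what makes $X_k := T^kAD^k$ integral even though $T^k$ and $D^k$ individually are not -- entrywise $(T^kAD^k)_{ij} = t_i^k a_{ij}\delta_j^k = \pm x_{ij}^k$ when $a_{ij}=\pm 1$ -- and what makes every nonzero minor of $X$ a product $\pm\prod_{i\in I}t_i\prod_{j\in J}\delta_j$ (Lemma~\ref{Lemma:MinorWeaklyMultiplicative}), so that gcds of minors, and hence all multiplicities, get raised to the $k$th power. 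In the example above the correct data are $T=\diag(1,-2)$ and $D=\diag(1,-\tfrac32,1)$, neither of which your ansatz allows. So the overall strategy (decompose, take $k$th powers of the scaling factors, contract) is the right one, but the specific factorisation you propose is false, and repairing it essentially amounts to proving Proposition~\ref{Proposition:RegularDoublyScaledUnimodular}.
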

  The proof of Theorem~\ref{Theorem:PowerOfArithmeticMatroid} is constructive:
  we will see below that we can write $X$ as a quotient of a doubly scaled unimodular list, \ie we can write
  $X= (T \cdot A \cdot D) / Y$, where $T$, $A$, $D$, are matrices with special properties and $Y$ is a special sublist of $TAD$. 
  We will see that $\Acal^k$ is represented by the list $X_k:= (T^k \cdot A \cdot D^k) / Y_k$, where $Y_k$ is obtained from $Y$ by multiplying
  each entry by $k$.

\smallskip
 We say that a list $X\subseteq \Z^d$ is 
 a \emph{scaled unimodular list} 
 if  there is a matrix $ A \in \Z^{d\times N} $ that is totally unimodular and  a non-singular diagonal matrix $ D \in \Z^{N\times N} $  
 \st  $X = A D$.
 A list $ X \subseteq \Z^{d} \oplus \Z_{q_1} \oplus \ldots \Z_{q_n} $  is a \emph{quotient of a scaled unimodular list}
 (QSUL)
 if there is a scaled unimodular list $\lift(X) \in \Z^{ (d + n) \times (N + n)}$ and a sublist $Y\subseteq \lift(X)$ \st
 $ X = \lift(X) / Y$.
 The quotient $\lift(X) / Y$ denotes the image of $ \lift(X) \setminus Y $ in $ \Z^{d+n} / \langle Y \rangle$ under the canonical projection. 
 $\langle Y\rangle\subseteq \Z^{d+n}$ denotes the subgroup generated by $Y$.

  We say that a list $X\subseteq \Z^{d}$ is a \emph{doubly scaled unimodular list} 
 if there is a matrix $A \in \Z^{d\times N}$ that is totally unimodular and (after reordering its columns) its first $d$ columns form an identity matrix and
 non-singular diagonal matrices $D\in \Q^{N\times N}$, $T\in \Q^{d\times d}$  
   \st  $ X = T A D $. 
 A list $X\subseteq \Z^{d} \oplus \Z_{q_1} \oplus \ldots \Z_{q_n}  $ is a \emph{quotient of a doubly scaled unimodular list} (QDSUL)
 if there is a doubly scaled unimodular list
 $\lift(X) \subseteq \Z^{(d + n)\times (N+n)}$    \st  $ X = \lift(X) / Y$, where 
 $Y\subseteq X$ denotes the set of columns  $d+1,\ldots, d+n$. By definition, these columns are unit vectors in the matrix $A$ that corresponds to the 
 doubly scaled unimodular list $\lift(X)$. 
 In other words, we can write
   \begin{equation}
   \label{eq:ShapeQDSUL}
 \lift(X) = T\cdot\,
\begin{blockarray}{rrr} %
  & Y &   \\
\begin{block}{(c|c|c)}
 I_d  &  0  & *    \\ \BAhline
 0  &  I_n  & *   \\
\end{block}
\end{blockarray}\,\cdot D,
\end{equation}
   where $I_j$ denotes a ($j\times j)$-identity matrix and $*$ denotes arbitrary matrices of  suitable dimensions.
  Note that while a QSUL is not always a QDSUL (the latter has the identity matrix requirement),
  every QSUL can be transformed into a QDSUL by a unimodular transformation.
  Unimodular transformations preserve   the arithmetic matroid structure.

\begin{Proposition}[strong/QSUL]%
\label{Proposition:RegularScaledUnimodular}
  Let $\Acal$ be a regular arithmetic matroid.
  Then the  following assertions are equivalent:
\begin{enumerate}[(i)]
 \item $\Acal$ is strongly multiplicative. If $\Acal$ is torsion-free, this means that all bases of $\Acal$ are multiplicative.
 If $\Acal$ has torsion, this means that it is the quotient of a torsion-free arithmetic matroid, all of whose bases are  multiplicative.
 \item $\Acal$ can be represented by a quotient of a scaled unimodular list (QSUL).
 \end{enumerate}
\end{Proposition}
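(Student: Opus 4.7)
The plan is to prove the two directions separately, doing the torsion-free case first and reducing the general case by lifting.

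\emph{Direction $(ii)\Rightarrow(i)$.} In the torsion-free case, $\Acal$ is represented by $X=AD$ with $A$ totally unimodular and $D=\diag(d_1,\ldots,d_N)$ non-singular. Each column of $A$ has entries in $\{-1,0,1\}$ and is non-zero, hence has $\gcd=1$, so $m(\{i\})=|d_i|$. For any basis $B$ of the underlying matroid, total unimodularity gives $|\det(A_B)|=1$, whence
\begin{equation*}
m(B)=|\det(X_B)|=|\det(A_B)|\prod_{i\in B}|d_i|=\prod_{i\in B}m(\{i\}),
\end{equation*}
so every basis is multiplicative. In the general case, I would apply this to the torsion-free lifting $\lift(X)=AD$ supplied by the QSUL structure, which then exhibits $\Acal$ as a quotient of a strongly multiplicative arithmetic matroid.

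\emph{Direction $(i)\Rightarrow(ii)$.} In the torsion-free case, regularity of the underlying matroid $M$ yields a totally unimodular representation $A\in\Z^{d\times N}$ of $M$. I would set $D:=\diag(m(\{1\}),\ldots,m(\{N\}))$ and $X:=AD$; this is a scaled unimodular list by construction. It remains to verify that $X$ represents $\Acal$. Agreement of the underlying matroid is immediate since scaling columns by non-zero integers preserves the matroid. Agreement of $m$ on singletons follows from coprimality of the columns of $A$, and on bases from strong multiplicativity combined with the computation in the previous paragraph. For a general subset $S$ one compares $m(S)$ with the lattice index $[\langle X_S\rangle_{\mathrm{sat}}:\langle X_S\rangle]$ using the factorization $X_S=A_S D_S$ and the fact that a submatrix of a TU matrix sends a $\Z$-basis of the saturation of its column span to itself; this reduces $m(S)$ to quantities already matched. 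Alternatively, since strong multiplicativity implies weak multiplicativity, one can invoke the uniqueness-of-representation result for weakly multiplicative arithmetic matroids cited in the introduction.

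For $\Acal$ with torsion, the hypothesis supplies a torsion-free lifting $\Acal'$ that is simultaneously regular and strongly multiplicative; apply the torsion-free case to $\Acal'$ to obtain a scaled unimodular list $\lift(X)$ representing $\Acal'$, and then contract by the lifting set $Y$ to obtain a QSUL representing $\Acal$. The main obstacle I anticipate lies in the verification step of $(i)\Rightarrow(ii)$: strong multiplicativity only constrains $m$ on bases, so matching $m$ on dependent sets and on non-basis independent sets demands either the lattice-theoretic calculation sketched above or an appeal to uniqueness of weakly multiplicative representations. Care must also be taken with loops, which the QSUL definition excludes (since $D$ is non-singular), so any loops of $\Acal$ have to be accounted for through the lifting set $Y$ rather than as columns of the TU matrix.
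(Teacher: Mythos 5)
Your direction $(ii)\Rightarrow(i)$ is essentially the paper's argument (it is the content of Lemma~\ref{Lemma:MultiplicityScaledUnimodular}), and your handling of torsion by passing to the lifting matches the paper as well. Your direction $(i)\Rightarrow(ii)$, however, takes a genuinely different route, and as written it has a gap. The paper starts from an \emph{existing} representation $X$ of $\Acal$, writes $X=TAD$ with $T$ diagonal via Corollary~\ref{Corollary:BLrepresentationOfRegularMatroid} and Lemma~\ref{Lemma:DiagonalMatrixMultiplicativeBasis}, and then uses strong multiplicativity to show the factor $T$ can be absorbed into $D$; the representing list itself never changes, only its factorization, so there is nothing to re-verify about the multiplicity function. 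You instead build a \emph{new} candidate list $X:=AD$ with $D=\diag(m(\{1\}),\ldots,m(\{N\}))$ and must then prove it represents $\Acal$. That verification is where the real work sits, and your sketch does not carry it out: the ``lattice-theoretic calculation'' you describe computes the multiplicity function of the candidate $AD$ (which is easy: for independent $S$ it equals $\prod_{j\in S}m(\{j\})$ by total unimodularity), but it says nothing about the value of $m_{\Acal}(S)$ itself on a non-basis independent set. Strong multiplicativity only tells you $m_{\Acal}(B)=\prod_{b\in B}m(\{b\})$ for bases $B$, so you still owe an argument that $m_{\Acal}(S)=\prod_{j\in S}m(\{j\})$ for \emph{every} independent $S$. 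Your fallback, the uniqueness theorem for weakly multiplicative arithmetic matroids, does not help here either: that theorem compares two representations of the \emph{same} arithmetic matroid, whereas what you need is precisely that $AD$ and the given representation of $\Acal$ define the same arithmetic matroid — invoking it is circular.

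The gap is fillable with tools already in the paper. Given an independent set $S$, extend it to a basis $B$; since $B$ is multiplicative, Lemma~\ref{Lemma:DiagonalMatrixMultiplicativeBasis} lets you bring a representation of $\Acal$ into Hermite normal form in which the columns of $B$ form the diagonal matrix $\diag(m(\{b_1\}),\ldots,m(\{b_d\}))$. The columns indexed by $S$ are then scalar multiples of distinct unit vectors, so by Lemma~\ref{Lemma:MultiplicityFreeGroup}(\ref{Lemma:MultiplicityFreeGroupIndependent}) the only non-zero maximal minor of $X|_S$ is $\pm\prod_{j\in S}m(\{j\})$, whence $m_{\Acal}(S)=\prod_{j\in S}m(\{j\})=m_{AD}(S)$. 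Dependent sets then match by Lemma~\ref{Lemma:MultiplicityFreeGroup}(ii), since both multiplicity functions reduce to gcds over maximal independent subsets. With this inserted, your construction goes through and is arguably more direct than the paper's absorb-the-$T$ argument; but as submitted, the decisive step is asserted rather than proved.
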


\begin{Proposition}[weak/QDSUL]
\label{Proposition:RegularDoublyScaledUnimodular}
  Let $\Acal$ be a regular arithmetic matroid.
  Then the  following assertions are equivalent:
\begin{enumerate}[(i)]
 \item $\Acal$ is weakly multiplicative.
\item $\Acal$ can be represented by a quotient of a doubly scaled unimodular list (QDSUL).
\end{enumerate}
\end{Proposition}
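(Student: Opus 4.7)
The proof naturally splits into the two implications. The reverse direction is essentially a direct computation once the structure of \eqref{eq:ShapeQDSUL} is unpacked; the forward direction carries the real content and runs parallel to the preceding Proposition~\ref{Proposition:RegularScaledUnimodular} on QSULs.

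For the direction $(ii) \Rightarrow (i)$, suppose $\Acal = \lift(X)/Y$ is represented by a QDSUL with $\lift(X) = T A D$ of the form \eqref{eq:ShapeQDSUL}. Since $A$ is totally unimodular, the matroids underlying $\lift(X)$ and (by contraction) $\Acal$ are regular; together with the torsion-free lifting $\Acal(\lift(X))$, this shows $\Acal$ is regular. The first $d+n$ columns of $\lift(X)$, which contain $Y$, have the form $t_i d_i e_i$, so they form a basis $B'$ of the torsion-free arithmetic matroid of $\lift(X)$. A direct computation gives $m(B') = |\det B'| = \prod_{i=1}^{d+n} |t_i d_i|$, while $m(\{t_i d_i e_i\}) = |t_i d_i|$ (the content of that column), hence $m(B') = \prod_i m(\{x_i\})$ and $B'$ is multiplicative. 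Since $Y \subseteq B'$, the arithmetic matroid $\Acal$ is weakly multiplicative.

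For the direction $(i) \Rightarrow (ii)$, suppose $\Acal$ is regular and weakly multiplicative. By the convention fixed in the paper, there is a torsion-free lifting $\Acal'$ of $\Acal$ that is both regular and has a multiplicative basis $B' \supseteq Y$. Choose a representation $\lift(X) \in \Z^{(d+n)\times(N+n)}$ of $\Acal'$ with $B'$ as the first $d+n$ columns and $Y$ as columns $d+1,\ldots,d+n$. Write each basis column as $x_i = m_i y_i$ with $y_i$ primitive and $m_i = m(\{x_i\})$. The multiplicativity condition $m(B') = \prod m_i$, combined with the identity $m(B') = |\det B'| = (\prod m_i) \cdot |\det [y_1|\cdots|y_{d+n}]|$, forces $|\det [y_1|\cdots|y_{d+n}]| = 1$, so $U := [y_1|\cdots|y_{d+n}] \in \GL(d+n,\Z)$. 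Replacing $\lift(X)$ by $\tilde X := U^{-1}\lift(X)$, a unimodular change of coordinates that preserves the arithmetic matroid, normalises the first $d+n$ columns to $\diag(m_1,\ldots,m_{d+n})$.

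It remains to produce the factorisation $\tilde X = T A_0 D$. Since the underlying matroid of $\tilde X$ is regular, there is a totally unimodular representation $A_0 \in \Z^{(d+n)\times(N+n)}$, which after pivoting (an operation preserving total unimodularity) may be assumed to have its first $d+n$ columns equal to $I_{d+n}$. Invoking the uniqueness of representations of regular matroids over $\Q$, we obtain $\tilde X = T A_0 D$ for some $T \in \GL(d+n,\Q)$ and diagonal $D$. Comparing the first $d+n$ columns gives $\diag(m_1,\ldots,m_{d+n}) = T \cdot \diag(d_1,\ldots,d_{d+n})$, which forces $T$ itself to be diagonal. Hence $\tilde X$ is a doubly scaled unimodular list, and $\tilde X / Y = \Acal$ is represented by a QDSUL. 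The main obstacle is invoking the uniqueness of regular matroid representations over $\Q$ in precisely the right form and leveraging the normalised basis to force $T$ to be diagonal; this is the analogue, for a single multiplicative basis, of the simpler reasoning used in Proposition~\ref{Proposition:RegularScaledUnimodular}, where the hypothesis that all bases are multiplicative makes the extra row-scaling $T$ unnecessary.
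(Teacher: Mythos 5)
Your proof is correct and follows essentially the same route as the paper's: normalise the multiplicative basis of a torsion-free lifting to a diagonal block, apply the uniqueness of representations of regular matroids to write the lifting as $T A_0 D$ with $A_0$ totally unimodular beginning with an identity block, and compare the first $d+n$ columns to force $T$ to be diagonal, while the converse direction reads off a multiplicative basis from the diagonal block of \eqref{eq:ShapeQDSUL}. The only deviation is that you replace the paper's Hermite-normal-form argument (Lemma~\ref{Lemma:DiagonalMatrixMultiplicativeBasis}) for diagonalising the multiplicative basis by a direct primitive-vector/determinant computation yielding the unimodular change of coordinates $U$, which is an equally valid way to carry out that normalisation.
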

 \begin{Remark}
     If we do not assume that the totally unimodular list in the definition of a QDSUL
     starts with an identity matrix, 
     Proposition~\ref{Proposition:RegularDoublyScaledUnimodular}  is false:
     The matrix $X_1$ below is a doubly scaled unimodular list, but it does not have a multiplicative basis.
     It is also important that the matrices $T$ and $D$ may have entries in $\Q\setminus \Z$. The matrix $X_2$ below represents an arithmetic matroid that is weakly multiplicative,
     but it cannot be obtained by scaling the rows and columns of a totally unimodular matrix by integers.
 \begin{equation}
  X_1 = \begin{pmatrix} 1 & 1 \\ 0 & 2 \end{pmatrix}
 \qquad
  X_2 = \begin{pmatrix}
          1 & 0 & 1 \\ 0 & 1 & 2
        \end{pmatrix}
 \end{equation}
\end{Remark}
 \begin{Remark}
 QSULs
 have appeared in the literature before.
 They have certain nice properties concerning combinatorial interpretations of their arithmetic Tutte polynomials \cite[Remark~13]{backman-lenz-2016}.

 In \cite{lenz-unique-2019}, the author showed that weakly multiplicative and torsion-free arithmetic matroids have 
 a unique representation (up to a unimodular transformation from the left and multiplying the columns by $-1$).
 Callegaro--Delucchi pointed out that this implies that the
 integer cohomology algebra of the corresponding centred toric arrangement is determined combinatorially, \ie by the arithmetic matroid \cite{callegaro-delucchi-2017}.
\end{Remark}

  The advantage of regular and strongly multiplicative arithmetic matroids over
  regular and weakly multiplicative arithmetic matroids 
  is that 
  their multiplicity function can be calculated very easily
  (Lemma~\ref{Lemma:MultiplicityRegularMultiplicative}).
  In Subsection~\ref{Subsection:LabelledGraphsResults} we will see that strongly multiplicative arithmetic matroids  arise naturally from 
  labelled graphs.
\begin{Question}
 There is a gap between 
 Theorem~\ref{Theorem:NonRegularNonRepresentable} and Theorem~\ref{Theorem:PowerOfArithmeticMatroid}.
 In particular, the case of regular arithmetic matroids that are not multiplicative is not covered.
 Example~\ref{Example:K3squareablenotmultiplicative} and
 Example~\ref{Example:NonSquareableC4}
 show
 that 
 Theorem~\ref{Theorem:PowerOfArithmeticMatroid} is not optimal.
 Is it possible to make a more precise statement about when 
 taking a power of the multiplicity function of an arithmetic matroid whose underlying matroid is regular preserves
 representability?
\end{Question}

 From our results, one can easily deduce the following new characterisation of regular matroids.
 This characterisation is mainly of theoretical interest as it does not yield an obvious
 algorithm to check regularity.
 In addition, a polynomial time algorithm to test regularity of a matroid that is given by an 
 independence oracle is already known \cite{truemper-1982}.

\begin{Corollary}
 \label{Corollary:RegularCharacterisation}
 Let $M$ be a matroid of rank $d$ on $N$ elements.
 Then the following assertions are equivalent:
 \begin{enumerate}[(i)]
  \item $M$ is a regular matroid.
  \item There is an ordered field $\K$ and a non-negative integer $k\neq 1$ \st
  there are matrices $X$ and $X_k$ with entries in $\K$ that represent $M$ and $ \abs{\Delta_I(X)}^k = \abs{\Delta_I(X_k)}$ for all $I\in \binom{[N]}{d}$.
  \item For every ordered field $\K$ and every non-negative integer $k\neq 1$ 
  and every representation $X$ over $\K$, there is a representation $X_k$ over $\K$
  \st $ \abs{\Delta_I(X)}^k = \abs{\Delta_I(X_k)}$ for all $I\in \binom{[N]}{d}$.
  \item $M$ has a representation over $\Z$ with corresponding arithmetic matroid $\Acal$, \st
  the arithmetic matroid $\Acal^k$ is representable for any non-negative integer $k\ge 0$.
 \end{enumerate}
\end{Corollary}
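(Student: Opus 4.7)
The plan is to close the cycle (i) $\Rightarrow$ (iii) $\Rightarrow$ (ii) $\Rightarrow$ (i), and then to establish (i) $\Leftrightarrow$ (iv) separately, by direct appeal to the four main theorems already stated in the paper.

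For (i) $\Rightarrow$ (iii) I would observe that a regular matroid is representable over every field, hence over every ordered field $\K$; given any such representation $X \in \K^{d\times N}$ and any non-negative integer $k \neq 1$, Theorem~\ref{Proposition:RegularMatrices}\eqref{enumerate:RegPlueckerOrderedField} directly produces the required $X_k$. The implication (iii) $\Rightarrow$ (ii) is then essentially trivial: since in the only interesting case $M$ is representable over some ordered field (for instance $\Q$), pick one such representation $X$ and one $k \neq 1$, and apply the universal statement in (iii). Finally, (ii) $\Rightarrow$ (i) is exactly the contrapositive of Theorem~\ref{Theorem:NonRegularMatricesNewNew}: if $M$ were non-regular, the hypothetical $X_k$ in (ii) could not exist over any ordered field.

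For (i) $\Rightarrow$ (iv), I would take a totally unimodular integer representation $A \in \Z^{d\times N}$ of $M$, which exists because $M$ is regular. All maximal minors of $A$ lie in $\{0, \pm 1\}$, and moreover every independent subset of columns of $A$ spans a saturated sublattice of $\Z^d$, so the multiplicity function $m$ of the associated arithmetic matroid $\Acal$ is identically $1$ on independent sets. Every basis is therefore multiplicative, which means $\Acal$ is a \emph{strongly} multiplicative regular arithmetic matroid; Theorem~\ref{Theorem:PowerOfArithmeticMatroid} then guarantees that $\Acal^k$ is representable for every integer $k \geq 0$. Conversely, (iv) $\Rightarrow$ (i) is the contrapositive of Theorem~\ref{Theorem:NonRegularNonRepresentable}: if $M$ were non-regular, then any integer representation $\Acal$ has non-regular underlying matroid, and for any $k \neq 1$ the arithmetic matroid $\Acal^k$ would fail to be representable, contradicting the hypothesis.

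The substance of the corollary is thus packaged entirely inside the two non-representability theorems (\ref{Theorem:NonRegularMatricesNewNew} and \ref{Theorem:NonRegularNonRepresentable}) together with their positive counterparts (\ref{Proposition:RegularMatrices} and \ref{Theorem:PowerOfArithmeticMatroid}); no new combinatorial or algebraic input is required. The only step that is not purely formal is the observation that a totally unimodular representation automatically yields a strongly (hence weakly) multiplicative arithmetic matroid, which is what unlocks Theorem~\ref{Theorem:PowerOfArithmeticMatroid} in the direction (i) $\Rightarrow$ (iv). I expect this single saturation/multiplicativity remark to be the only place where one has to do more than quote a previous result.
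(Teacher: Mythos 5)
Your proposal is correct and follows essentially the same route as the paper, which likewise dispatches the equivalences by citing Theorems~\ref{Theorem:NonRegularMatricesNewNew}, \ref{Proposition:RegularMatrices}, and \ref{Theorem:NonRegularNonRepresentable} and handles $(i)\Rightarrow(iv)$ by choosing a totally unimodular representation. The only remark worth adding is that your detour through Theorem~\ref{Theorem:PowerOfArithmeticMatroid} in that last step, while valid, is not needed: a totally unimodular representation gives $m\equiv 1$, hence $m^k\equiv 1$ and $\Acal^k=\Acal$ is represented by the same matrix.
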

 Note that $(i)\Rightarrow(iv)$ follows by choosing a  totally unimodular matrix as a representation. The rest follows directly from
 Theorem~\ref{Theorem:NonRegularMatricesNewNew}, Theorem~\ref{Proposition:RegularMatrices}, and Theorem~\ref{Theorem:NonRegularNonRepresentable}.

\subsection{Two different multiplicity functions}
\label{Subsection:TwoDifferentMultiplicityFunctions}

 Delucchi--Moci \cite{delucchi-moci-2018}, as well as Backman and the author 
 \cite{backman-lenz-2016} 
 showed that if
 $\Acal_1=(E, \rank,m_1)$
 and
 $\Acal_2=(E, \rank,m_2)$ are arithmetic matroids with the same underlying matroid, then 
 $\Acal_{12} :=(E, \rank,m_1 m_2)$ is also an arithmetic matroid.
 It is a natural question to ask if representability of  $\Acal_1$ and $\Acal_2$ implies representability of $\Acal_{12}$.
 Can our previous results be generalized to this setting? 
 For 
 Theorem~\ref{Theorem:NonRegularMatricesNewNew}
 and Theorem~\ref{Theorem:NonRegularNonRepresentable}, this is false in general (Example~\ref{Example:NoTwoFunctionsNonRegular}).
 For Theorem~\ref{Theorem:PowerOfArithmeticMatroid}, it is false as well (Example~\ref{Example:ArithmeticMatroidsNoTwoFunctions}).
  On the other hand, the constructive result for Pl\"ucker vectors (Theorem~\ref{Proposition:RegularMatrices}) can 
  be generalised to the setting of two different Pl\"ucker vectors that define
 the same matroid  (Theorem~\ref{Proposition:MatricesTwoMultiplicities}).
 
\begin{Example}
\label{Example:NoTwoFunctionsNonRegular}
Let
\begin{align}
X_1 &=
\left(\begin{array}{rrrr}
 1 & 0 & -2 & -2 \\
 0 & 1 & 1  & -1 
\end{array}\right),
&
X_2 &=
\left(\begin{array}{rrrr}
 1 & 0 & -1 & -1 \\
 0 & 1 &  3 &  1 
\end{array}\right)
\displaybreak[2]
\\
\text{and }
 X_{12} &=
\left(\begin{array}{rrrr}
 1 & 0 & -2 & -2 \\
 0 & 1 &  3 & -1 
\end{array}\right).
\end{align}
All three matrices have the same non-regular underlying matroid: $U_{2,4}$.
The maximal minors of $X_{12}$ are $\Delta_{12}(X_{12}) = 1=1\cdot 1 = \Delta_{12}(X_1)\Delta_{12}(X_2)$, $\Delta_{13}=3=1\cdot 3$, $\Delta_{14}=-1=(-1)\cdot 1$, $\Delta_{23}=2= 2\cdot 1$,
$\Delta_{24}=2=2\cdot 1$, and $\Delta_{34}=8=4\cdot 2$. 
Thus, the Pl\"ucker vector of $X_{12}$ is the element-wise product of the Pl\"ucker vectors of $X_1$ and $X_2$.
The columns of each of the three matrices are primitive vectors, hence the multiplicity functions of the arithmetic matroids assumes the value $1$
on each singleton. This implies that $X_{12}$ represents the arithmetic matroid $\Acal_{12}=(E, \rank, m_1 m_2)$, where $m_1$ and $m_2$ denote the multiplicity functions
induced by $X_1$ and $X_2$, respectively.
\end{Example}

\begin{Example}
\label{Example:ArithmeticMatroidsNoTwoFunctions}
 Let  $X_1 = (2, 3)$ and $X_2 = (3,2) $ and let $\Acal_1=(\{x_1,x_2\}, \rank, m_1)$ and $\Acal_2=(\{x_1,x_2\}, \rank, m_2)$ denote the corresponding
 arithmetic matroids. 
 Both $\Acal_1$ and $\Acal_2$ are strongly multiplicative.
 Of course,
 \begin{equation}
 m_1(\{x_1,x_2\})= m_2(\{x_1,x_2\})=\gcd(2,3)=1.
 \end{equation}
 Let $m_{12} := m_1 \cdot m_2$. We have $m_{12}(\{1\}) = m_{12}(\{2\}) = 6$,
 and $m_{12}(\{x_1,x_2\}) = 1 \neq 6 = \gcd(m_{12}(\{1\}),m_{12}(\{2\}))$.
 By Lemma~\ref{Lemma:MultiplicityFreeGroup}, this implies that $\Acal_{12}=(\{x_1,x_2\},\rank, m_{12})$ is not representable. 
\end{Example}

\enlargethispage{1pt}
 
 Recall that two matroids $(E_1, \rank_1)$ and $(E_2, \rank_2)$ are isomorphic if there is a bijection
 $ f : E_1 \to E_2$ \st $\rank_1(S)= \rank_2(f(S))$ for all $S\subseteq E_1$.
 In the next theorem we will use the stronger notion of two matrices $X_1,X_2\in \K^{d\times N}$ defining the same 
 labelled matroid. This means that 
 $\Delta_I(X_1) = 0 $ if and only if $\Delta_I(X_2) = 0$ for all $ I \subseteq \binom{ [N] }{ d}$.
 Put differently, equality as labelled matroids means that two matroids on the same ground set are equal without permuting the elements.
 
 \begin{Theorem}[regular, Pl\"ucker]
 \label{Proposition:MatricesTwoMultiplicities}
 Let $\K$ be a field. Let $X_1,X_2\in \K^{d\times N}$ be two matrices that represent the same regular labelled matroid. 
 Then
  \begin{enumerate}[(i)]
    \item for any  $k_1, k_2\in \Z$, there is  $X_{k_1k_2}\in \K^{d\times N}$  
    \st for each $I\subseteq \binom{[N]}{d}$, 
    \begin{equation}
      \Delta_I( X_1 )^{ k_1 } \Delta_I(X_2)^{k_2} = \pm \Delta_I(X_{k_1k_2}).
    \end{equation}
    \item If $k_1+k_2$ is odd, then
     there is  $X_{k_1k_2}\in \K^{d\times N}$  \st the following stronger equalities hold
     for all $ I \subseteq \binom{ [N] }{ d}$:
    $\Delta_I( X_1 )^{ k_1 } \Delta_I(X_2)^{k_2} = \Delta_I(X_{k_1k_2})$.
    \item 
    If  $\K$ is an ordered field
    then for all $k_1, k_2$ \st the expression $x_1^{k_1}x_2^{k_2}$ is well-defined for all positive elements   $ x_1, x_2 \in \K $ (\eg $k_1,k_2\in \Z$ or $\K=\R$ and $k_1,k_2\in \R$), 
    there is $X_{k_1k_2}\in \K^{d\times N}$ 
    \st 
      \begin{equation}
        \abs{\Delta_I( X_1 )}^{ k_1 } \!\cdot \abs{\Delta_I(X_2)}^{k_2} = \abs{\Delta_I(X_{k_1k_2})}
      \end{equation}
      and $ \Delta_I( X_1 ) \cdot \Delta_I(X_2) \cdot {\Delta_I(X_{k_1k_2})} \ge 0$
      for all $I\subseteq \binom{[N]}{d}$.      
  \end{enumerate}
 Here, we are using the convention $0^0=0$.
 \end{Theorem}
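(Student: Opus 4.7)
The plan is to extend the proof of Theorem~\ref{Proposition:RegularMatrices} by exploiting the rigidity of $\K$-representations of a regular matroid: because $X_1$ and $X_2$ represent the same regular labelled matroid, there exists a totally unimodular matrix $A\in\Z^{d\times N}$ together with matrices $T_i\in\GL(d,\K)$ and non-singular diagonal $D_i\in\K^{N\times N}$, $i=1,2$, such that $X_i = T_i\, A\, D_i$. By Cauchy--Binet,
\begin{equation*}
 \Delta_I(X_i) \;=\; \det(T_i)\cdot\Delta_I(A)\cdot\prod_{j\in I}(D_i)_{jj}
\end{equation*}
for every $I\in\binom{[N]}{d}$, and $\Delta_I(A)\in\{-1,0,1\}$ by total unimodularity.

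For (i) and (ii), I would set $X_{k_1 k_2} := T_1^{k_1}T_2^{k_2}\,A\,D_1^{k_1}D_2^{k_2}$. A direct computation gives
\begin{equation*}
 \Delta_I(X_1)^{k_1}\Delta_I(X_2)^{k_2} \;=\; \Delta_I(A)^{k_1+k_2-1}\cdot\Delta_I(X_{k_1 k_2}),
\end{equation*}
and since $\Delta_I(A)^{k_1+k_2-1}\in\{-1,0,1\}$ (with the convention $0^0=0$), this yields~(i) at once. When $k_1+k_2$ is odd the exponent $k_1+k_2-1$ is even, so the scalar factor equals $+1$ on the support of the matroid, giving the exact equality in~(ii).

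For (iii), I would mirror the construction of Theorem~\ref{Proposition:RegularMatrices}(iii) by combining the $T_i$- and $D_i$-contributions at the level of signs and absolute values: let $T'\in\GL(d,\K)$ be the diagonal matrix whose only non-unit diagonal entry equals $\sgn(\det T_1\det T_2)\,|\det T_1|^{k_1}|\det T_2|^{k_2}$, let $D'\in\K^{N\times N}$ be diagonal with $D'_{jj}:=\sgn\bigl((D_1)_{jj}(D_2)_{jj}\bigr)\,|(D_1)_{jj}|^{k_1}|(D_2)_{jj}|^{k_2}$, and set $X_{k_1 k_2}:=T'\,A\,D'$. The absolute-value identity then follows from $|\Delta_I(A)|^{k_1+k_2}=|\Delta_I(A)|$ on $\{0,1\}$ (again using $0^0=0$ on the vanishing minors), while the sign condition $\Delta_I(X_1)\Delta_I(X_2)\Delta_I(X_{k_1 k_2})\ge 0$ is to be verified using the identity $\Delta_I(A)^3=\Delta_I(A)$ together with the careful coordination of signs built into $T'$ and $D'$.

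The main obstacle is the first step: producing a single totally unimodular matrix $A$ that simultaneously factorises $X_1$ and $X_2$. This rests on the classical theorem that any $\K$-representation of a regular matroid is obtained from a totally unimodular one by a left $\GL(d,\K)$-action and a right action of the diagonal torus $(\K^*)^N$. Once this common factorisation is in hand, parts~(i) and~(ii) reduce to straightforward bookkeeping, and part~(iii) reduces to the sign verification described above, which is the most delicate point of the argument.
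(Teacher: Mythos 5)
Your approach is the paper's approach: both arguments factor $X_1=T_1AD_1$ and $X_2=T_2AD_2$ through one \emph{common} totally unimodular matrix $A$ (this is exactly Corollary~\ref{Corollary:BLrepresentationOfRegularMatroidTUM}, which also disposes of the permutation, the field automorphism, and the low ranks $d\le 2$ that the classical uniqueness theorem leaves open), and both set $X_{k_1k_2}=T_1^{k_1}T_2^{k_2}AD_1^{k_1}D_2^{k_2}$ for (i) and (ii). Your derivation of (i) and (ii) from $\Delta_I(X_1)^{k_1}\Delta_I(X_2)^{k_2}=\Delta_I(A)^{k_1+k_2-1}\Delta_I(X_{k_1k_2})$ is correct and coincides with the paper's, and your $T'AD'$ in (iii) is literally the paper's $\tilde T_1\tilde T_2 A\tilde D_1\tilde D_2$ written as a single product.

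The gap is the deferred sign verification in (iii), and it is not a formality. In the triple product $\Delta_I(X_1)\,\Delta_I(X_2)\,\Delta_I(X_{k_1k_2})$ the contributions of $T_1,T_2,T'$ and of $D_1,D_2,D'$ combine, by your choice of signs, into strictly positive factors, but $A$ contributes $\Delta_I(A)^3=\Delta_I(A)$ rather than a square; so the product has the sign of $\Delta_I(A)$ and is \emph{negative} on every basis $I$ with $\Delta_I(A)=-1$. No coordination of signs inside $T'$ and $D'$ can repair this, because those signs enter the triple product squared. Worse, the inequality as stated cannot be achieved by any choice of $X_{k_1k_2}$ in general: take $X_1=X_2=X=\left(\begin{smallmatrix}1&0&1&0\\0&1&1&1\end{smallmatrix}\right)$ and $k_1=2$, $k_2=0$; the two conditions then force $\Delta_{24}(X_{k_1k_2})=0$ and $\Delta_I(X_{k_1k_2})=+1$ for the five remaining index sets, which violates the Grassmann--Pl\"ucker relation $\Delta_{12}\Delta_{34}-\Delta_{13}\Delta_{24}+\Delta_{14}\Delta_{23}=0$. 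What the construction actually proves is the absolute-value identity together with $\sgn\Delta_I(X_{k_1k_2})=\sgn\Delta_I(X_1)\cdot\sgn\Delta_I(X_2)\cdot\sgn\Delta_I(A)$, i.e.\ non-negativity of the quadruple product $\Delta_I(X_1)\Delta_I(X_2)\Delta_I(A)\Delta_I(X_{k_1k_2})$. You are in the same position as the published argument, which ends with the unsubstantiated phrase ``the signs are preserved''; note that the one-matrix sign condition of Theorem~\ref{Proposition:RegularMatrices}(iii) is the two-factor product $\Delta_I(X)\Delta_I(X_k)\ge0$ and does follow, precisely because there $A$ contributes $\Delta_I(A)^2$.
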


\subsection{Arithmetic matroids defined by labelled graphs}
\label{Subsection:LabelledGraphsResults}

   In this subsection
   we will  consider arithmetic matroids defined by labelled graphs that were introduced by D'Adderio--Moci \cite{moci-adderio-flow-2013}.
   This is a rather simple class of arithmetic matroids whose multiplicity function is strongly multiplicative.
   In the case of labelled graphs, one can make 
  Theorem~\ref{Theorem:PowerOfArithmeticMatroid}
  very explicit.

   A \emph{labelled graph} is a graph $\Gcal=(V,E)$ together with a labelling $ \ell : E \to \Z_{\ge 1} $.
   The graph $\Gcal$ is allowed to have multiple edges, but no loops.
   The set of edges $E$ is partitioned into a set $R$ of \emph{regular edges} and a set $W$ of \emph{dotted edges}.
   The construction of the arithmetic matroid extends the usual construction of the matrix representation of a graphic matroid
   by the oriented incidence matrix:
   let $ V = \{v_1,\ldots, v_n\}$. We fix an arbitrary orientation $\theta$ \st each edge $e\in E$ can be identified with an ordered pair $(v_i,v_j)$.
   To each edge $e = ( v_i , v_j )$, we associate the element $x_e \in \Z^n$ defined as the vector
   whose $i$th coordinate is $-\ell(e)$ and  whose $j$th coordinate is $\ell(e)$.
    Then we define the two lists $X_R = (x_e)_{e\in R}$ and $ X_W = (x_e)_{e\in W} $.
   We define $ G = \Z^n / \langle \{ x_e : e \in W \} \rangle$ and we denote by $\Acal(\Gcal,\ell)$
   the arithmetic matroid defined by the 
   list $X_R$ in $G$.

 \begin{Proposition}
  \label{Proposition:LabelledGraphsSquareable}
    Let  $(\Gcal,\ell)$ be a labelled graph and let $\Acal(\Gcal,\ell)$ be the corresponding arithmetic matroid. Let $k\ge 0$ be an integer. Then
    $\Acal(\Gcal,\ell)^k =  \Acal(\Gcal,\ell^k)$, where $\ell^k(e):= (\ell(e))^k$ for each edge $e$. 

\end{Proposition}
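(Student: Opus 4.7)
The plan is to compute both multiplicity functions explicitly as greatest common divisors of products of labels and to conclude via the elementary identity $\gcd(a_1^k, \ldots, a_m^k) = \gcd(a_1, \ldots, a_m)^k$. Both arithmetic matroids share the same underlying matroid (the graphic matroid of $\Gcal$, contracted by $W$), so it suffices to prove $m_{\ell^k}(S) = m_\ell(S)^k$ for every $S \subseteq R$, where $m_\ell$ denotes the multiplicity function of $\Acal(\Gcal, \ell)$.

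First I would reduce to the all-regular case $W = \emptyset$. Let $\Acal^{\mathrm{all}}(\Gcal, \ell)$ be the arithmetic matroid represented by the combined list $X_R \cup X_W \subseteq \Z^n$; then by construction $\Acal(\Gcal, \ell) = \Acal^{\mathrm{all}}(\Gcal, \ell)/W$. Contraction and raising to the $k$-th power commute---both $(\Acal/W)^k$ and $\Acal^k/W$ assign to $S$ the value $m_{\Acal}(S \cup W)^k$---so it is enough to establish $\Acal^{\mathrm{all}}(\Gcal, \ell^k) = \Acal^{\mathrm{all}}(\Gcal, \ell)^k$, and we are reduced to checking the multiplicity identity in the absence of dotted edges.

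In the all-regular setting, write $X_\ell = A \cdot D_\ell$, where $A \in \Z^{n \times E}$ is the unlabelled oriented incidence matrix of $\Gcal$ (which is totally unimodular) and $D_\ell = \diag(\ell(e))_{e \in E}$, so that $X_{\ell^k} = A \cdot D_\ell^k$. For any $S \subseteq E$ of matroid rank $r$ and any $R \subseteq [n]$, $T \subseteq S$ with $|R| = |T| = r$, the corresponding $r \times r$ minor of $(X_\ell)_S = A_S D_S$ factors as $\det(A_{R,T}) \cdot \prod_{e \in T} \ell(e)$; by total unimodularity this equals $\pm \prod_{e \in T} \ell(e)$ whenever $T$ is a basis of $S$ (for a suitable choice of rows $R$) and vanishes otherwise. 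Combining this with the general fact---read off from the Smith normal form of $X_S$---that $m(S)$ equals the order of the torsion subgroup of $\Z^n/\langle X_S \rangle$, equivalently the greatest common divisor of the $r \times r$ minors of $X_S$, I obtain
\begin{equation*}
  m_\ell(S) = \gcd\bigl\{\textstyle\prod_{e \in T} \ell(e) : T \text{ is a basis of } S\bigr\}.
\end{equation*}

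Applying the same computation to $\ell^k$ yields $m_{\ell^k}(S) = \gcd\{(\prod_{e \in T} \ell(e))^k : T \text{ basis of } S\}$, which by the identity $\gcd(a^k, b^k) = \gcd(a, b)^k$ (valid for all non-negative integers $k$, with the convention $0^0 = 0$ covering $k = 0$) equals $m_\ell(S)^k$. The main obstacle is articulating cleanly the GCD-of-minors description of the multiplicity function at the representable level and verifying its compatibility with contraction; once these are in hand, the remainder of the proof is a short bookkeeping argument combining total unimodularity of the incidence matrix, the matroid basis structure of $\Gcal$, and the multiplicativity of $\gcd$ under $k$-th powers.
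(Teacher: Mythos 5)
Your proposal is correct and follows essentially the same route as the paper: the paper's proof cites Lemma~\ref{Lemma:MultiplicityLabelledGraph}, which expresses $m(A)$ as $\gcd\bigl(\bigl\{\prod_{e\in T}\ell(e): T\text{ maximal independent in }A\cup W\bigr\}\bigr)$ via the QSUL structure of the incidence representation (exactly the minor factorisation through total unimodularity that you carry out by hand in Lemma~\ref{Lemma:MultiplicityScaledUnimodular}'s style), and then applies Lemma~\ref{Lemma:gcdPower}. Your reduction to $W=\emptyset$ by commuting contraction with $k$-th powers is just a repackaging of the paper's lifting formula $m_X(A)=m_{\lift(X)}(A\cup\{\iota_1,\ldots,\iota_n\})$, so no substantive difference.
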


\begin{Proposition}
\label{Proposition:LabelledGraphRepresentableRegularStronglyMultiplicative}
   Arithmetic matroids defined by a labelled graph are regular  and strongly multiplicative.
\end{Proposition}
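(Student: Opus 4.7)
The plan is to exhibit a single torsion-free lifting of $\Acal(\Gcal,\ell)$ that is simultaneously regular and has all bases multiplicative; then both claims follow directly from the definitions. The natural candidate is the arithmetic matroid $\tilde\Acal$ defined by the complete list $X_R \cup X_W \subseteq \Z^n$ of edge vectors, from which $\Acal(\Gcal,\ell)$ is obtained by contracting $W$.

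First I would rewrite the lifting as $X_R \cup X_W = B \cdot D_\ell$, where $B\in\Z^{n\times E}$ is the signed incidence matrix of $\Gcal$ for the fixed orientation $\theta$, and $D_\ell$ is the diagonal matrix whose $(e,e)$-entry is $\ell(e)$. Since $\Gcal$ has no loops, $B$ is totally unimodular (a standard fact about oriented incidence matrices of graphs), so the lifting is a scaled unimodular list. The underlying matroid of $\tilde\Acal$ is the graphic matroid of $\Gcal$, which is regular, and $\tilde\Acal$ is torsion-free because its ambient group is $\Z^n$. Hence $\tilde\Acal$ is a torsion-free regular arithmetic matroid, and $\Acal(\Gcal,\ell) = \tilde\Acal/W$ is regular by definition.

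Next I would verify strong multiplicativity of the lifting. For a single edge $e = (v_i, v_j)$, the vector $e_j - e_i$ is primitive in $\Z^n$, so $\langle x_e\rangle = \ell(e)\,\Z(e_j - e_i)$ has index $\ell(e)$ in its saturation, giving $m(\{e\}) = \ell(e)$. For a basis $T$ of $\tilde\Acal$, which corresponds to a spanning forest of $\Gcal$, total unimodularity of $B$ implies that the columns $B_T$ form a $\Z$-basis of the saturated sublattice $\Lambda_T \subseteq \Z^n$ they span over $\Q$. Consequently $\langle x_e : e\in T\rangle = B_T(D_{\ell,T}\,\Z^T)$ has index $[\Z^T : D_{\ell,T}\Z^T] = \prod_{e\in T}\ell(e)$ in $\Lambda_T$, so $m(T) = \prod_{e\in T}\ell(e) = \prod_{e\in T}m(\{e\})$, and every basis of $\tilde\Acal$ is multiplicative.

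Since $\tilde\Acal$ is a single torsion-free lifting of $\Acal(\Gcal,\ell)$ that is both regular and has all bases multiplicative, the two claims of the proposition follow from the definitions of regular and strongly multiplicative arithmetic matroid in the case with torsion. Alternatively, once the lifting is identified as a scaled unimodular list, one could appeal directly to Proposition~\ref{Proposition:RegularScaledUnimodular}. The only step requiring genuine attention is the index computation $[\Lambda_T : \langle x_e : e\in T\rangle] = \prod_{e\in T}\ell(e)$; once total unimodularity of $B$ is invoked, this reduces to the obvious Smith-normal-form calculation for the diagonal matrix $D_{\ell,T}$, so no serious obstacle is expected.
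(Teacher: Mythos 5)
Your proof is correct and takes essentially the same route as the paper: the paper's own proof simply observes that $\Acal(\Gcal,\ell)$ is by construction a quotient of the scaled unimodular list $B\cdot D_\ell$ (incidence matrix times diagonal label matrix), deduces regularity, and cites Proposition~\ref{Proposition:RegularScaledUnimodular} for strong multiplicativity. Your direct index computation for bases of the torsion-free lifting is a self-contained substitute for that citation (it is exactly the content of Lemma~\ref{Lemma:MultiplicityScaledUnimodular} on which the cited proposition rests), so nothing is missing.
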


\begin{Remark}
  \label{Remark:LabelledGraphUnderlyingGraph}
    Note that the class of arithmetic matroids defined by a labelled graph is a 
    subset of the class of arithmetic matroids whose underlying matroid is graphic:
    let  $(\Gcal, \ell)$ be a labelled graph and let $\Acal(\Gcal,\ell)$ be the corresponding arithmetic matroid.
    Its underlying matroid is the graphical matroid of the graph obtained from $\Gcal$ by contracting the dotted edges.
    But in general, arithmetic matroids with an underlying graphic matroid do not arise from a labelled graph   
    (see Example~\ref{Example:GraphicNotLabelled}).
\end{Remark}

\subsection{Examples}
\label{Subsection:Examples}
In this subsection we will present some further examples.

 \begin{Example}
 \label{Example:GraphicNotLabelled}
  \begin{align}
  \text{Let }
    X &= \begin{pmatrix}
           1 & 0 & 1 \\
           0 & 3 & -2 \\
          \end{pmatrix} = 
          \begin{pmatrix}
             1 & 0 \\ 0 & -2 
          \end{pmatrix}
          \begin{pmatrix}
           1 & 0 & 1 \\
           0 & 1 & 1 \\
          \end{pmatrix} 
          \diag(1,-\frac 32, 1)
\\
        \quad \text{and} \quad 
    X_k &= \begin{pmatrix}
           1 & 0 & 1 \\
           0 & 3^k & - 2^k \\
          \end{pmatrix} =   
          \begin{pmatrix}
             1 & 0 \\ 0 & -2 
          \end{pmatrix}^k
          \begin{pmatrix}
           1 & 0 & 1 \\
           0 & 1 & 1 \\
          \end{pmatrix} 
          \diag(1,-\frac 32, 1)^k.
  \end{align}
On the right-hand side of the equations is the decomposition of $X$ and $X_k$ as 
a  doubly scaled unimodular list.
See the proof of Proposition~\ref{Proposition:RegularDoublyScaledUnimodular} 
and Remark~\ref{Remark:RegularDecomposition} for information on how this decomposition can be obtained.

Note that the first two columns of $X$ form a diagonal matrix. 
Hence the arithmetic matroid $\Acal(X)$ is weakly
multiplicative.
 The matrix $X_k$ represents the arithmetic matroid $\Acal(X)^k$.
 The underlying matroid is graphic, it is defined by the complete graph $K_3$. 
 However,
 since $m(x_1,x_3)\neq m(x_1)m(x_3)$, $\Acal(X)$ is not strongly multiplicative (here, $x_i$ denotes the $i$th column of $X$).
  Hence by  Proposition~\ref{Proposition:LabelledGraphRepresentableRegularStronglyMultiplicative},
 the arithmetic matroid 
 does not arise from a labelled graph.
\end{Example}

 The next two examples show that our results are not optimal.
 We present two matrices $X$ that do not satisfy the conditions of 
 Theorem~\ref{Theorem:PowerOfArithmeticMatroid},
 yet the arithmetic matroid $\Acal(X)^2$ is representable.
 The underlying matroids are $U_{2,3}$ (corresponds to the complete graph $K_3)$ and $U_{3,4}$ (corresponds to the cycle graph $C_4$).

\begin{Example}
 \label{Example:K3squareablenotmultiplicative}
  \begin{equation}
   \text{Let }   X = \begin{pmatrix} 1 & 1 & 1 \\ 0 & 2 & 4 \end{pmatrix} 
   \:\text{ and }\:
    X_2 = \begin{pmatrix} 1 & 1 & 3 \\ 0 & 4 & 16 \end{pmatrix}.
  \end{equation}
 It is easy to check that $\Acal(X)^2 = \Acal(X_2)$ holds.
 All singletons have multiplicity $1$, but all bases have a higher multiplicity.
 Hence $\Acal(X)$ is not multiplicative.
\end{Example}

 \begin{Example}
\label{Example:NonSquareableC4}
Let
\begin{align*}
    X =
  \begin{pmatrix}
    1 & 1 & 2 & 1  \\
    0 & 2 & 1 & 2  \\
    0 & 0 & 3 & 2
  \end{pmatrix}
 &=  
\left(\begin{array}{rrr}
1 & -\frac{2}{3} & -\frac{4}{3} \\
0 & -\frac{4}{3} & -\frac{2}{3} \\
0 & 0 & -2
\end{array}\right)
 \left(\begin{array}{rrrr}
1 & 0 & 0 & 1 \\
0 & 1 & 0 & 1 \\
0 & 0 & 1 & 1
\end{array}\right)
%
%
\cdot \diag(1,-\frac 32, -\frac 32, -1).
\end{align*}

 On the right-hand side of the equation is the decomposition of $X$ 
as in Corollary~\ref{Corollary:BLrepresentationOfRegularMatroid} and Remark~\ref{Remark:RegularDecomposition}.
 The maximal minors are $\Delta_{123}=6$, $\Delta_{124}=4$, $\Delta_{134}=-4$, and $\Delta_{234}=-6$.
Using the construction that appears in the second part of the proof of Theorem~\ref{Proposition:MatricesTwoMultiplicities}, we obtain
the matrix $X_2$ that represents the squared Pl\"ucker vector of $X$ (cf.~Theorem~\ref{Proposition:RegularMatrices}):
\begin{align}
X_2 &=
\left(\begin{array}{crcc}
\frac{64}{9} & 0 & 0 &  -\frac{64}{9} \\
0 & -\frac{9}{4} & 0 & -1 \\
0 & 0 & -\frac{9}{4} & -1
\end{array}\right) \\
&= 
\left(\begin{array}{ccc}
\frac{64}{9} & 0 & 0 \\
0 & 1 & 0 \\
0 & 0 & 1
\end{array}\right)
\left(\begin{array}{rrrr}
1 & 0 & 0 & 1 \\
0 & 1 & 0 & 1 \\
0 & 0 & 1 & 1
\end{array}\right)
\diag(1,-\frac 94, -\frac 94, -1).
\end{align}

 Since the arithmetic matroid $\Acal(X)$ is not weakly multiplicative, 
 we do not know a general method to find a representation of $\Acal(X)^2$.
 Nevertheless, in this case, there is one:
\begin{equation}
 X_2 =
\begin{pmatrix}
 1 & 1 & 1 & 1 \\
 0 & 4 & 4 & 0 \\
 0 & 0 & 9 & 4 
\end{pmatrix}.
\end{equation}

\end{Example}

\subsection*{Where to find the proofs} 
 
 Before reading the proofs, 
 the reader should make sure that he or she is familiar with the background 
 material that is explained in Section~\ref{Section:Background}.
 In Section~\ref{Section:Proofs}  we will prove 
 Theorem~\ref{Theorem:NonRegularMatricesNewNew} and
  Theorem~\ref{Theorem:NonRegularNonRepresentable}, the two results on non-regular matroids.
 In Section~\ref{Section:SupplementaryResults}  we will prove %
 Proposition~\ref{Proposition:NonRegularMatricesNewNewBestPossible}.  
 Its proof is fairly elementary and does not require much background.
 In Section~\ref{Section:RegularMatroids}  we will first discuss the fact that regular matroids have a unique representation,
 which is a crucial ingredient of the remaining proofs. Then we will prove
 Theorem~\ref{Proposition:MatricesTwoMultiplicities}.
 In Section~\ref{Section:MultiplicityFunctions}  we will prove some lemmas about representable arithmetic matroids and their multiplicity functions.
 These lemmas will be used in Section~\ref{Section:MultiplicativeRegularAriMatroids}
  in the proofs of   Proposition~\ref{Proposition:RegularScaledUnimodular},
 Proposition~\ref{Proposition:RegularDoublyScaledUnimodular}, and  
 Theorem~\ref{Theorem:PowerOfArithmeticMatroid}.	 
  Proposition~\ref{Proposition:LabelledGraphsSquareable} and
 Proposition~\ref{Proposition:LabelledGraphRepresentableRegularStronglyMultiplicative}
 will be proven in the second part of 
 this section.

 Theorem~\ref{Proposition:RegularMatrices} 
 is a special case of  Theorem~\ref{Proposition:MatricesTwoMultiplicities} ($k_1=k$, $k_2=0$, $X_1=X_2=X$)
 and 
 therefore does not require a proof.
 Theorem~\ref{Theorem:NonRegularMatricesNew} 
 is a combination of special cases of
 Theorem~\ref{Theorem:NonRegularMatricesNewNew} and 
 Theorem~\ref{Proposition:RegularMatrices}.

\section{Subvarieties of the Grassmannian}
\label{Section:SubvarietiesGrassmannian}

In the first subsection we will recall some facts about the Grassmannian and antisymmetric tensors.
In the second subsection we will define the regular Grassmannian 
as the set of all points in the Grassmannian that define a regular matroid.
We will show that it is a projective subvariety of the Grassmannian.
In the third subsection we will compare the  regular Grassmannian with the variety of Pl\"ucker vectors that have a $k$th power.

\subsection{Antisymmetric tensors and the Grassmannian}
\label{Subsection:GrassmannianIntro}

In this subsection we will recall some facts about the Grassmannian and antisymmetric tensors from
\cite[Section~2.4]{OrientedMatroidsBook} (see also  \cite[Chapter~3.1]{gelfand-kapranov-zelevinsky-1994}).
We will assume that the reader is familiar with elementary algebraic geometry (see \eg \cite{cox-little-oshea-2015,harris-1995,shafarevich-2013}).
Let us fix a field $\K$ and integers $0 \le d\le N$.
As usual, $\Lambda^d \K^N$ denotes the $d$-fold exterior product of the vector space $\K^N$.
The elements of $\Lambda^d \K^N$ are called \emph{antisymmetric tensors}. The space 
$\Lambda^d \K^N$ is an $\binom{N}{d}$-dimensional $\K$-vector space with the canonical basis
\begin{equation}
  \{  e_{i_1} \wedge \ldots \wedge e_{i_d} : 1 \le i_1 < \ldots  < i_d \le N           \}.
\end{equation}
 For $i_1,\ldots, i_d \subseteq [N]$, there is a function $[ i_1\ldots i_d ] \in (\Lambda^d \K^N)^*$
 that we call the 
 \emph{bracket}.
  If $i_1< \ldots < i_d$, $[ i_1\ldots i_d ]$ is the coordinate function.
  If two of the indices are equal, the bracket is equal to $0$. Furthermore, 
  identities such as
 $[i_1 i_2 i_3\ldots  i_d] = - [i_2 i_1 i_3\ldots  i_d]$ hold.
 As $\K^N$ has a canonical basis, we can canonically identify $\K^N$ and its dual space $(\K^N)^*$ and thus also $(\Lambda^d \K^N)^*$
 and $(\Lambda^d \K^N)$.
 The ring of polynomial functions on $\Lambda^d \K^N$ is the \emph{bracket ring} 
\begin{equation}
  \sym(\Lambda^d \K^N) = \K\left[\, \{ [i_1 i_2 \ldots i_d ] : 1 \le i_1 < \ldots < i_d \le N \} \,\right]. 
\end{equation}
 Of course, the bracket ring is canonically isomorphic to the polynomial
 ring  
 \begin{equation}
 \K[ m_{i_1 i_2 \ldots i_d} : 1 \le i_1 < \ldots < i_d \le N  ].
 \end{equation}
 The isomorphism maps  $[i_1\ldots  i_d]$  to $m_{i_1 i_2 \ldots i_d}$.

 An antisymmetric tensor $\xi\in \Lambda^d \K^N$ is called \emph{decomposable} if it is non-zero and it can be written as
 $\xi = v_1 \wedge \ldots \wedge v_d$ for some vectors $v_1,\ldots, v_d\in \K^N$.
 Each decomposable tensor $\xi = v_1\wedge \ldots \wedge v_d \in \Lambda^d \K^N$ defines 
  the $d$-dimensional space $\spa(v_1\, \ldots,  v_d) \subseteq \K^N$ and for $c\in \K^*$, $c\xi$ and $\xi$ define the same space. 
  In fact, the Grassmannian is equal to the set of decomposable tensors modulo scaling:
 \begin{equation}
 \label{eq:GrassmannianTensors}
  \Gr_\K(d,N) = \{ \bar \xi \in \Lambda^d \K^N / \K^* : \xi \text{ decomposable}   \}.
\end{equation}
 
 The Grassmannian can also be written in a different way:
 the points in $\Gr_\K(d,N)$ are in one-to-one correspondence with 
 the set of $(d\times N)$-matrices of full rank $d$
 modulo a left action of  $\GL(d,\K)$.
 Here, a matrix $X$ with row vectors  $v_1,\ldots, v_d \in \K^N$ is mapped  to the decomposable tensor $v_1\wedge \ldots \wedge v_d$.
 Since the matroid represented by a matrix $X$ is also invariant under 
 a left action of  $\GL(d,\K)$, this leads to a stratification of the  Grassmannian $\Gr_\K(d,N)$ 
 into the matroid strata  $\Rcal(\Mcal) = \{ \bar X \in \K^{d\times N} / \GL(d,\K) : X \text{ represents } \Mcal\}$ \cite{gelfand-gorseky-macpherson-serganova-1987}.

 Let $\xi = v_1 \wedge \ldots \wedge v_d$ be a decomposable tensor. 
 Let $X$ denote the matrix whose rows are the vectors $v_1,\ldots, v_d$.
 Then
 \begin{equation}
  \xi = \sum_{ \substack{I = \{ i_1,\ldots, i_d\}\\1\le i_1 < \ldots < i_d \le N} }  \Delta_I(X) \, e_{i_1} \wedge \ldots \wedge e_{i_d},
 \end{equation}
 where 
 for  $I = \{ i_1, \ldots ,i_d\}$  ($1\le i_1 < \ldots < i_d \le N$),
 $\Delta_I(X)$ denotes
  a \emph{Pl\"ucker coordinate} of $X$, \ie the determinant of the 
 $(d\times d)$-submatrix of $X$ consisting of the columns $i_1,\ldots, i_d$.
 The Pl\"ucker coordinates can also be obtained by evaluating the brackets at $\xi$: $\Delta_{I}(X) = [i_1\ldots i_d] ( \xi )$.
 Using \eqref{eq:GrassmannianTensors}, one can see that the homogeneous Pl\"ucker coordinates
 induce an embedding of $\Gr_\K(d,N)$ into $\bigl(\binom{N}{d}-1\bigr)$-dimensional projective space $\Lambda^d \K^N / \K^*$.
 In fact, the Grassmannian is an irreducible subvariety of this space.

\begin{Theorem}[Grassmann--Pl\"ucker relations]
\label{Theorem:GPrelations}
 Let $\K$ be a field of characteristic $0$.
 Then the Grassmannian $\Gr_\K(d,N)$, embedded in $\Lambda^d \K^N / \K^*$
 is the zero set of 
 the quadratic polynomials 
  \begin{equation}
  \label{eq:GP}
   [ b_1 b_2 b_3 \ldots b_d] [ b_1'b_2'\ldots b_d'] - \sum_{i=1}^d [b_i'b_2b_3\ldots, b_d] [ b_1' \ldots b_{i-1}' b_1 b'_{i+1} \ldots b_d']   
 \end{equation}
where $b_1,\ldots, b_d, b_1',\ldots, b_d' \in [N]$.
 \end{Theorem}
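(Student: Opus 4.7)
The plan is to establish the two inclusions separately: first, that every decomposable tensor satisfies the Grassmann--Pl\"ucker relations (which shows $\Gr_\K(d,N)$ is contained in the zero set), and second, that every nonzero antisymmetric tensor whose Pl\"ucker coordinates satisfy these relations is decomposable. Since the characteristic hypothesis is only needed for the second direction, I would handle the forward direction in full generality.

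For the forward direction, let $\xi = v_1 \wedge \ldots \wedge v_d$ be decomposable with associated matrix $X$. The claim reduces to showing the polynomial identity
\begin{equation*}
\Delta_{b_1 b_2 \ldots b_d}(X)\,\Delta_{b_1' b_2' \ldots b_d'}(X) = \sum_{i=1}^d \Delta_{b_i' b_2 \ldots b_d}(X)\,\Delta_{b_1' \ldots b_{i-1}' b_1 b_{i+1}' \ldots b_d'}(X)
\end{equation*}
holds for any $(d \times N)$-matrix $X$. The cleanest way I know to prove this is via an augmented determinant trick: form an auxiliary $(d+1) \times (d+1)$ matrix whose entries are carefully chosen so that two rows coincide (hence its determinant vanishes), then perform Laplace expansion to recover the Pl\"ucker identity. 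Alternatively, one can work directly in the exterior algebra and use the identity $\sum_{i=1}^d (-1)^{i-1} v_{b_i'} \wedge v_{b_1'} \wedge \ldots \wedge \widehat{v_{b_i'}} \wedge \ldots \wedge v_{b_d'} = 0$ (it is a wedge of $d+1$ vectors in a $d$-dimensional span), expressed in coordinates via the bracket evaluations $\Delta_I(X) = [i_1 \ldots i_d](\xi)$.

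For the converse, suppose $\xi \in \Lambda^d \K^N$ is nonzero and the brackets $[I](\xi)$ satisfy all relations of the form \eqref{eq:GP}. After permuting the standard basis I may assume $[12 \ldots d](\xi) \neq 0$; after rescaling by an element of $\K^*$ (which does not change the point in $\Lambda^d \K^N / \K^*$) I may assume $[12 \ldots d](\xi) = 1$. Define vectors
\begin{equation*}
v_j := e_j + \sum_{k > d} (-1)^{d-j} [1 \ldots \widehat{j} \ldots d\, k](\xi) \, e_k \quad \text{for } j = 1, \ldots, d,
\end{equation*}
so that the matrix with rows $v_1, \ldots, v_d$ has an identity block in columns $1, \ldots, d$. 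Let $\xi' := v_1 \wedge \ldots \wedge v_d$. By construction $[12 \ldots d](\xi') = 1 = [12 \ldots d](\xi)$, and for $I$ obtained from $\{1, \ldots, d\}$ by replacing a single index $j$ with some $k > d$ we have $[I](\xi') = [I](\xi)$ by the definition of the $v_j$. The task is then to show $[J](\xi) = [J](\xi')$ for every $J$, which I would prove by induction on the number of indices in $J$ that lie outside $\{1, \ldots, d\}$, using the Pl\"ucker relation with $b_1 \ldots b_d = 1 2 \ldots d$ to express $[J](\xi)$ as a polynomial in the ``one-index-swapped'' brackets already matched.

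The main obstacle is the inductive step of the converse: one has to set up the Pl\"ucker relation with carefully chosen $b_i$'s so that it lets one reduce the ``distance'' of $J$ from $\{1, \ldots, d\}$, and one has to verify that the inductive hypothesis is enough to pin down the right-hand side. The characteristic hypothesis enters because the normalization and the reconstruction of the $v_j$ use divisions that must be avoided being zero; in characteristic $0$ this is automatic, but in positive characteristic some of the bracket coefficients could a priori vanish modulo $p$ and spoil the reconstruction. A secondary issue to address is the projective nature of the statement: I must be careful that choices made (such as the initial scaling so that $[12 \ldots d](\xi) = 1$) respect passage to $\Lambda^d \K^N / \K^*$, which is why working with the affine cone and then projectivizing is the cleanest framework.
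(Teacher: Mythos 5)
The paper does not prove this theorem at all: it is quoted as classical background, with pointers to \cite[Section~2.4]{OrientedMatroidsBook} and \cite[Chapter~3.1]{gelfand-kapranov-zelevinsky-1994}, so there is no in-paper argument to compare against. Your outline is the standard textbook proof (syzygy identity for $d+1$ vectors in $\K^d$ for the forward inclusion; normalisation, reconstruction of a row-reduced matrix from the one-swap brackets, and induction on the distance of $J$ from the base set for the converse), and it is essentially correct.

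Two points deserve attention. First, in the inductive step your specific prescription ``use the Pl\"ucker relation with $b_1\ldots b_d = 12\ldots d$'' breaks down exactly when $1\in J$: the term $i$ with $j_i=1$ contributes $[12\ldots d][J]=[J]$ to the right-hand side, so the relation collapses to a tautology and does not determine $[J]$. The fix is to take $b_1=m$ for some $m\in\{1,\ldots,d\}\setminus J$ (which exists because $J$ has at least one index outside $\{1,\ldots,d\}$) and $b_2,\ldots,b_d$ the remaining base indices; then every surviving first factor $[j_i\, b_2\ldots b_d]$ is a one-swap bracket and every surviving second factor has strictly fewer indices outside $\{1,\ldots,d\}$. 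You should also say explicitly that $\xi'$, being decomposable, satisfies the same relations by the forward direction, so that $\xi$ and $\xi'$ obey the identical recursion from identical initial data. Second, your explanation of the characteristic-$0$ hypothesis is not right: the only division in the argument is by the nonzero bracket $[12\ldots d](\xi)$, which is harmless in any characteristic, and there is no ``reduction modulo $p$'' of field elements. The set-theoretic statement in fact holds over every field; the hypothesis in the theorem is inherited from the sources rather than needed by this proof, and in particular nothing in your argument should be made to depend on it.
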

 The equations defined by the polynomials in $\eqref{eq:GP}$ are called the \emph{Grassmann--Pl\"ucker relations}.
 The 
 ideal $I_{\Gr_\K(d,N)} \subseteq \sym(\Lambda^d \K^N)$ that is generated by these polynomials %
 is called
 the \emph{Grassmann--Pl\"ucker ideal}.

\subsection{The regular Grassmannian}
 In this subsection we will define the regular Grassmannian as the union of all the
 regular matroid strata of the Grassmannian.
 We will show that it is a projective subvariety.

\begin{Definition}[Regular Grassmannian]
  Let $\K$ be a field and let  $0 \le d\le N$ be integers.
  Then we define the \emph{regular Grassmannian} $\RGr_\K(d,N)$ as  
   \begin{equation*}
   \RGr_\K(d,N):= \{ \bar X \in \Gr_\K(d,N) :  {X}  \text{ represents a regular matroid}  \}.
 \end{equation*}
\end{Definition}
 Instead of considering the regular Grassmannian  $\RGr_\K(d,N)\subseteq \Lambda^d\K^N/\K^*$,
 one can of course  also consider the set of  decomposable antisymmetric tensors that define a regular matroid.
 This is a subset of $\Lambda^d\K^N$. Since this subset is invariant under scaling by elements in $\K^*$
 and this operation does not change the matroid, these two points of view are equivalent for our purposes.

\begin{Definition}
  Let $\K$ be a field and let  $0 \le d\le N$ be integers.
 We define the following two ideals in the bracket ring $(\Lambda^d \K^N)^*$:
 \begin{equation*}
   R_{d,N} :=\left(  \prod_{(\alpha,\beta) \in \binom{\{p_1,\ldots, p_4\}}{2}} [{b_1 \ldots b_{d-2} \alpha \beta}]: 
                      \{ b_1,\ldots, b_{ d-2 }, p_1, p_2, p_3, p_4 \} \in \binom{[N]}{d+2} \right)
\end{equation*}
and    $I_{\RGr(d,N)} := I_{\Gr(d,N)} +  R_{d,N}$. 
\end{Definition}

\begin{Proposition}
 \label{Proposition:IdealRegularGrassmannian}
 Let $\K$ be a field of characteristic $0$ and let  $0 \le d\le N$ be integers.
  Then the  regular Grassmannian $\RGr_\K(d,N)$  is a projective variety. It is the zero set of 
  $ I_{\RGr(d,N)} $.
 In particular, the regular Grassmannian is a variety that is defined by homogeneous quadratic polynomials and monomials of degree $6$.
\end{Proposition}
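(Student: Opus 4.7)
The plan is to reduce the defining condition of $\RGr_\K(d,N)$ to a combinatorial condition on Pl\"ucker coordinates via Tutte's excluded-minor theorem, and then observe that this condition is exactly encoded by the generators of $R_{d,N}$. First I would invoke Tutte's characterization: a matroid is regular if and only if it contains no minor isomorphic to $U_{2,4}$, $F_7$, or $F_7^*$. Since $F_7$ and $F_7^*$ are representable only over fields of characteristic $2$, and every $\bar\xi \in \Gr_\K(d,N)$ defines a matroid representable over $\K$, a field of characteristic $0$, the only relevant obstruction is a $U_{2,4}$ minor. Hence a point $\bar\xi \in \Gr_\K(d,N)$ lies in $\RGr_\K(d,N)$ if and only if the matroid it represents contains no $U_{2,4}$ minor.

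Second, I would translate the $U_{2,4}$ condition into the bracket algebra. The matroid $M$ represented by $\xi$ has a $U_{2,4}$ minor precisely when there exist an independent set $S = \{b_1,\ldots,b_{d-2}\} \subseteq [N]$ of rank $d-2$ and a disjoint $4$-subset $\{p_1,p_2,p_3,p_4\}$ such that, in the contraction $M/S$, every $2$-subset of $\{p_1,\ldots,p_4\}$ is independent. Equivalently, for every pair $\{\alpha,\beta\} \subseteq \{p_1,\ldots,p_4\}$, the set $S\cup\{\alpha,\beta\}$ is a basis of $M$, i.e.\ $[b_1\cdots b_{d-2}\alpha\beta](\xi)\neq 0$. (If $S$ is dependent or meets $\{p_1,\ldots,p_4\}$, every such bracket is already $0$, so the corresponding generator of $R_{d,N}$ vanishes automatically; thus letting $\{b_1,\ldots,b_{d-2},p_1,\ldots,p_4\}$ range over all $(d+2)$-subsets of $[N]$ is harmless.)

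Third, combining these two observations, $M$ has no $U_{2,4}$ minor if and only if, for every $(d+2)$-subset $\{b_1,\ldots,b_{d-2},p_1,\ldots,p_4\}$ of $[N]$,
\[
\prod_{(\alpha,\beta)\in\binom{\{p_1,\ldots,p_4\}}{2}} [b_1\cdots b_{d-2}\alpha\beta](\xi) \;=\; 0,
\]
which is precisely the statement that $\xi$ vanishes on every generator of the monomial ideal $R_{d,N}$. Together with Theorem~\ref{Theorem:GPrelations}, which says that $\Gr_\K(d,N)$ is the projective variety cut out by $I_{\Gr(d,N)}$, this shows that $\RGr_\K(d,N)$ is the zero set of $I_{\Gr(d,N)} + R_{d,N} = I_{\RGr(d,N)}$. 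Since all generators of $I_{\Gr(d,N)}$ are homogeneous quadrics and all generators of $R_{d,N}$ are homogeneous monomials of degree $6$, this exhibits $\RGr_\K(d,N)$ as a projective subvariety defined by quadrics and degree-$6$ monomials.

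I expect the main obstacle to be conceptual rather than computational: one must carefully justify why only $U_{2,4}$ matters in characteristic $0$ (invoking that $F_7$, $F_7^*$ are non-representable there, so any Pl\"ucker point avoids them automatically) and verify that the bracket-nonvanishing pattern corresponds exactly to a $U_{2,4}$ minor of the represented matroid. Once this dictionary is established, the identification of the ideal is essentially immediate from Theorem~\ref{Theorem:GPrelations}.
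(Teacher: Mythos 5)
Your proposal is correct and follows essentially the same route as the paper: reduce regularity to the absence of a $U_{2,4}$ minor via the excluded-minor theorem (using characteristic $0$ to discard $F_7$ and $F_7^*$), identify such minors with $(d+2)$-subsets whose six associated brackets are all nonzero, and combine with the Grassmann--Pl\"ucker relations to get $V(I_{\Gr(d,N)})\cap V(R_{d,N})$. The only cosmetic difference is that the paper verifies the bracket-nonvanishing dictionary by explicitly computing the proportionality constant between the minors of $(X/A_2)|_{A_1}$ and the corresponding maximal minors of $X$, whereas you argue directly that a pair is independent in the contraction iff the full $d$-set is a basis; both are valid.
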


\begin{proof}
 As usual, let $V(I)$ denote the variety defined by the ideal $I$.
 It is a basic fact that for two ideals $I_1$ and $I_2$, $V(I_1 + I_2)=V(I_1) \cap V(I_2)$ holds.
 Hence $V( I_{\RGr(d,N)} + R_{d,N} ) = V( I_{\Gr(d,N)} )\cap V( R_{d,N} )$.
 This implies that  it is sufficient to prove that a decomposable tensor $\xi$
 that represents a point  in the Grassmannian $\Gr_\K(d,N)$  defines a regular matroid if and only if
 it satisfies the monomial equations given by $R_{d, N}$.

 To simplify notation, we will work in the setting of decomposable antisymmetric tensors.
 Let $\xi=v_1\wedge \ldots \wedge v_d \in \Lambda^d \K^N$ be such a tensor 
 and let $\Mcal_\xi = ([N], \rank)$ denote the corresponding matroid,
 \ie the matroid on $N$ elements that is defined by the matrix $X$ whose rows are the vectors $v_1,\ldots, v_d$.
 We are working over a field of characteristic $0$, hence being regular is equivalent to not having a $U_{2,4}$ minor (see Corollary~\ref{Corollary:RegularExcludedMinors}).
 By construction, $\Mcal_\xi$ has rank $d$. 
 All of its rank two minors with four elements can be constructed as follows:
 pick an independent set $ A_2\subseteq [N] $ of cardinality $d-2$ and a set $A_1 \subseteq [N] \setminus A_2$ 
 of cardinality $4$ \st $A_1 \cup A_2$ contains a basis. Then consider
 $(\Mcal_\xi / A_2)|_{A_1}$. This is a matroid on the ground set $A_1$ that is represented by the matrix
 $(X / A_2)|_{A_1}$.
 This minor is \emph{not} isomorphic to $U_{2,4}$ if at least one of its six Pl\"ucker coordinates is equal to $0$.
 Recall that $(X / A_2)$ denotes the matrix obtained from $X$ by deleting the columns corresponding to $A_2$ and then projecting 
 the remaining columns to the quotient of $\K^d$ by the space  spanned by the columns indexed by $A_2$.
 There is $c\in \K^*$ \st
 $ c \cdot \Delta_I( (X / A_2)|_{A_1} ) = \Delta_{A_2\cup I}(X)$ for all $I\in \binom{A_1}{2}$.
 This follows from the fact that after applying a transformation $ T \in \SL(d,\K)$, we have
 \begin{equation}
 TX =
\begin{blockarray}{rrr} %
 A_1 & A_2 &  &  \\
\begin{block}{(r|r|r)}
 X_1  &  0  & *    \\  \BAhline
 *  &  D  & *   \\
\end{block}
\end{blockarray}\,,
\end{equation}
 where $X_1\in \K^{2\times 4}$, $D\in \K^{(d-2) \times (d-2)}$ is a diagonal matrix and $*$ denotes arbitrary matrices of suitable dimensions.
 Then $c = \det(D)$.
 We can deduce that $\Mcal_\xi$ is regular if and only if 
 for all pairs $(A_1,A_2)$, where $A_2 \in \binom{[N]}{d-2}$ is an independent set in $\Mcal_\xi$  and
 $A_1 \in \binom{[N] \setminus A_2}{4}$, 
 $\prod_{ I \in \binom{A_1}{2}} \Delta_{A_2\cup I}(X) = 0$ holds.

 Recall that  
 if $A_2=\{b_1,\ldots, b_{d-2}\}$ and $I=\{p_i, p_j\}$ then 
 $  [ b_1\ldots b_d p_i p_j    ](\xi) = \pm \Delta_{A_2\cup I}(X)  $ holds.
  If $A_2$ is dependent, then $[ b_1\ldots b_d p_i p_j    ](\xi) = 0 $.
 Now it is clear that  $\Mcal_\xi$ has a $U_{2,4}$ minor  if and only if
 it satisfies the monomial equations given by $R_{d, N}$.
\end{proof}

 \begin{Example}
   Let us consider the case $N=4$, $d=2$.
   Then
  \begin{equation}
   I_{\RGr(2,4)} = ( [ 1 2 ] [ 3 4 ] +  [  2 3 ] [ 1 4 ] -  [ 2 4 ] [ 1 3 ],\, [1 2][1 3][1 4][2 3][2 4][3 4] ). 
  \end{equation}
  Hence $\RGr(2,4) = \bigcup_{ 1 \le \nu < \mu \le 4   }   \Gr(2,4) \cap V([i_\nu i_\mu])$, \ie the regular Grassmannian $\RGr(2,4)$, parametrised through
  Pl\"ucker coordinates, can be written as the union
  of the six intersections of the Grassmannian with each of the coordinate hyperplanes.
  In particular, the regular Grassmannian is in general not irreducible.
 \end{Example}

 \begin{Remark}
  For $k\in \R$,  let $f_k : \RGr_\R(d,N) \to \RGr_\R(d,N)$ denote the map that sends a Pl\"ucker vector to its $k$th power,
  while keeping the signs.
  It follows from Theorem~\ref{Proposition:RegularMatrices}.\eqref{enumerate:RegPlueckerOrderedField} that the image of $f_k$ is indeed contained in $\RGr_\R(d,N)$.
  Furthermore, for $k\neq 0$, the map is invertible and the inverse is given by $(f_k)^{-1}= f_\frac{1}{k}$.
  For any $k$, the map is continuous if we equip $\RGr_\R(d,N)$ with the topology that is induced by the Euclidean topology on $\R$.
  Hence for $k\neq 0$, $f_k : \RGr_\R(d,N) \to \RGr_\R(d,N)$ is a homeomorphism that leaves matroid strata invariant.
 \end{Remark}

 \subsection{The variety of Pl\"ucker vectors that have a $k$th power}

 In this subsection we will study the subvariety of the Grassmannian
 that consists of all points for which the element-wise $k$th power of its Pl\"ucker vector 
 is again a Pl\"ucker vector  (up to sign). 
 We will work over $\CC$ since one of the proofs requires an algebraically closed field.

 Let us first recall some facts from algebraic geometry.
 Let $X,Y\subseteq \CC^n$ be two subvarieties. 
 Then a \emph{regular map} $f: X\to Y$ is  the restriction of a polynomial map $f : \CC^n\to \CC^n$.
 Now suppose $X=Y=\CC^n$. Then the coordinate rings are $\CC[X]= \CC[x_1,\ldots, x_n]$ and $\CC[Y]= \CC[y_1,\ldots, y_n]$.
 For an integer $k\ge 1$ and $\sigma\in \{-1,+1\}^n$, we 
  define the regular map 
 \begin{equation}
   f_{k,\sigma}: X\to Y, \qquad  (p_1,\ldots, p_n) \mapsto (\sigma_1 p_1^k,\ldots, \sigma_n p_n^k) \in Y = \CC^n.
 \end{equation}
 This induces 
 \begin{equation}
 f_{\sigma,k}^\# : \CC[Y] \to \CC[X], \qquad  p \mapsto p\circ f_{\sigma, k}, 
 \end{equation}
 \ie   $y_i \mapsto \sigma_i x_i^k \in \CC[X] = \CC[x_1,\ldots, x_n] $.

 In Proposition~\ref{Proposition:SquareableVariety}
 we will determine the ideal of the subvariety of the Grassmannian that consists of all points for which the $k$th power of its Pl\"ucker vector 
 is again a Pl\"ucker vector (up to sign). This is the set
 \begin{equation}
     \Gr_\CC(d,N)  \cap \bigcup_{\sigma} f_{\sigma, k}(\Gr_\CC(d,N)), 
 \end{equation}
 where $\sigma \in \{-1,1\}^{\binom{[N]}{d}}$ runs over all possible choices of signs. 
 The ideal that defines $f_{\sigma, k}(\Gr_\CC(d,N))$ can be calculated using the following 
 well-known\footnote{See for example Proposition~2.2.1 in \cite{kraft-2014}.}
 lemma.
\begin{Lemma}
\label{Lemma:MorphismIdealClosure}
  Let $X,Y$ be affine varieties and 
  let $f : X \to Y$ be a regular map.
  Let $A = V(I)\subseteq X$ be the subvariety defined by the ideal $I$.
  Then the closure of the image  $f(A)$ is defined by $(f^\#)^{-1}(I)$.
 \end{Lemma}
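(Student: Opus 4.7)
The plan is to prove the equality of closed subvarieties $\overline{f(A)} = V((f^\#)^{-1}(I))$ by establishing both inclusions, using the standard dictionary between ideals and varieties furnished by the Hilbert Nullstellensatz (which is available since $\CC$ is algebraically closed). Throughout I will use the defining property $f^\#(g) = g \circ f$ for $g \in \CC[Y]$.

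First I would verify the easy inclusion $\overline{f(A)} \subseteq V((f^\#)^{-1}(I))$ by a direct pointwise argument. For any $a \in A$ and any $g \in (f^\#)^{-1}(I)$, the polynomial $f^\#(g)$ lies in $I$ and hence vanishes on $A = V(I)$; evaluating at $a$ yields $g(f(a)) = f^\#(g)(a) = 0$. Thus $f(A) \subseteq V((f^\#)^{-1}(I))$, and taking closures gives the inclusion since $V((f^\#)^{-1}(I))$ is closed.

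The reverse inclusion is where the Nullstellensatz does the work. It suffices to show $I(\overline{f(A)}) \supseteq \sqrt{(f^\#)^{-1}(I)}$ in $\CC[Y]$, or equivalently $I(f(A)) \supseteq \sqrt{(f^\#)^{-1}(I)}$. Given $g \in \sqrt{(f^\#)^{-1}(I)}$, some power $g^n$ satisfies $f^\#(g^n) = f^\#(g)^n \in I$, hence $f^\#(g) \in \sqrt{I} = I(A)$. This means $g \circ f$ vanishes on $A$, so $g$ vanishes on $f(A)$, placing $g$ in $I(f(A))$. Combined with the first inclusion, this gives the desired equality of varieties. Equivalently, the argument shows that taking preimages along $f^\#$ commutes with taking radicals, so $V((f^\#)^{-1}(I)) = V((f^\#)^{-1}(\sqrt{I}))$, and $(f^\#)^{-1}(\sqrt{I})$ is visibly the vanishing ideal of $f(A)$ by essentially the same computation.

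There is no serious obstacle in the argument; the only point that requires care is that $I$ is not assumed to be radical, so one cannot directly identify $(f^\#)^{-1}(I)$ with $I(f(A))$. The step that bridges this gap is precisely the interchange of preimage and radical, and this in turn relies on the multiplicativity $f^\#(g^n) = f^\#(g)^n$ of the ring homomorphism together with the Nullstellensatz applied to the variety $A$ in $X$.
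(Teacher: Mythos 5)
The paper does not actually prove this lemma; it cites it as well known (with a footnote to an external reference), so there is no in-paper argument to compare against. Your proof is substantively correct, and the final paragraph in fact contains the whole argument in its cleanest form: $\sqrt{(f^\#)^{-1}(I)}=(f^\#)^{-1}(\sqrt I)$ is a purely ring-theoretic identity for preimages under a ring homomorphism, the Nullstellensatz gives $\sqrt I = I(A)$, and $(f^\#)^{-1}(I(A))=I(f(A))=I(\overline{f(A)})$ by the pointwise computation $f^\#(g)=g\circ f$; applying $V(\cdot)$ yields the lemma.

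One step is stated backwards, though, and you should fix it. In the second paragraph you claim that for the inclusion $V((f^\#)^{-1}(I))\subseteq\overline{f(A)}$ it suffices to show $\sqrt{(f^\#)^{-1}(I)}\subseteq I(f(A))$. It does not: that containment gives $\overline{f(A)}=V(I(f(A)))\subseteq V\bigl(\sqrt{(f^\#)^{-1}(I)}\bigr)=V((f^\#)^{-1}(I))$, which is the same easy inclusion you already proved pointwise in the first paragraph. The containment you actually need is the reverse one, $I(f(A))\subseteq\sqrt{(f^\#)^{-1}(I)}$: if $g$ vanishes on $f(A)$, then $f^\#(g)$ vanishes on $A$, so $f^\#(g)\in I(V(I))=\sqrt I$ by the strong Nullstellensatz, hence $f^\#(g^n)\in I$ for some $n$ and $g\in\sqrt{(f^\#)^{-1}(I)}$. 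This is where the nontrivial direction of the Nullstellensatz enters (your written direction only uses the trivial containment $\sqrt I\subseteq I(V(I))$). Since every implication in your chain is reversible, this is a directional slip rather than a missing idea, but as written the explicit argument proves one inclusion twice and leaves the other to the closing remark.
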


 \begin{Definition}
   Let $k\ge 1$ and let $0 \le d \le N$ be  integers and let $n:= \binom{N}{d}$. 
   Let $I_{\Gr(d,N)} \subseteq \CC[x_1,\ldots, x_n]$ denote the Grassmann--Pl\"ucker ideal.
   Then we define  the ideal 
 \begin{equation}
    I_{k,\Gr(d,N))} = \prod_{ \sigma  } \left( ( f_{\sigma, k}^\#  )^{-1} (   I_{\Gr(d,N)} )  + I_{\Gr(d,N)} \right) \subseteq \CC[x_1,\ldots, x_n] ,
 \end{equation}
   where $ \sigma $ runs over $ \{ -1, +1 \}^n$.
 \end{Definition}

 \begin{Proposition}
  \label{Proposition:SquareableVariety}
   Let $k\ge 2$ and $0 \le d \le N$ be  integers. 
   Then the set 
   \begin{equation}
      \label{eq:VarietySquareable}
        \{ \xi \in \Lambda^d \CC^N :\text{$\xi$ and the $k$th power of $\xi$ up to sign are decomposable}    \} 
   \end{equation}
   is
   a (potentially reducible) subvariety of $\Lambda^d\CC^N$ that is defined by the ideal 
   \begin{equation*}
        I_{k,\Gr(d,N)}.    
   \end{equation*}
   The condition that the $k$th power of $\xi$ up to sign is decomposable 
   means that there is $\xi_k = v_1' \wedge \ldots \wedge  v_d'$ \st the Pl\"ucker coordinates satisfy 
   $\Delta_I(\xi)^k = \pm {\Delta_I(\xi_k)}$.
 \end{Proposition}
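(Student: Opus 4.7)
The plan is to decompose the set of interest as a finite union of subvarieties of $\Lambda^d\CC^N$, one for each sign pattern, and then to exhibit this union as the zero locus of the product ideal $I_{k,\Gr(d,N)}$.

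First, I would reformulate the defining condition set-theoretically. A tensor $\xi \in \Lambda^d\CC^N$ lies in the set if and only if $\xi \in \Gr_\CC(d,N)$ and there exists a sign vector $\sigma \in \{-1,+1\}^n$, with $n = \binom{N}{d}$, such that $f_{\sigma,k}(\xi) \in \Gr_\CC(d,N)$. This exhibits the set as the finite union
\[
 S = \bigcup_{\sigma \in \{-1,+1\}^n} \Bigl( \Gr_\CC(d,N) \cap f_{\sigma,k}^{-1}\bigl(\Gr_\CC(d,N)\bigr) \Bigr).
\]
Each piece is an intersection of two subvarieties of $\CC^n$, hence a subvariety; the finite union of subvarieties is again a subvariety. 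This establishes the first assertion of the proposition.

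Next, for each fixed $\sigma$, I would identify the defining ideal of the corresponding piece. Applying Lemma~\ref{Lemma:MorphismIdealClosure} to the regular map $f_{\sigma,k} : \CC^n \to \CC^n$ and the subvariety $\Gr_\CC(d,N) = V(I_{\Gr(d,N)})$, one sees that the ideal $(f_{\sigma,k}^\#)^{-1}(I_{\Gr(d,N)})$ cuts out $\overline{f_{\sigma,k}(\Gr_\CC(d,N))}$. Since $f_{\sigma,k}$ is a finite morphism over the algebraically closed field $\CC$, this image is closed, and it can be matched — under the natural identification of source and target $\CC^n$ and, if needed, at the level of radicals via the Nullstellensatz — with the preimage condition $f_{\sigma,k}^{-1}(\Gr_\CC(d,N))$ that actually appears in $S$. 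Adding $I_{\Gr(d,N)}$ then enforces $\xi \in \Gr_\CC(d,N)$, so the $\sigma$th piece of $S$ is cut out precisely by $J_\sigma := (f_{\sigma,k}^\#)^{-1}(I_{\Gr(d,N)}) + I_{\Gr(d,N)}$.

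Finally I would appeal to the standard identity $V(J_1 \cdots J_m) = V(J_1) \cup \cdots \cup V(J_m)$ for ideals in a polynomial ring over a field. Applied to the product in the definition of $I_{k,\Gr(d,N)}$, this yields $V(I_{k,\Gr(d,N)}) = \bigcup_\sigma V(J_\sigma) = S$, finishing the proof. The principal obstacle I anticipate is the second step: Lemma~\ref{Lemma:MorphismIdealClosure} naturally describes ideals of image closures, whereas the condition ``$f_{\sigma,k}(\xi) \in \Gr_\CC(d,N)$'' is a preimage condition. Reconciling the two requires carefully exploiting that $f_{\sigma,k}$ is finite and surjective over the algebraically closed field $\CC$, so that its image is itself closed and the two ideal-theoretic descriptions determine the same vanishing locus.
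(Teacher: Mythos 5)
Your overall route is the paper's: decompose over sign vectors $\sigma$, describe the $\sigma$th piece by the ideal $J_\sigma:=(f_{\sigma,k}^\#)^{-1}(I_{\Gr(d,N)})+I_{\Gr(d,N)}$, and conclude with $V(\prod_\sigma J_\sigma)=\bigcup_\sigma V(J_\sigma)$. The paper obtains $V(J_\sigma)=f_{\sigma,k}(\Gr_\CC(d,N))\cap\Gr_\CC(d,N)$ exactly as you indicate, via Lemma~\ref{Lemma:MorphismIdealClosure} together with the closedness of the image (Lemmas~\ref{Lemma:SquaringClosed}--\ref{Lemma:SquaringImageVariety}). The genuine gap is the step you yourself single out as the principal obstacle, and your proposed resolution does not work. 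The contraction $(f_{\sigma,k}^\#)^{-1}(I_{\Gr(d,N)})$ cuts out the \emph{image} $f_{\sigma,k}(\Gr_\CC(d,N))$, whereas the \emph{preimage} $f_{\sigma,k}^{-1}(\Gr_\CC(d,N))$ occurring in your set $S$ is cut out by the \emph{extension}, \ie by the ideal generated by $\{p\circ f_{\sigma,k}:p\in I_{\Gr(d,N)}\}$ (substitute $\sigma_i x_i^k$ into the Grassmann--Pl\"ucker polynomials). These are different ideals defining different varieties, and neither the finiteness of $f_{\sigma,k}$ nor the Nullstellensatz identifies them: finiteness only yields closedness of the image, and passing to radicals cannot equate two reduced varieties with distinct point sets. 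Concretely, in the reduced coordinates of the paper's $\Gr(2,4)$ example with $k=2$ and $\sigma=(+,+,+)$, the preimage of $V(x-y+z)$ under $(x,y,z)\mapsto(x^2,y^2,z^2)$ is $V(x^2-y^2+z^2)$ while the image is $V(x^2-2xy+y^2-2xz-2yz+z^2)$; the point $(4,1,1)=f(2,1,-1)$ lies in the image but not in the preimage.

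What the paper does at this juncture is not your argument: it simply asserts the set-theoretic identity $\{\xi\in\Gr(d,N):\sigma\xi^k\in\Gr(d,N)\}=\{\sigma\xi^k:\xi\in\Gr(d,N)\}\cap\Gr(d,N)$ and thereafter works entirely with the image formulation $\Gr_\CC(d,N)\cap\bigcup_\sigma f_{\sigma,k}(\Gr_\CC(d,N))$, which is the set that $I_{k,\Gr(d,N)}$ genuinely defines. (For a fixed $\sigma$ that identity is itself not an equality of point sets but only says that one set is the $f_{\sigma,k}$-image of the other, so the paper's treatment of this point is also terse; but that is the step your proof must supply, not circumvent.) To close your argument you would need either to prove that, after taking the union over all $\sigma$ and intersecting with $\Gr_\CC(d,N)$, the preimage-formulated set coincides with the image-formulated one, or to replace the contraction by the extension ideal in your description of the $\sigma$th piece and then reconcile the result with the stated definition of $I_{k,\Gr(d,N)}$. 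As written, your step identifying $V(J_\sigma)$ with $\Gr_\CC(d,N)\cap f_{\sigma,k}^{-1}(\Gr_\CC(d,N))$ is unjustified.
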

 Using Theorem~\ref{Theorem:NonRegularMatricesNew}, we can deduce the following corollary.
 \begin{Corollary}
  Let $0 \le d \le N$ and $k\ge 2$ be integers. %
  Then 
 \begin{equation}
     \RGr_\R(d,N) =  V_\CC( I_{k,\Gr(d,N )} )  \cap ( \Lambda^d \R^N / \R^*), 
 \end{equation}
  \ie the real regular Grassmannian is equal to the real part of the variety of complex decomposable tensors whose Pl\"ucker coordinates have a $k$th power, modulo scaling.
 \end{Corollary}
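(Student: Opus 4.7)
The plan is to establish the two inclusions separately, using Proposition~\ref{Proposition:SquareableVariety} as the bridge: it tells us that a tensor lies in $V_\CC(I_{k,\Gr(d,N)})$ exactly when it is decomposable and its $k$th power, up to sign, is again decomposable. Combined with the dichotomy between regular and non-regular matroids provided by Theorem~\ref{Theorem:NonRegularMatricesNewNew} and Theorem~\ref{Proposition:RegularMatrices}, both inclusions become short.

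For the inclusion $\RGr_\R(d,N) \subseteq V_\CC(I_{k,\Gr(d,N)}) \cap (\Lambda^d\R^N/\R^*)$, I would start with a real decomposable tensor $\xi$ representing a regular matroid, arising from some $X\in \R^{d\times N}$. Theorem~\ref{Proposition:RegularMatrices}(i), applied over $\R$, produces $X_k\in \R^{d\times N}$ with $\Delta_I(X)^k = \pm\Delta_I(X_k)$ for every $I\in \binom{[N]}{d}$. The corresponding decomposable tensor $\xi_k\in \Lambda^d\R^N \subseteq \Lambda^d\CC^N$ witnesses the $k$th power condition from Proposition~\ref{Proposition:SquareableVariety}, placing $\xi$ into $V_\CC(I_{k,\Gr(d,N)})$.

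For the converse, let $\xi \in V_\CC(I_{k,\Gr(d,N)}) \cap (\Lambda^d\R^N/\R^*)$. By Proposition~\ref{Proposition:SquareableVariety}, $\xi$ is decomposable in $\Lambda^d\CC^N$ and there exists a decomposable $\xi_k\in \Lambda^d\CC^N$ with $\Delta_I(\xi)^k = \pm\Delta_I(\xi_k)$ for all $I$. The key small observation is that the Grassmann--Pl\"ucker relations (Theorem~\ref{Theorem:GPrelations}) have integer coefficients, so a tensor with real coordinates satisfies them over $\CC$ if and only if it satisfies them over $\R$; hence $\xi$ arises from some $X\in \R^{d\times N}$. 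Applying the same reality argument to $\xi_k$, whose coordinates are real because they equal $\pm\Delta_I(\xi)^k$, we obtain $X_k\in \R^{d\times N}$ with $|\Delta_I(X)|^k = |\Delta_I(X_k)|$ for all $I$. Theorem~\ref{Theorem:NonRegularMatricesNewNew} in its $\R$-version then forces the matroid of $X$ to be regular, giving $\xi \in \RGr_\R(d,N)$.

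The main obstacle, if any, is the small point about descent from $\CC$ to $\R$: one has to observe that both $\xi$ and $\xi_k$ have real Pl\"ucker coordinates and hence, by the rationality of the Grassmann--Pl\"ucker relations, come from real matrices, so that the ordered-field version of Theorem~\ref{Theorem:NonRegularMatricesNewNew} is applicable. Once this is in place, everything else is a direct invocation of the stated theorems.
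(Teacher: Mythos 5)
Your proposal is correct and follows essentially the same route as the paper: the forward inclusion via the regular-case construction (Theorem~\ref{Proposition:RegularMatrices}) and the reverse inclusion by descending from $\CC$ to $\R$ using the fact that the Grassmann--Pl\"ucker relations are defined over $\R$, then invoking Theorem~\ref{Theorem:NonRegularMatricesNewNew}. The paper compresses both steps into citations of the combined Theorem~\ref{Theorem:NonRegularMatricesNew}, but the substance, including the key observation that both $\xi$ and $\xi_k$ have real Pl\"ucker coordinates and hence arise from real matrices, is identical.
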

 \begin{proof}
  The  ``$\subseteq$'' part follows directly from Theorem~\ref{Theorem:NonRegularMatricesNew}.\\
  ``$\supseteq$'': let  $ (\xi_I)_{I\subseteq \binom{[N]}{d}} \in V_\CC( I_{k,\Gr(d,N )} )  \cap ( \Lambda^d \R^N / \R^*)$.
  This implies that $\xi$ has  real entries and there are matrices $ X, X_k \in \CC^{d\times N}$ \st
  $ \xi_I = \Delta_I(X)$ and $(\xi_I)^k = \pm\Delta_I(X_k)$ for all $I\in \binom{[N]}{d}$. 
  Since the Pl\"ucker vectors of $X$ and $X_k$ are real and they satisfy the Grassmann--Pl\"ucker relations,
  by Theorem~\ref{Theorem:GPrelations} it is possible to find real matrices $X'$ and $X_k'$ with the same properties.
   Using Theorem~\ref{Theorem:NonRegularMatricesNew}, we can now deduce that $\xi\in \RGr_\R(d,N)$.
 \end{proof}

 \begin{Lemma}
    \label{Lemma:SquaringClosed}
    The map $f_{\sigma, k} : \CC^n \to \CC^n$ is closed, \ie it maps closed sets to closed sets. 
    Here, we consider the Euclidean topology on $\CC^n$.  
 \end{Lemma}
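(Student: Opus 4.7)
The plan is to reduce the claim to a standard general principle: a continuous proper map between locally compact Hausdorff spaces is closed. First I would observe that the sign part of $f_{\sigma,k}$ is irrelevant: the componentwise multiplication $(p_1,\ldots,p_n)\mapsto(\sigma_1 p_1,\ldots,\sigma_n p_n)$ is a linear bijection $\CC^n\to \CC^n$, hence a homeomorphism, so $f_{\sigma,k}$ is closed if and only if the pure power map $\phi_k:\CC^n\to\CC^n$, $(p_1,\ldots,p_n)\mapsto(p_1^k,\ldots,p_n^k)$, is closed. The map $\phi_k$ is obviously continuous (polynomial), so the only thing to verify is properness.

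For properness I would argue componentwise: if $K\subseteq\CC^n$ is compact, then $K$ is contained in a closed polydisc of some finite radius $R$, i.e.\ $|y_i|\le R$ for $i=1,\ldots,n$ and all $y\in K$. Any point $x\in\phi_k^{-1}(K)$ satisfies $|x_i|^k=|x_i^k|\le R$, so $|x_i|\le R^{1/k}$; thus $\phi_k^{-1}(K)$ is bounded. It is also closed in $\CC^n$ by continuity of $\phi_k$ and closedness of $K$. By the Heine--Borel theorem $\phi_k^{-1}(K)$ is compact, so $\phi_k$ is proper. Since $\CC^n$ is a locally compact Hausdorff space, proper continuous maps are closed, which finishes the proof.

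If one prefers a more self-contained approach avoiding the general topological lemma, one can argue directly with sequences. Given a closed set $A\subseteq\CC^n$ and a sequence $y^{(m)}=f_{\sigma,k}(x^{(m)})$ in $f_{\sigma,k}(A)$ converging to some $y\in\CC^n$, the sequence $(y^{(m)})$ is bounded, hence by the componentwise $k$-th root bound above the preimages $(x^{(m)})$ are bounded too. Bolzano--Weierstrass produces a convergent subsequence $x^{(m_\ell)}\to x$; since $A$ is closed, $x\in A$, and by continuity of $f_{\sigma,k}$ we get $f_{\sigma,k}(x)=y\in f_{\sigma,k}(A)$. I do not anticipate any real obstacle here; the only mild subtlety is to make sure the sign factors $\sigma_i\in\{\pm 1\}$ really do not interfere, which is immediate because they act as a homeomorphism.
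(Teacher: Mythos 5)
Your proposal is correct and follows essentially the same route as the paper: both reduce the claim to properness of $f_{\sigma,k}$ and then invoke the fact that a continuous proper map is closed, checking properness by noting that the preimage of a compact set is bounded and closed. Your write-up merely adds explicit detail (the $|x_i|\le R^{1/k}$ bound, the reduction via the sign homeomorphism, and an optional sequence-based variant) that the paper leaves implicit.
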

 \begin{proof}
  Since every proper map is closed, it is sufficient
  to prove that $f_{\sigma, k}$ is proper, \ie 
  the preimage of every compact set in $\CC^n$ is compact.
  This is easy to see: let $C\subseteq \CC^n$ be compact, \ie it is closed and bounded.
  The preimage of $C$ is clearly bounded and since $f_{\sigma, k}$ is continuous, it is also closed. 
 \end{proof}

 \begin{Lemma} 
   \label{Lemma:SquaringZariskiClosed}
   $ f_{\sigma, k}( \Gr_\CC(d,N) )$ is an irreducible subvariety of $\Lambda^d \CC^N / \CC^* $.
 \end{Lemma}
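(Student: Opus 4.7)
The plan is to prove the statement by addressing closedness and irreducibility separately, after first checking that $f_{\sigma,k}$ descends to a well-defined morphism on projective space.

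First I would observe that $f_{\sigma,k}$ is homogeneous of degree $k$ in the coordinates of $\Lambda^d\CC^N$, and that its components $\sigma_i p_i^k$ vanish exactly when $p_i = 0$; in particular $f_{\sigma,k}$ sends $\Lambda^d\CC^N\setminus\{0\}$ into itself. Consequently it descends to a morphism $\tilde f_{\sigma,k}\colon \Lambda^d\CC^N/\CC^*\to\Lambda^d\CC^N/\CC^*$ of projective spaces, and the set under consideration is just $\tilde f_{\sigma,k}(\Gr_\CC(d,N))$.

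The main step is to show that this image is Zariski-closed. There are two natural routes. The clean, conceptual route is to invoke the fact that a projective variety is complete, so any morphism from it to a separated variety has Zariski-closed image; applied to the projective variety $\Gr_\CC(d,N)$ and the morphism $\tilde f_{\sigma,k}$, this yields the result immediately. The more hands-on route, which is closer in spirit to Lemma~\ref{Lemma:SquaringClosed}, is a two-step argument: by Chevalley's theorem the image of a morphism of varieties is constructible, and on the other hand $\Gr_\CC(d,N)$ is compact in the Euclidean topology (as a closed subset of complex projective space), so by continuity its image is Euclidean-compact and hence Euclidean-closed. For constructible subsets of a quasi-projective variety over $\CC$, being Zariski-closed is equivalent to being Euclidean-closed, so the image is Zariski-closed.

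Irreducibility is then routine: the Grassmannian $\Gr_\CC(d,N)$ is a well-known irreducible projective variety, and the continuous image (in the Zariski topology) of an irreducible space is irreducible; thus $\tilde f_{\sigma,k}(\Gr_\CC(d,N))$ is an irreducible subvariety of $\Lambda^d\CC^N/\CC^*$.

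The main obstacle I foresee is the passage from Euclidean-closedness, which Lemma~\ref{Lemma:SquaringClosed} supplies in the affine setting, to Zariski-closedness in the projective setting; this is resolved either by the completeness of projective varieties or by combining Chevalley's theorem with the standard equivalence of Euclidean-closedness and Zariski-closedness for constructible sets. Everything else is essentially bookkeeping about descent to projective space and the fact that irreducibility is preserved by continuous images.
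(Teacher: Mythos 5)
Your proposal is correct, and your ``hands-on route'' is essentially the argument the paper gives: Chevalley's theorem makes the image constructible, Euclidean-closedness is established separately, and one concludes with the fact that for constructible sets over $\CC$ the Zariski and Euclidean closures coincide; irreducibility follows from Zariski-continuity exactly as you say. The one notable difference within that route is where Euclidean-closedness comes from. The paper works with the \emph{affine cone} $\Gr_\CC(d,N)\subseteq\Lambda^d\CC^N$ cut out by the Grassmann--Pl\"ucker ideal (made explicit in Lemma~\ref{Lemma:SquaringImageVariety}), which is closed but not compact, so it cannot appeal to compactness of the domain; instead it proves in Lemma~\ref{Lemma:SquaringClosed} that $f_{\sigma,k}:\CC^n\to\CC^n$ is proper, hence closed, in the Euclidean topology. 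You pass to projective space instead, where compactness of the Grassmannian does that work; this requires your preliminary check that the components $\sigma_i p_i^k$ are homogeneous of the same degree with no common zero away from the origin, so that $f_{\sigma,k}$ descends to a morphism of $\Lambda^d\CC^N/\CC^*$. Your first, ``conceptual'' route via completeness of projective varieties is genuinely different from and cleaner than what the paper does: it bypasses both Chevalley's theorem and the comparison of topologies entirely. If you take that route you should add the small remark that the affine image is the cone over the projective image (since $f_{\sigma,k}$ is homogeneous and $z\mapsto z^k$ is surjective on $\CC$), so that closedness descends back to the affine cone, which is the setting in which the subsequent ideal computation of Lemma~\ref{Lemma:SquaringImageVariety} takes place.
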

 \begin{proof}
     Since      $f_{\sigma, k}$ is continuous in the Zariski topology, the image of an irreducible set is again irreducible.

     Recall that a constructible set is a finite union of locally closed subsets.
     Since $\CC$ is algebraically closed, 
     $ f_{\sigma, k}( \Gr_\CC(d,N) )$ is a constructible set
     by Chevalley's theorem (\eg \cite[Corollary~I.\S8.2]{mumford-1999}). 
     By Lemma~\ref{Lemma:SquaringClosed}, $ f_{\sigma, k}( \Gr_\CC(d,N) )$ is closed in the Euclidean topology.
     But for constructible subsets of $\CC^n$, the Zariski closure is the same as the closure in the Euclidean topology.
     Hence the set is also Zariski-closed.
 \end{proof}

 \begin{Remark}
  Over arbitrary fields, the image of $f_{\sigma, k}$ is in general not Zariski-closed.  
  For example, let us consider the variety  $V$ of antisymmetric decomposable tensors in $\Lambda^1 \R^1=\R^1$.
  Of course, $V=\R^1$.
  Then $f_{+,2}( V ) = \R_{\ge 0}$. 
 \end{Remark}

 \begin{Lemma}
  \label{Lemma:SquaringImageVariety}
  Let $Z\subseteq \CC^n$ be a Zariski-closed set that is defined by an ideal $ I \subseteq \CC[ x_1,\ldots,x_n ]$.
  Then
  \begin{equation}
     f_{\sigma, k}( Z ) =  V \left( (   f_{\sigma, k}^\#)^{-1} ( I ) \right).
  \end{equation} 
  In particular,   $ f_{\sigma, k}( \Gr_\CC(d,N) ) =  V \big(   (f_{\sigma, k}^\# )^{-1} (   I_{\Gr(d,N)}  ) \big) $.
  Here, $\Gr_\CC(d,N)\subseteq \Lambda^d \CC^N$ denotes the affine variety defined by the Grassmann--Pl\"ucker ideal.
 \end{Lemma}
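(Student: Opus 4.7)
The plan is to reduce the claim to the combination of Lemma~\ref{Lemma:MorphismIdealClosure} and the results from Lemma~\ref{Lemma:SquaringClosed} and Lemma~\ref{Lemma:SquaringZariskiClosed}, suitably adapted from $\Gr_\CC(d,N)$ to an arbitrary Zariski-closed $Z$. The key observation is that Lemma~\ref{Lemma:MorphismIdealClosure} already identifies $V((f_{\sigma,k}^\#)^{-1}(I))$ with the Zariski closure $\overline{f_{\sigma,k}(Z)}$, so the entire content of the lemma amounts to showing that the image $f_{\sigma,k}(Z)$ is already Zariski-closed.

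First, I would apply Lemma~\ref{Lemma:MorphismIdealClosure} directly to $f_{\sigma,k}\colon \CC^n\to\CC^n$ and the subvariety $Z=V(I)$. This gives $\overline{f_{\sigma,k}(Z)}=V((f_{\sigma,k}^\#)^{-1}(I))$ as Zariski-closed subsets of $\CC^n$, so it suffices to establish $f_{\sigma,k}(Z)=\overline{f_{\sigma,k}(Z)}$.

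Next, I would replay the argument used in Lemma~\ref{Lemma:SquaringZariskiClosed} in this more general setting. Since $Z$ is Zariski-closed, Chevalley's theorem (valid because $\CC$ is algebraically closed, cf.\ \cite[Corollary~I.\S8.2]{mumford-1999}) ensures that $f_{\sigma,k}(Z)$ is a constructible subset of $\CC^n$. On the other hand, $Z$ is closed in the Euclidean topology (every Zariski-closed set is), and by Lemma~\ref{Lemma:SquaringClosed} the map $f_{\sigma,k}$ is closed in the Euclidean topology, so $f_{\sigma,k}(Z)$ is Euclidean-closed. The standard fact that a constructible subset of $\CC^n$ which is Euclidean-closed is also Zariski-closed then yields $f_{\sigma,k}(Z)=\overline{f_{\sigma,k}(Z)}$, completing the proof of the first equality. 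The statement for the Grassmannian follows immediately by specialising to $Z=\Gr_\CC(d,N)$ and $I=I_{\Gr(d,N)}$.

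The only potential subtlety is verifying that the properness argument of Lemma~\ref{Lemma:SquaringClosed} applies to the restriction of $f_{\sigma,k}$ to $Z$; but since closedness of a map is preserved by restriction to a closed subspace of the source, and since $Z$ itself is closed in $\CC^n$ (in both topologies), there is nothing further to check. Hence the main obstacle is purely bookkeeping: spelling out the topology switch between the Zariski and Euclidean pictures correctly, which is a well-known consequence of Chevalley's theorem over $\CC$.
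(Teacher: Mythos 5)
Your proof is correct and follows essentially the same route as the paper, which simply combines Lemma~\ref{Lemma:MorphismIdealClosure} with Lemma~\ref{Lemma:SquaringZariskiClosed}. You rightly note that the closedness argument (Chevalley's theorem plus Euclidean-closedness of $f_{\sigma,k}$ from Lemma~\ref{Lemma:SquaringClosed}) must be replayed for an arbitrary Zariski-closed $Z$, since Lemma~\ref{Lemma:SquaringZariskiClosed} as stated only covers the Grassmannian --- a detail the paper's one-line proof glosses over.
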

 \begin{proof}
  This follows by combining Lemma~\ref{Lemma:MorphismIdealClosure} and Lemma~\ref{Lemma:SquaringZariskiClosed}.
 \end{proof}

\begin{proof}[Proof of Proposition~\ref{Proposition:SquareableVariety}]
   Recall that for two ideals $I$ and $J$, $V(I+J) = V(I)\cap V(J)$ and $V( I \cdot  J) = V( I) \cup V(J)$ holds.
   Let us fix $\sigma\in \{-1,+1\}^{\binom{[N]}{d}}$. 
   Then by Lemma~\ref{Lemma:SquaringImageVariety},
  \begin{equation}
       f_{\sigma,k} ( \Gr(d,N) ) = \{ \sigma \xi^k :  \xi \in \Gr(d,N)  \} = V( (f^\#_{\sigma,k})^{-1} ( I_{\Gr(d,N)} ) ),
  \end{equation}
  where $\sigma \xi^k := ( \sigma_I \cdot (\xi_I)^k    )_{ I \subseteq \binom{[N]}{d} }$. 
         \begin{align}
       \text{Hence }
       \{  \xi \in  \Gr(d,N) :  \sigma \xi^k \in \Gr(d,N)  \}  & = \{ \sigma \xi^k :  \xi \in \Gr(d,N)  \} \cap \Gr(d,N)
       \\ &
       \notag   
       =  V\left( (f^\#_{\sigma,k})^{-1} ( I_{\Gr(d,N)} ) + I_{\Gr(d,N)} \right).
  \end{align}
  From this we can deduce that the set that we are interested in, 
    $ \Gr_\CC(d,N) \cap \bigcup_\sigma f_{\sigma,k} ( \Gr_\CC(d,N) ) $, is defined by the given ideal.
\end{proof}

\begin{Example}
 Let us calculate $I_{2,\Gr(2,4)}$ using Singular \cite{singular-410}.
 The Grassmann--Pl\"ucker ideal is generated by the polynomial 
 $m_{12}m_{34} - m_{13}m_{24} + m_{14}m_{23}$. To simplify notation, we set
 $x := m_{12}m_{34}$, $y := m_{13}m_{24}$, and $z := m_{14}m_{23}$ 
 and consider the ideal $I:=(x-y+z)\subseteq \CC[x,y,z]$.
 Since $\sigma$ and $-\sigma$ yield the same ideal, it is sufficient to
 use only $4$ out of the $2^3=8$ possible sign patterns.
 \begin{verbatim}
> ring R = 0, (x,y,z), dp; ideal I = x-y+z; 
> map phi2a = R,x2,y2,z2; ideal J2a = preimage(R,phi2a,I); J2a;
J2a[1]=x2-2xy+y2-2xz-2yz+z2
> map phi2b = R,-x2,-y2, z2; ideal J2b = preimage(R,phi2b,I); 
> map phi2c = R,-x2, y2,-z2; ideal J2c = preimage(R,phi2c,I);
> map phi2d = R, x2,-y2,-z2; ideal J2d = preimage(R,phi2d,I);  
> ideal K = std( std(I+J2a)*std(I+J2b)*std(I+J2c)*std(I+J2d) );
 \end{verbatim}

The ideal $K$ is rather complicated, it is generated by
\begin{align*}
 K &= \bigl(x^{4}-4x^{3}y+6x^{2}y^{2}-4xy^{3}+y^{4}+4x^{3}z-12x^{2}yz+12xy^{2}z-4y^{3}z
 \\ & \qquad \quad
 +6x^{2}z^{2} -12xyz^{2}+6y^{2}z^{2}+4xz^{3}-4yz^{3}+z^{4},
 \displaybreak[2]
 \\
 & \qquad
x^{3}z^{2}-3x^{2}yz^{2}+3xy^{2}z^{2}-y^{3}z^{2}+3x^{2}z^{3}-6xyz^{3}+3y^{2}z^{3}+3xz^{4}-3yz^{4}+z^{5},
 \displaybreak[1]\\
 & \qquad
x^{3}yz-3x^{2}y^{2}z+3xy^{3}z-y^{4}z+3x^{2}yz^{2}-6xy^{2}z^{2}+3y^{3}z^{2}+3xyz^{3}-3y^{2}z^{3}+yz^{4},
 \displaybreak[1]\\
 & \qquad
x^{3}y^{2}-3x^{2}y^{3}+3xy^{4}-y^{5}+3x^{2}y^{2}z-6xy^{3}z+3y^{4}z+3xy^{2}z^{2}-3y^{3}z^{2}+y^{2}z^{3},
 \displaybreak[1]\\
 & \qquad
x^{2}z^{4}-2xyz^{4}+y^{2}z^{4}+2xz^{5}-2yz^{5}+z^{6},
 \displaybreak[1]\\
 & \qquad
x^{2}yz^{3}-2xy^{2}z^{3}+y^{3}z^{3}+2xyz^{4}-2y^{2}z^{4}+yz^{5},
 \displaybreak[1]\\
 & \qquad
x^{2}y^{2}z^{2}-2xy^{3}z^{2}+y^{4}z^{2}-x^{2}yz^{3}+4xy^{2}z^{3}-3y^{3}z^{3}-2xyz^{4}+3y^{2}z^{4}-yz^{5},
 \displaybreak[1]\\
 & \qquad
x^{2}y^{3}z-2xy^{4}z+y^{5}z+2xy^{3}z^{2}-2y^{4}z^{2}-x^{2}yz^{3}+2xy^{2}z^{3}-2xyz^{4}+2y^{2}z^{4}-yz^{5},
 \displaybreak[1]\\
 & \qquad
x^{2}y^{4}-2xy^{5}+y^{6}+2xy^{4}z-2y^{5}z+y^{4}z^{2},
 \displaybreak[1]\\
 & \qquad
xy^{2}z^{4}-y^{3}z^{4}-xyz^{5}+2y^{2}z^{5}-yz^{6},
 \displaybreak[1]\\
 & \qquad
xy^{4}z^{2}-y^{5}z^{2}-2xy^{3}z^{3}+3y^{4}z^{3}+xy^{2}z^{4}-3y^{3}z^{4}+y^{2}z^{5},
 \displaybreak[1]\\
 & \qquad
xy^{5}z-y^{6}z+y^{5}z^{2}-2xy^{3}z^{3}+2y^{4}z^{3}-2y^{3}z^{4}+xyz^{5}-y^{2}z^{5}+yz^{6},
 \displaybreak[1]\\
 & \qquad
y^{6}z^{2}-3y^{5}z^{3}+4y^{4}z^{4}-3y^{3}z^{5}+y^{2}z^{6} \bigr).
%
%
\end{align*}
\begin{align}
\sqrt{K} &= ( x-y+z,\, y^{4}z-2y^{3}z^{2}+2y^{2}z^{3}-yz^{4}) \displaybreak[2]\\
V(K) &= \Bigg\{ (1,0,-1),\, (1,1,0),\, (0,1,1),\, \left(\frac 12 + \frac{\sqrt{3}}{2}i,\, 1,\, \frac 12 - \frac{\sqrt{3}}{2}i \right),
  \displaybreak[1]
  \notag\\ & \qquad \quad
     \left(\frac 12 - \frac{\sqrt{3}}{2}i,\, 1,\, \frac 12 + \frac{\sqrt{3}}{2}i \right)
        \Bigg\} \, \CC.
 \notag
\end{align}
 As described above, $V(K)\subseteq \CC^3$ is the image of 
 the subvariety of $ \CC^6$ defined in  \eqref{eq:VarietySquareable}
 under the map 
\begin{equation}
 (m_{12},m_{13},m_{14},m_{23},m_{24},m_{34}) \mapsto (m_{12}m_{34}, m_{13}m_{24}, m_{14}m_{23}).
\end{equation} 
The variety $V(K)$ can be found using Singular by distinguishing the two cases $y=0$ and $y \neq 0$. For example, to obtain the 
four solutions that satisfy $y=1$,  we used the commands
\begin{verbatim}
> ideal T1 = std(K), y-1; solve(T1);
\end{verbatim} 
Let us finish by constructing a matrix $X$ whose Pl\"ucker coordinates are projected to
$p := \left(\frac 12 + \frac{\sqrt{3}}{2}i,\, 1,\, \frac 12 - \frac{\sqrt{3}}{2}i \right)$.
Let $\sigma=(-1,1,-1)$.
We note that $p$ is contained in $f_{\sigma,2}( V(x-y+z) ) = V( x-y+z, x^2 + xy + z^2 )$
and $f_{\sigma,2}(p) = ( \frac 12 - \frac{\sqrt 3}{2}i, 1, \frac{1}{2} + \frac{\sqrt{3}}{2} i)$.
We choose the following Pl\"ucker coordinates in its preimage:
$m_{12}=m_{13}=m_{14}=m_{24}=1$, $m_{23} = \frac{1}{2} - \frac{\sqrt{3}}{2}i$, and $m_{34}=\frac{1}{2} + \frac{\sqrt{3}}{2}i$.
We may assume that the first two columns of $X$ form a diagonal matrix and that the top-left entry is equal to $1$.
This leads to the following matrix $X$ and the matrix $X_2$ that corresponds to the coordinate-wise square of $p$:

\begin{equation}
  \label{equation:ComplexU24squaring}
  X = \begin{pmatrix}
       1 & 0 & -\frac 12 + \frac{\sqrt 3}{2} i & -1 \\
       0 & 1 &        1                      &  1
      \end{pmatrix}
 \qquad
  X_2 = \begin{pmatrix}
       1 & 0 &  -\frac 12 - \frac{\sqrt 3}{2}i & -1 \\
       0 & 1 &        1                      &  1
      \end{pmatrix}.
\end{equation}

\end{Example}

\section{Background on matroids and arithmetic matroids}
\label{Section:Background}

 In this section we will introduce the main objects of this paper,
 matroids and arithmetic matroids.

\subsection{Matroids}

A \emph{matroid} is a pair $(E,\rank)$, where $E$ denotes a finite set
and the rank function $\rank : 2^E \to \Z_{\ge 0}$ satisfies the following axioms:
\begin{itemize}
 \item $0\le \rank(A) \le \abs{A}$ for all $A\subseteq E$, 
 \item $A\subseteq B \subseteq E$ implies $\rank(A) \le \rank(B)$, and
 \item  $\rank(A\cup B) + \rank(A\cap B) \le \rank(A) + \rank(B)$  for all $A,B\subseteq E$. 
\end{itemize}
 A standard reference on matroid theory is Oxley's book \cite{MatroidTheory-Oxley}.

 Let $\K$ be a field and let  $E$ be a finite set, \eg $E=[N]:=\{1,\ldots, N\}$.
 A finite list of vectors $X=(x_e)_{e\in E}$ in $\K^d$ 
 defines a matroid in a canonical way:
 the ground set is $E$  and the rank function 
 is the rank function from linear algebra.
 A matroid that  can be obtained 
 in such a way is called representable over $\K$. Then the list $X$ is called a representation of the matroid.
 Of course,  a list of vectors $X=(x_e)_{e\in E}$ is essentially the same as a matrix
   $X\in \K^{d \times \abs{E}}$ whose columns are indexed by $E$.

The \emph{uniform matroid} $U_{r, N}$ of rank $r$ on $N$ elements is the matroid on the ground set $[N] $,  whose rank function
is given by  $\rank(A)=\min( \abs{A}, r)$ for all $A\subseteq [N]$.

 A graph $\Gcal=(V,E)$ that
 may contain loops and multiple edges defines a \emph{graphic matroid}. Its ground set is the set $E$ of edges of the graph and the rank of a set of edges is defined as the cardinality
of a maximal subset that does not contain a cycle.  Graphic matroids can be represented over any field by an oriented vertex-edge incidence matrix of the graph.

\subsection{Arithmetic matroids}
\label{Subsection:AriMatroids}
 Arithmetic matroids capture many combinatorial and topological properties of toric arrangements 
 in a similar way as matroids carry information about the corresponding hyperplane arrangements.
\begin{Definition}[D'Adderio--Moci, Br\"and\'en--Moci
\cite{branden-moci-2014,moci-adderio-2013}]
 An arithmetic matroid is a triple $(E, \rank, m)$, where
 $(E, \rank)$ is a matroid and $m : 2^E \to \Z_{\ge 1}$ denotes the
 \emph{multiplicity function}
 that satisfies the following axioms:
 \begin{itemize}
   \item[(P)]  Let $R\subseteq S\subseteq E$. The set $[R,S]:=\{A: R\subseteq A\subseteq S\}$ is called a \emph{molecule} 
 if $S$ can be written as the disjoint union $S=R\cup F_{RS}\cup T_{RS}$ and for each $A\in [R,S]$,
 $\rank(A)= \rank(R) + \abs{A \cap F_{RS}}$ holds.
 
 For each molecule   $[R,S]$, the following inequality holds:
 \begin{equation}
  \label{eq:Paxiom}
  \rho(R,S) := (-1)^{\abs{T_{RS}}} \sum_{A\in [R,S]} (-1)^{\abs{S} - \abs{A} } m(A) \ge 0.
 \end{equation}
  \item[(A1)]  For all $A \subseteq E$ and $e \in E$:
 if $\rank(A \cup \{e\}) = \rank(A)$, then $m(A \cup \{e\}) \big| m(A)$.
 Otherwise, $m(A) \big| m(A \cup \{e\})$.
  \item[(A2)]  If $[R, S]$ is a molecule and $S=R\cup F_{RS}\cup T_{RS}$, then 
  \begin{equation}
     m(R)m(S) = m(R \cup F_{RS} ) m(R \cup T_{RS} ).
  \end{equation}
\end{itemize}

\end{Definition}

 The prototypical example of an arithmetic matroid 
 is defined by a list of vectors $X = (x_e)_{e\in E} \subseteq\Z^d$.
 In this case, for a subset  $S\subseteq E$ of cardinality $d$ that defines a basis, we have $m(S) = \abs{\det(S)}$
 and in general $m(S) := \abs{ (\left\langle  S\right\rangle_\R \cap \Z^d) / \left\langle S\right\rangle}$. 
 Here, $\langle S \rangle\subseteq \Z^d$ denotes the subgroup generated  by $\{x_e: e\in S\}$ and $\langle S \rangle_\R\subseteq \R^d$
 denotes the subspace spanned by the same set.
 
 Below, we will see that representations of contractions of arithmetic matroids are contained in a quotient of the ambient group.
 As quotients of $\Z^d$ are in general not free groups, 
 we will work in the slightly more general setting of finitely generated abelian groups.
 By the fundamental theorem of finitely generated abelian groups, every finitely generated abelian group $G$ is isomorphic
 to $\Z^d \oplus \Z_{q_1} \oplus \ldots \Z_{q_n}$ for  suitable non-negative integers $d, n, q_1,\ldots, q_n$.
 There is no canonical isomorphism $G \cong \Z^d \oplus \Z_{q_1} \oplus \ldots \Z_{q_n}$.
  However, $G$ has a uniquely determined subgroup $G_t \cong \Z_{q_1} \oplus \ldots \Z_{q_n}$ consisting of all the torsion elements.
 There is a free group
 $\latproj{G} := G/ G_t \cong \Z^d$.
 For $X\subseteq G$, we will write $\latproj{X}$ to denote the image of $X$ in $\latproj{G}$.

Note that a finite list of vectors $X=(x_e)_{e\in E} \subseteq \Z^d \oplus \Z_{q_1} \oplus \ldots \Z_{q_n}$ can be 
identified with an $(d + n)\times \abs{E}$ matrix, where each column corresponds to one of the vectors. 
The first $d$ rows of the matrix consists of integers and the entries of the remaining rows are contained in certain cyclic groups.

\begin{Definition}
 Let $\Acal = (E, \rank, m)$ be an arithmetic matroid.
 Let $G$ be a finitely generated abelian group and let $X = (x_e)_{e\in E}$ be a list of elements of $G$.
 For $A\subseteq E$, let  $G_A$ denote the maximal subgroup of $G$ \st $ \abs{G_A / \left\langle A \right\rangle}$ is finite.
 Again, $\left\langle A \right\rangle \subseteq G$ denotes the subgroup generated by $\{x_e:e\in A\}$.

  $ X \subseteq G $ is called a \emph{representation} of $\Acal$ if the matrix $\bar X\subseteq \bar G$ represents the matroid $(E,\rank)$ 
  and 
  $m(A) = \abs{G_A / \left\langle A \right\rangle}$ for all $A\subseteq E$.
  The arithmetic matroid $\Acal$ is called \emph{representable} if it has a representation $X$.
  Given a  list $X = (x_e)_{e\in E}$  of elements of a finitely generated abelian group $G$, we will
  write $\Acal(X)$ to denote the arithmetic matroid $(E,\rank_X,m_X)$ represented by $X$.
\end{Definition}

 Unfortunately, allowing the representation to be contained in an arbitrary finitely generated abelian group
 makes certain statements and proofs more complicated.
 A reader with little  knowledge about arithmetic matroid should first consider the case  of representations being
 contained in $\Z^d$. This setting captures the most interesting parts of the theory.

 An arithmetic matroid $\Acal = (E, \rank, m)$ is called \emph{torsion-free} if 
 $m(\emptyset)=1$. If $\Acal$ is representable, then it can be represented by a list of vectors in a lattice, \ie a finitely generated abelian group that is torsion-free.

\subsection{Hermite normal form}
\label{Subsection:HNF}
 We say that a matrix $X\in \Z^{d\times N}$ of rank $r$ ($r \le d\le N$) is in \emph{Hermite normal form} 
 if for all $i \in [r]$, $0 \le x_{ij} < x_{jj}$ for $i<j$ and $x_{ij}=0$ for $i>j$, \ie
 the first $r$ columns of $X$ form an upper triangular matrix and the diagonal elements are strictly
 bigger than the other elements in the same column.
 It is not completely trivial, but well-known,  that any matrix $X \in \Z^{d\times N}$  %
 can be brought into Hermite normal form
 by permuting the columns and multiplying it from the left with a unimodular matrix $U\in \GL(d, \Z)$  
 (\cite[Theorem~4.1 and Corollary~4.3b]{schrijver-TheoryLinearIntegerProgramming}).
 Since such a multiplication does not change the arithmetic matroid represented by the matrix, 
 we will be able to assume that a representation $X$ of a torsion-free arithmetic matroid $\Acal$ is in Hermite normal form.

 \subsection{Restriction and contraction}
 \label{Subsection:RestrictionContraction}
 
 It is possible to extend the matroid operations restriction and contraction to arithmetic matroids
 \cite{moci-adderio-2013}.
 Let $\Acal = (E, \rank, m)$ be an arithmetic matroid.
 Let $A\subseteq E$.  
 The \emph{restriction} $\Acal|_A$ is the arithmetic matroid 
 $\Acal|_A = (A,\rank|_A, m|_A)$, where  $\rank|_A$ and $m|_A$ denote the restrictions of $\rank$ and $m$ to $A$.
 The \emph{contraction} $\Acal / A$ is the
 arithmetic matroid   $(E\setminus A,\rank_{/A}, m_{/A})$, where
 $\rank_{/A}(B):=\rank(B\cup A) - \rank(A)$ and
  $ m_{/A}(B) := m(B\cup A)$ for $B\subseteq E\setminus A$.
 Recall that for a matroid $\Mcal=(E,\rank)$ and $A\subseteq E$, restriction and contraction are defined as 
 $\Mcal|_A:=(A, \rank|_A)$ and $\Mcal/A := (E\setminus A, \rank_{/A})$.

  If an arithmetic matroid $(E, \rank , m)$ is represented by a list $X = (x_e)_{e\in E}$ of elements of $G$,
  it is easy to check that the restriction corresponds to the arithmetic matroid represented by the list  
  $X|_A := (x_e)_{e\in A}$. The contraction $\Acal/A$ is represented by the sublist 
  $X/A := (\bar x_e)_{e\in E\setminus A}$ consisting of the images of $ (x_e)_{e\in E\setminus A} $ under the canonical projection
  $ G  \twoheadrightarrow G / \langle A \rangle $ (cf.~\cite[Example~4.4]{moci-adderio-2013}).

\begin{Example}
 Let us consider the matrix
 $ X = \begin{pmatrix} 1 & 1 & 2 \\ 0 & 3 & 4 \end{pmatrix}$.
 What happens if we contract the last column?
 Since we are mainly interested in the underlying arithmetic matroid, 
 we will first apply a unimodular transformation \st the last column only contains one 
 non-zero entry (cf.~Subsection~\ref{Subsection:HNF}) and consider the resulting matrix $X'$. 
 Then we obtain
 $ X' = \begin{pmatrix} 1 & 1 & 2 \\ -2 & 1 & 0 \end{pmatrix} =(x_1,x_2,x_3)$. 
  Hence $ X'/x_3 = \begin{pmatrix} \bar 1 & \bar 1  \\ -2 & 1  \end{pmatrix} \subseteq \Z_2 \oplus \Z$ and
 $\latproj{X} = (-2,1)\subseteq \Z$ is the image of $ X'/x_3$ under the projection to the free group obtained from $\Z^2/\left\langle x_3\right\rangle$ by taking the quotient by
 its torsion subgroup.
\end{Example}
 \subsection{Regular matroids}
 \label{Subsection:RegularMatroids}
 
  Recall that a matroid is \emph{regular} if it can be represented over every field,
 or equivalently, if it can be represented by a totally unimodular matrix.
 There are several other  characterisations of regular matroids.
 \begin{Theorem}[Seymour \cite{seymour-1980}, see also {\cite[Theorem~13.2.4]{MatroidTheory-Oxley}}]
 \label{Theorem:SeymourRegular}

  Every regular matroid $M$ can be constructed by means of direct sums, $2$-sums, and $3$-sums starting with matroids each of which is isomorphic to a minor 
  of $M$, and each of which is either graphic, cographic, or isomorphic to $R_{10}$.
 \end{Theorem}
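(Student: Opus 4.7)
The statement is Seymour's decomposition theorem for regular matroids, one of the deepest structural results in matroid theory; the original proof runs to many pages, so in the present paper I would expect it to appear as a cited black box rather than be reproven. That said, the high-level strategy is as follows. I would proceed by induction on $\abs{E(M)}$, with the base cases being the small graphic, cographic, and $R_{10}$ matroids. The induction step splits according to the connectivity of $M$: if $M$ is not connected, it is a direct sum of strictly smaller regular matroids and we are done by induction; if $M$ is connected but not $3$-connected, a standard argument shows it is a $2$-sum of two strictly smaller regular matroids (regularity is preserved under taking minors, and $2$-sums of regular matroids are regular), so induction again applies.

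The core of the theorem is therefore the $3$-connected case. Here the plan is to analyse whether $M$ has a minor isomorphic to the $12$-element matroid $R_{12}$. The easier of the two subcases is the absence of an $R_{12}$-minor: the intermediate result I would rely on is that any $3$-connected regular matroid with no $R_{12}$-minor is either graphic, cographic, or equal to $R_{10}$. This in turn is proved using splitter arguments, chiefly that $R_{10}$ is a splitter for the class of regular matroids with no $R_{12}$-minor, together with Tutte's excluded-minor characterisation of regular matroids (no $U_{2,4}$, $F_7$, $F_7^*$ minors) to constrain the $3$-connected single-element extensions that can occur.

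The main obstacle, and what makes Seymour's proof genuinely difficult, is the presence of an $R_{12}$-minor in a $3$-connected regular $M$. In that case one must upgrade the natural $3$-separation of $R_{12}$ (which exhibits $R_{12}$ itself as a $3$-sum) to a $3$-separation of $M$ compatible with the $3$-sum operation, and then verify that the two parts are again regular and strictly smaller. This requires delicate connectivity machinery: analysis of bridges of a $3$-separator, control of how the $3$-separation of the minor extends under single-element extensions and coextensions, and careful bookkeeping to ensure the pieces produced are regular and not merely binary. For the present paper I would simply invoke Seymour's theorem as a black box via the cited reference, since the combinatorial engine is entirely external to the questions about Pl\"ucker coordinates and arithmetic matroids being studied here.
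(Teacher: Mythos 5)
The paper does not prove this statement: it is Seymour's decomposition theorem, quoted verbatim from \cite{seymour-1980} (see also Oxley) as background, exactly as you propose to treat it. Your black-box citation matches the paper's approach, and your sketch of the actual proof (induction via connectivity, the $R_{10}$ splitter argument, and the $R_{12}$ three-sum analysis) is a faithful summary of the external argument.
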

 A direct sum  is the matroidal analogue of glueing two graphs in a vertex, a $2$-sum
 corresponds to glueing two graphs in an edge and 
 the $3$-sum is corresponds to glueing two graphs in a $K_3$.
 $R_{10}$ denotes a certain rank $5$ matroid on $10$ elements.

The following   result and its corollary are used in the proofs of 
Theorem~\ref{Theorem:NonRegularMatricesNewNew}, 
Theorem~\ref{Theorem:NonRegularNonRepresentable}, and
Proposition~\ref{Proposition:IdealRegularGrassmannian}.
\begin{Theorem}[{\cite[Theorem~6.6.4]{MatroidTheory-Oxley}}]
  \label{Theorem:RegularExcludedMinors}
   A matroid is regular if and only if it has no minor isomorphic to $U_{2,4}$, the Fano matroid or its dual.
\end{Theorem}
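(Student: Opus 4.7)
The plan is to handle the two directions separately. The forward direction (regular matroid implies no minor among $U_{2,4}$, $F_7$, $F_7^*$) is the easy half. The class of regular matroids is closed under taking minors, since deleting or contracting a column of a totally unimodular matrix (after suitable pivoting for contraction) yields another totally unimodular matrix. So it suffices to verify that none of the three excluded matroids is itself regular. For $U_{2,4}$ this is immediate: $\mathbb{F}_2^2$ contains only three non-zero vectors, so no four pairwise non-parallel vectors exist and $U_{2,4}$ is not even binary, let alone regular. The Fano matroid $F_7$ is representable only over fields of characteristic $2$, so it is not representable over $\R$ and hence not regular. Finally, a matroid is regular if and only if its dual is, so $F_7^*$ is non-regular as well.

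The reverse direction is the substantive content, and I would prove it in two stages. First establish Tutte's characterisation of binary matroids: a matroid is representable over $\mathbb{F}_2$ if and only if it has no $U_{2,4}$ minor. The idea is, given $M$ with no $U_{2,4}$ minor, to fix a basis $B$ and define an $\mathbb{F}_2$-matrix whose columns are the characteristic vectors of the fundamental circuits with respect to $B$. One then checks that absence of a $U_{2,4}$ restriction on every four-element rank-two subset is exactly what forces the resulting circuit system to agree with that of the defined matrix; this is best carried out by circuit-exchange arguments.

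Second, for a binary matroid $M$ with no $F_7$ or $F_7^*$ minor, I would show $M$ is regular by exhibiting a totally unimodular signing of a binary representation. Start from a standard $\mathbb{F}_2$-representation $[I_d \mid A]$ and try to lift each $1$ entry of $A$ to $\pm 1$ so that every square submatrix of the lifted matrix has determinant in $\{0,\pm 1\}$. The obstruction to extending such a partial signing across the bipartite ``non-basis'' graph associated with $A$ can be analysed locally on induced substructures of rank $\le 3$; the minor-minimal obstructions are exactly $F_7$ and $F_7^*$. Therefore, in their absence a valid signing exists globally, giving a totally unimodular matrix representing $M$ and establishing regularity.

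The hard step will clearly be the second stage: proving that $F_7$ and $F_7^*$ are the only local obstructions to signing a binary matrix to totally unimodular form requires a careful inductive argument on the rank or size of $M$, typically handling the $3$-connected case separately and reducing smaller connectivity via direct sums and $2$-sums. An alternative, which trades one hard theorem for another, would be to invoke Seymour's decomposition theorem (Theorem~\ref{Theorem:SeymourRegular}) as a black box: one shows separately that graphic matroids, cographic matroids, and $R_{10}$ are regular and contain none of $U_{2,4}$, $F_7$, $F_7^*$ as minors, and that the $1$-, $2$-, and $3$-sum operations preserve both regularity and the excluded-minor property; but then the real work is hidden inside Seymour's theorem itself.
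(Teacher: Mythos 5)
You should first note that the paper does not prove this statement at all: it is Tutte's excluded-minor characterisation of regular matroids, quoted from Oxley's book and used as a black box (e.g.\ via Corollary~\ref{Corollary:RegularExcludedMinors} in the proofs of Theorems~\ref{Theorem:NonRegularMatricesNewNew} and~\ref{Theorem:NonRegularNonRepresentable}). So the comparison is between your outline and the known proofs in the literature. Your forward direction is correct and essentially complete: regularity is minor-closed (deletion and pivoting-plus-deletion preserve total unimodularity), $U_{2,4}$ is not even binary since $\F_2^2$ has only three non-zero vectors, $F_7$ is representable only in characteristic $2$, and regularity is self-dual.

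The reverse direction, however, is a description of a proof rather than a proof, and it contains two genuine gaps. Stage two is where the theorem actually lives: the assertion that $F_7$ and $F_7^*$ are the only minor-minimal obstructions to signing a binary representation into a totally unimodular matrix is a restatement of the theorem, not an argument for it, and the claim that the obstruction ``can be analysed locally on induced substructures of rank $\le 3$'' is not correct as stated --- the known arguments (Tutte's homotopy theorem, Gerards' re-signing proof, Truemper's approach) all require a global induction, typically reducing to the $3$-connected case and using a chain/splitter-type theorem. Your fallback via Seymour's decomposition also fails in the direction you need it: Theorem~\ref{Theorem:SeymourRegular} decomposes matroids that are \emph{already known} to be regular, so it says nothing about a matroid merely assumed to avoid $U_{2,4}$, $F_7$ and $F_7^*$; moreover, in the standard development Tutte's theorem is logically prior to (and an ingredient of) Seymour's theorem, so invoking the latter here would be circular. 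Stage one (binary $\Leftrightarrow$ no $U_{2,4}$ minor) is a genuine and provable theorem of Tutte, but the decisive step --- that excluding $U_{2,4}$ forces the fundamental-circuit incidence matrix over $\F_2$ to have the same circuits as $M$ --- is exactly where the work lies and is only gestured at.
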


 The Fano matroid is the rank $3$ matroid that is represented by the list of all $7$ non-zero vectors in $(\F_2)^3$.
 The Fano matroid and its dual are representable over a field $\K$ if and only if this field has characteristic $2$  \cite[Proposition~6.4.8]{MatroidTheory-Oxley}.
 Hence we immediately obtain the following corollary.

\begin{Corollary}
  \label{Corollary:RegularExcludedMinors}
  Let $M$ be a matroid that is representable over a field $\K$ whose characteristic is not $2$.
  Then $M$ is regular if and only if it has no minor isomorphic to $U_{2,4}$.  
\end{Corollary}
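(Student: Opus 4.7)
The plan is to deduce this corollary directly from Theorem~\ref{Theorem:RegularExcludedMinors} together with the representability fact stated immediately before it, namely that the Fano matroid $F_7$ and its dual $F_7^*$ are representable over $\K$ if and only if $\characteristic(\K)=2$.

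The forward implication is immediate: if $M$ is regular then by Theorem~\ref{Theorem:RegularExcludedMinors} it has no minor isomorphic to $U_{2,4}$, $F_7$ or $F_7^*$, in particular no $U_{2,4}$ minor.

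For the converse, I would argue by contraposition on the full excluded minor characterisation. Suppose $M$ is representable over $\K$ with $\characteristic(\K)\neq 2$ and has no $U_{2,4}$ minor. It is a standard fact (which I would invoke as background on representable matroids, \eg \cite[Proposition~3.2.4]{MatroidTheory-Oxley}) that the class of matroids representable over a fixed field is closed under taking minors: if $X$ represents $M$ over $\K$ then contractions and restrictions of $X$ yield representations of the corresponding minors over the same field $\K$. Hence every minor of $M$ is representable over $\K$. Since $\characteristic(\K)\neq 2$, neither $F_7$ nor $F_7^*$ can occur as a minor of $M$, for otherwise we would obtain a representation of $F_7$ or $F_7^*$ over a field of characteristic different from $2$. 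Combined with the hypothesis that $M$ has no $U_{2,4}$ minor, this shows that $M$ has none of the three excluded minors listed in Theorem~\ref{Theorem:RegularExcludedMinors}, so $M$ is regular.

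There is no real obstacle here; the corollary is essentially a bookkeeping consequence of Theorem~\ref{Theorem:RegularExcludedMinors}. The only nontrivial input beyond what is already stated in the excerpt is the closure of the class of $\K$-representable matroids under minors, which is standard and can be cited rather than proved.
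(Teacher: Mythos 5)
Your argument is correct and is exactly the reasoning the paper intends: it derives the corollary from Theorem~\ref{Theorem:RegularExcludedMinors} together with the fact that the Fano matroid and its dual are representable only in characteristic $2$, using minor-closure of $\K$-representability (which the paper leaves implicit and you rightly make explicit).
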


\section{Proofs of the main results on non-regular matroids}
\label{Section:Proofs}

 In this  section  we will prove 
 Theorem~\ref{Theorem:NonRegularMatricesNewNew}
 and
 Theorem~\ref{Theorem:NonRegularNonRepresentable}.
 The main ingredient of the proofs is the Grassmann--Pl\"ucker relation (cf.~Theorem~\ref{Theorem:GPrelations}) between the minors of a $(2\times 4)$-matrix.
 Recall that for a matrix $X\in \K^{2\times 4}$  
 \begin{equation}
   \label{eq:SimpleGP}
   \Delta_{12} \Delta_{34}  - \Delta_{13} \Delta_{24} + \Delta_{14}\Delta_{23} = 0 
 \end{equation}
 holds, where
 $\Delta_{ij}$ denotes the determinant of the columns $i$ and $j$ of $X$. 
 If columns $i$ and $j$ of $X\in \Z^{2\times 4}$ are linearly independent, 
 then the multiplicity function of the arithmetic 
 matroid $\Acal(X)$ satisfies  $m_X(\{i,j\}) = \abs{\Delta_{ij}}$.
 We can immediately deduce from the Grassmann--Pl\"ucker relation
 that there is no representable arithmetic matroid with underlying matroid $U_{2,4}$ whose 
 bases all have multiplicity one.

 Recall that for a matrix
 $X\in \K^{d\times N}$ ($ d\le N$) and  $I\in \binom{[N]}{d}$, $\Delta_I=\Delta_I(X)$ denotes the minor corresponding to the columns indexed by $I$.
 
 \begin{Lemma}
 \label{Lemma:U24}
  \begin{enumerate}[(i)]
   \item  Let $\Acal=(E, \rank, m)$ be a representable arithmetic matroid with underlying matroid $U_{2,4}$.
  Then for any non-negative integer  $k\neq 1$, $\Acal^k:=(E,\rank, m^k)$ %
  is not representable.
   \item Let $X \in \R^{2\times 4}$ be a matrix that represents the matroid $U_{2,4}$, 
  \ie the columns of $X$ are pairwise linearly independent.
    Let $k$ be a non-negative real number that satisfies $k\neq 1$.
  Then there is no $X_k \in \R^{2\times 4}$ 
  \st for each maximal minor $\Delta_I(X)$, the corresponding minor $\Delta_{I}(X_k)$
   satisfies $\abs{\Delta_I(X)}^k = \abs{\Delta_I(X_k)}$.   
   
   If $k$ is an integer, then the same statement holds over any ordered field $\K$.
  \end{enumerate}

 \end{Lemma}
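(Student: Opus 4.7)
The plan is to prove (ii) first and then reduce (i) to it. For (ii), the central idea is to apply the Grassmann--Pl\"ucker relation \eqref{eq:SimpleGP} simultaneously to $X$ and to the hypothetical $X_k$. Since $X$ represents $U_{2,4}$, all six minors $\Delta_{ij}(X)$ are nonzero, so the three products
\begin{equation*}
 a := \abs{\Delta_{12}(X)\Delta_{34}(X)},\; b := \abs{\Delta_{13}(X)\Delta_{24}(X)},\; c := \abs{\Delta_{14}(X)\Delta_{23}(X)}
\end{equation*}
are strictly positive, and the signed identity $\Delta_{12}\Delta_{34} - \Delta_{13}\Delta_{24} + \Delta_{14}\Delta_{23} = 0$ forces one of $\{a,b,c\}$ to be the sum of the other two. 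After relabelling, I may assume \WLOG\ that $c = a+b$. The hypothesis $\abs{\Delta_{ij}(X_k)} = \abs{\Delta_{ij}(X)}^k$ together with multiplicativity of $\abs{\cdot}^k$ over products yields the corresponding triple $(a^k, b^k, c^k)$ for $X_k$, and \eqref{eq:SimpleGP} applied to $X_k$ forces one of these three powers to be the sum of the other two.

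A short case analysis then finishes (ii) over $\R$. Because $c > \max(a,b) > 0$ and $k \ge 0$, we have $c^k \ge \max(a^k, b^k)$, which immediately excludes the possibilities $a^k = b^k + c^k$ and $b^k = a^k + c^k$. So the only option is $(a+b)^k = c^k = a^k + b^k$. For real $k > 0$ with $k \ne 1$, strict convexity (if $k > 1$) or strict concavity (if $0 < k < 1$) of $t \mapsto t^k$ on $(0, \infty)$ rules this out for $a, b > 0$; for $k = 0$ it reads $1 = 2$, impossible. The main obstacle is transferring the inequality $(a+b)^k \ne a^k + b^k$ to an arbitrary ordered field $\K$ for integer $k \ge 2$, where convexity is not directly at hand. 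There I would replace the convexity argument by the binomial identity
\begin{equation*}
  (a+b)^k - a^k - b^k = \sum_{j=1}^{k-1}\binom{k}{j}\, a^j b^{k-j},
\end{equation*}
whose right-hand side is a sum of strictly positive elements of $\K$ (a product of positives, with a positive integer coefficient) and is therefore positive; for $k = 0$ the check $1 \ne 2$ again works in any ordered field.

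Finally, for (i), I would deduce non-representability of $\Acal^k$ from (ii) by reducing to the setting of integer matrices. In the torsion-free case, if $X$ represents $\Acal$ and $Y$ represents $\Acal^k$, then after projecting onto the rank-two saturation of each column span I obtain integer matrices $\bar X, \bar Y \in \Z^{2 \times 4}$ representing $U_{2,4}$ with $m(\{i,j\}) = \abs{\Delta_{ij}(\bar X)}$ and $m(\{i,j\})^k = \abs{\Delta_{ij}(\bar Y)}$; thus $X_k := \bar Y$ contradicts (ii) applied over $\R$ with the integer $k$. In the torsion case, the multiplicity of a pair is no longer literally the absolute value of a $(2 \times 2)$-minor of the torsion-free quotient, but the same argument can be run by carefully tracking the torsion contributions, which affect the pairs in a controlled way and preserve the scaling structure of the triple $(a, b, c)$ that drives the GP-based contradiction.
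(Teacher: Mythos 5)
Your proposal is correct and follows essentially the same route as the paper: the Grassmann--Pl\"ucker relation for $2\times 4$ minors forces one of the three products of complementary minors to equal the sum of the other two, and strict convexity/concavity (resp.\ the binomial theorem over an ordered field, resp.\ the parity check for $k=0$) rules out the same additive identity for the $k$th powers. The only point where you are sketchier than the paper is the torsion case of (i); the precise fact needed is $m(\{i,j\}) = \abs{G_t}\cdot\abs{\Delta_{ij}(\latproj{X})}$ (the paper's Lemma~\ref{Lemma:TorsionIndependent}), which multiplies each of the three products by the global factor $\abs{G_t}^2$ and therefore, as you assert, preserves the ``one is the sum of the other two'' structure, so one reruns the arithmetic on the multiplicity products rather than literally invoking (ii).
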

 \begin{proof}
   We will start with the proof of the first statement.
   We assume that $E=[4]$.
   Let $ X \subseteq G $ denote a representation of $\Acal$ in a finitely generated abelian group $G$ and let $\latproj{X}$ denote its projection to the free group $G/G_t$.
   We can identify $\latproj{X}$ with a matrix in $\Z^{2\times 4}$.
   Let $\Delta_{ij}$ denote the determinant  of the columns $i$ and $j$ of $\latproj{X}$. 
   It is easy too see (see also Lemma~\ref{Lemma:TorsionIndependent})  that $m_{ij}:=m_\Acal(\{i,j\}) = \abs{G_t} \cdot \abs{\Delta_{ij}} $.
   From the  
   Grassmann--Pl\"ucker relation \eqref{eq:SimpleGP} it follows that
   \begin{equation}
    \label{equation:GP}
    \underbrace{\sigma_1 m_{12}m_{34}}_{\alpha} + \underbrace{\sigma_2 m_{13}m_{24}}_{\beta} + \underbrace{\sigma_3 m_{14}m_{23}}_{\gamma} = 0
   \end{equation}
   for suitable  $\sigma_1, \sigma_2, \sigma_3\in \{ -1, 1 \}$. %
   By the pigeonhole principle, at least two out of $\alpha$, $\beta$, and $\gamma$ have the same sign.
   Hence, without loss of generality, we may assume that $\beta \gamma> 0$.
   Furthermore, we can assume $\alpha>0$. Otherwise, we just multiply \eqref{equation:GP} by $-1$, which leaves the product $\beta\gamma$ unchanged.
   Hence   $\alpha = - (\beta + \gamma) = \abs{\beta+\gamma}$.
      This implies
   \begin{equation}
   \label{eq:alphabetagamma}
     \alpha^k =   \abs{\beta + \gamma}^k = (\abs{\beta} + \abs{\gamma})^k.
   \end{equation}

 Suppose there is a   list of vectors $X_k$ contained in a finitely generated abelian group $G'$ that represents $\Acal^k$.
 Now we proceed as above:
 let $\latproj{X'}$ denote the projection of $X'$ to the free group $G'/G'_t$.
  We can identify $\latproj{X'}$ with a matrix in $\Z^{2\times 4}$.
  Let $\Delta'_{ij}$ denote the determinant  of the columns $i$ and $j$ of $\latproj{X'}$. 
   Then  $m_{ij}^k = m_{\Acal^k}(\{i,j\}) = \abs{G'_t} \cdot \abs{\Delta'_{ij}} $.
 This implies for $k\ge 2$:
   \begin{equation}
   \begin{split}
  0 &=  \underbrace{m_{12}^km_{34}^k}_{\alpha^k} \pm \underbrace{  m_{13}^k m_{24}^k}_{\abs{\beta}^k} \pm \underbrace{  m_{14}^k m_{23}^k}_{\abs{\gamma}^k} 
  = \alpha^k \pm \abs{\beta}^k \pm \abs{\gamma}^k 
  \\ &
  \stackrel{\eqref{eq:alphabetagamma}}{=} (\abs{\beta} + \abs{\gamma})^k \pm \abs{\beta}^k \pm  \abs{\gamma}^k
  \ge 
   (\abs{\beta} + \abs{\gamma})^k - \abs{\beta}^k -  \abs{\gamma}^k
\stackrel{k>1}{>} 0. %
  \end{split}
   \label{eq:kGP}
   \end{equation}
  This is a contradiction.
  The last step holds because the function $f(x)=x^k$ is strictly convex for $x \ge 0$ and $k>1$.
  It is known that for functions $f : [0,\infty)\to \R$ that satisfy $f(0)=0$, convexity implies superadditivity, 
  \ie 
  $f(\abs{\beta} + \abs{\gamma}) >  f(\abs{\beta}) + f(\abs{\gamma})$.

   For $k=0$, we obtain  $0 =  \alpha^0 \pm \abs{\beta}^0 \pm \abs{\gamma}^0  = 1 \pm 1 \pm 1$. 
   But $1 \pm 1 \pm 1$ is odd, which is a contradiction. This finishes the proof of the first statement.

 \medskip
   The proof of the second statement over $\R$ is very similar.
   Let $\Delta_{ij}$ denote the determinant  of the columns $i$ and $j$ of $X$. 
   We define $m_{ij}:=  \abs{\Delta_{ij}} \neq 0$.
   For $k>1$ and $k=0$, we can proceed as above.
   The case $0< k<1$ is slightly more complicated.
   We start as in \eqref{eq:kGP} and obtain
 \begin{equation}
  0 =  \underbrace{m_{12}^km_{34}^k}_{\alpha^k} \pm \underbrace{  m_{13}^k m_{24}^k}_{\abs{\beta}^k} \pm \underbrace{  m_{14}^k m_{23}^k}_{\abs{\gamma}^k}
  \stackrel{\eqref{eq:alphabetagamma}}{=} (\abs{\beta} + \abs{\gamma})^k \pm \abs{\beta}^k \pm  \abs{\gamma}^k.
 \end{equation}
   Now we distinguish three cases:
  if both signs are negative, then  $(\abs{\beta} + \abs{\gamma})^k - \abs{\beta}^k -  \abs{\gamma}^k < 0$ since
   $0 < k < 1$:
    the function $f(x)=x^k$ is strictly concave for $x \ge 0$ and $0<k<1$.
  It is known that for functions $f : [0,\infty)\to \R$ that satisfy $f(0)=0$, concavity implies subadditivity, 
  \ie 
  $f(\abs{\beta} + \abs{\gamma}) <  f(\abs{\beta}) + f(\abs{\gamma})$.

   If exactly one sign is positive (\WLOG\ the first), we obtain
   \begin{equation}
      \underbrace{(\abs{\beta} + \abs{\gamma})^k  -  \abs{\gamma}^k}_{> 0} + \abs{\beta}^k  > \abs{\beta}^k >  0.
   \end{equation}
   The case that both signs are positive can be reduced to the second, since $(\abs{\beta} + \abs{\gamma})^k + \abs{\beta}^k +  \abs{\gamma}^k >
   (\abs{\beta} + \abs{\gamma})^k + \abs{\beta}^k -  \abs{\gamma}^k$.
   In all three cases we have reached a contradiction.

   The proof of the second statement over an ordered field $\K$ is also similar to the proof of the first statement.
   For $k=0$, it is clear. For $k\ge 2$, we proceed as above.
   In the last step, the convexity argument cannot be used to prove that
   $(\abs{\beta} + \abs{\gamma})^k - \abs{\beta}^k -  \abs{\gamma}^k > 0$.
   Instead, we can use the binomial theorem.
 \end{proof}

   \begin{proof}[Proof of Theorem~\ref{Theorem:NonRegularMatricesNewNew}]
     We will consider the two cases $\K=\R$ and $\K$ an ordered field at the same time.
     Let $E$ be a set of cardinality $N$ and let $X=(x_e)_{e\in E}\subseteq \K^d$ be a list of vectors that spans $\K^d$. 
     Suppose there is a list of vectors $X_k=(x_e')_{e\in E}$ \st $ \abs{ \Delta_I(X) }^k = \abs{ \Delta_I(X_k) }$
     for all $ I \in \binom{ E }{ d }$.
     By Corollary~\ref{Corollary:RegularExcludedMinors} and since ordered fields always have characteristic $0$, both $X$ and $X_k$ must have a $U_{2,4}$ minor.
     In fact, since  $X$ and $X_k$ define the same labelled matroid, there must be disjoint subsets $I,J\subseteq E$ \st
      $A := (X/J)|_I$ and 
     $A_k := (X_k/J)|_I$ represent $U_{2,4}$.
     These two matrices will allow us to reach a contradiction using  Lemma~\ref{Lemma:U24}.

     Note that $X|_J$ spans a subspace of dimension at most $d-2$.
     Let $J_0 \subseteq J$ be a  basis for this subspace.
     Furthermore, let $K_0\subseteq E \setminus (I \cup J)$ be   minimal   \st $J_0 \cup I \cup K_0$ has full rank.
     After a change of basis, encoded by a matrix $T\in \GL(d,\K)$, we can assume that $X$ has the following shape:
\begin{equation}
 TX =
\begin{blockarray}{rrrrr} %
 I & J_0 & J \setminus J_0 & K_0 & \\
\begin{block}{(r|r|c|r|r)}
 A  &  0  & *   &  0 & * & \\ \BAhline
 *  &  B  & *   &  0 & * \\ \BAhline
 *  &  0  & 0   &  C & *\\ 
\end{block}
\end{blockarray}\,,
\end{equation}
 where $A\in \K^{2\times 4}$, $B\in \K^{\abs{J_0} \times \abs{J_0}}$, $C \in \K^{\abs{K_0} \times \abs{K_0}}$, $0$ denotes zero matrices and $*$ denotes arbitrary
 matrices of suitable dimensions.
     Let $i,j\in I$, $i\neq j$. Since $A$ represents $U_{2,4}$,
     $\{i,j\}\cup J_0 \cup K_0$ is a basis for the matroid represented by $X$ and
     $  \Delta_{\{i,j\}\cup J_0 \cup K_0}(X) \det(T)  =   \Delta_{ij}(A) \det(B) \det(C)$.

 As both matrices represent the same matroid, $\{i,j\}\cup J_0 \cup K_0$ is also a basis 
 for $X_k$
 and $(X_k)|_{J}$ spans a space of the same dimension as $X|_J$.
  Hence after a change of basis, encoded by a matrix $T_k\in \GL(d,\K)$, we can assume that $X_k$ has the following shape:
\begin{equation}
 T_k X_k =
\begin{blockarray}{rrrrr} %
 I & J_0 & J \setminus J_0 & K_0 & \\
\begin{block}{(c|c|c|r|r)}
 A_k  &  0  & *   &  0 & * & \\ \BAhline
 *  &  B_k  & *   &  0 & * \\ \BAhline
 *  &  0  & 0   &  C_k & *\\ 
\end{block}
\end{blockarray}\,.
\end{equation}
 Hence $\Delta_{\{i,j\}\cup J_0 \cup K_0}(X_k) \det(T_k) =  \Delta_{ij}(A_k) \det(B_k) \det(C_k) $.
 By assumption, we have  
 \begin{equation*}
 \abs{\Delta_{\{i,j\}\cup J_0 \cup K_0}(X_k)} = \abs{\Delta_{\{i,j\}\cup J_0 \cup K_0}(X)}^k.
 \end{equation*}
 We can deduce that
 there is  $\kappa\in \K^*$ \st
 $ \kappa \abs{\Delta_{ij}(A_k)} =  \abs{\Delta_{ij}(A)}^k$ 
 for all $i,j\in I$, $i\neq j$.
 Now let $A_k' \in \K^{2\times 4}$ be the matrix that is obtained from $A_k$ by scaling the first   row by the factor $\kappa$.
 Then  $ \abs{\Delta_{ij}(A_k')} = \kappa \abs{\Delta_{ij}(A_k)} =  \abs{\Delta_{ij}(A)}^k$ for all $i,j\in I$, $i\neq j$.
 The existence of $A_k'$ and $A$ is  
 a contradiction to Lemma~\ref{Lemma:U24}.
\end{proof}
 
 \begin{proof}[Proof of Theorem~\ref{Theorem:NonRegularNonRepresentable}]
  By assumption, there is a matrix $X$ that represents $\Acal = (E, \rank, m)$.
  Suppose there is a matrix $X_k$ that represents $\Acal^k = (E, \rank, m^k)$.
  Both matrices are indexed by $E$.
     By Corollary~\ref{Corollary:RegularExcludedMinors}, the matroid $(E,\rank)$ must have a $U_{2,4}$ minor.
  This means that there are disjoint subsets $I,J\subseteq E$ \st
  $ A := (X/J)|_I  $ has the underlying matroid $U_{2,4}$. %
  Let $A_k  :=  (X_k/J)|_I $.
  Then $A_k$ represents the matroid $\Acal(A)^k$: both have the same underlying matroid
  and for $S\subseteq I$, $m_{A_k}(S) = m_{X_k}(S\cup J) = (m_{X}(S\cup J))^k = (m_{A}(S))^k$.
  By Lemma~\ref{Lemma:U24}, this is a contradiction.
 \end{proof}

 \section{Proof of Proposition~\ref{Proposition:NonRegularMatricesNewNewBestPossible}}
 \label{Section:SupplementaryResults}
  In this section we will prove
  Proposition~\ref{Proposition:NonRegularMatricesNewNewBestPossible}. 
  The proof is rather elementary and 
  independent of most of the rest of the paper.
  \begin{proof}[Proof of Proposition~\ref{Proposition:NonRegularMatricesNewNewBestPossible}]
   Let $\K$ be a field and $a\in \K$. We define the two matrices
    \begin{equation}
    \label{equation:CounterexampleMatrices}
    X(a) := \begin{pmatrix}
          1 & 0 & 1 & 1 \\ 0 & 1 & 1 & a 
        \end{pmatrix}
      \quad\text{and}\quad 
    X_k(a) := \begin{pmatrix}
          1 & 0 & 1 & 1 \\ 0 & 1 & 1 & a^k 
        \end{pmatrix}.
   \end{equation}
  These two matrices will serve as counterexamples for suitable choices of $a$.
  Since we want that both  $X(a)$ and $X_k(a)$ represent $U_{2,4}$, we require that $a$ and $a^k$ are not contained in $\{0,1\}$.
  Then for $X(a)$, ${\Delta_{12}} = {\Delta_{13}} = {\Delta_{23}} = {\Delta_{24}} = \pm 1$, ${\Delta_{14}}=a$, and ${\Delta_{34}}={a-1}$.
  The two matrices provide a counterexample
   if $ \Delta_{34}(X(a))^k = \pm \Delta_{34}(X_k(a))$ holds. For the other minors, the condition is always satisfied.
  Hence we need to check whether the following equation has a solution:
  \begin{equation} 
  \label{equation:Counterexample}
    (a-1)^k = \pm (a^k - 1).
  \end{equation}
  
    Let us   consider the case $\characteristic(\K)=2$ first.
    Over $\F_2$, Theorem~\ref{Theorem:NonRegularMatricesNewNew} is trivially false.
    If $\K\neq \F_2$, the set $\K\setminus\{0,1\}$ is non-empty 
    and for $k=2$, \eqref{equation:Counterexample} is satisfied for any $a\in \K$.

    Now let us consider the case  $\characteristic(\K)= p \ge 3$.
    Then $a=2 \not\in \{0,1\}$. 
    Let $k = \lambda (p-1) + 1$ for some integer $\lambda\ge 1$.
    By Fermat's little theorem  $a^{p-1}=1$. Hence we obtain  $a^k=a=2 $ and $(a-1)^k = 1 = a^k-1$.

    If we choose $k=3$ and the negative sign 
    in \eqref{equation:Counterexample}, we obtain the equation
    $2 a^3 - 3 a^2 + 3 a - 2 = 0 $.
    Over $\CC$, it has the solutions $a=1$ and $a= \frac 14( 1 \pm \sqrt{15}i)$. This yields a counterexample.
    A counterexample over $\CC$ for $k=2$ is given in \eqref{equation:ComplexU24squaring}.
    
   More generally, we are able to construct   counterexamples  for algebraically closed fields of characteristic $p\ge 0$
   for $k\ge 3$ if $p\neq 2$ and $p$ does not divide $k$.
    It is sufficient to show  that $f(a) := (a-1)^k + (a^k -1) = 0$ has a solution different from $0$ or $1$.
    This automatically implies that $a^k$ is different from $0$ or $1$.
   Recall that a polynomial $f$ has a double root in $x_0$ if and only if $f$ and $f'$ have $x_0$ as a root.
   The derivative is defined formally here by $(x^l)'=l x^{l-1}$.
   Note that $f'(a) = k(a-1)^{k-1} + k a^{k-1} = 2ka^{k-1} + \ldots$. 
   Since $p$  does not divide $2k$, this is a polynomial of degree $k-1$   and it is clear that for $ a \in \{ 0, 1\}$, it assumes the value $\pm k \neq 0$.
   Hence $0$ and $1$ are simple roots of $f$ and since $ k \ge 3 $ and $ \K $ is algebraically closed, $f$ must have another root different
   from $0$ and $1$.

   Now let us consider the case $\K=\R$ and $k$ a negative real number.
  If we choose the negative sign,  \eqref{equation:Counterexample} is equivalent to
   \begin{equation}
    f(a):= \left(\frac{a}{ (a-1)}\right)^s  - a^s = - 1, 
   \end{equation}
   where $s:=-k > 0$. Note that the function $f(a)$ is continuous in $a$ for $a\neq 1$.
   $\lim_{a \searrow 1} f(a) = + \infty$ and $\lim_{a \to \infty} f(a) = - \infty$.  Hence by the 
   intermediate value theorem, there 
   must be an $a\in (1,\infty)$ \st $f(a) = -1$.
 \end{proof}

  \section{On regular matroids}
  \label{Section:RegularMatroids}
  \subsection{Representations of regular matroids are unique}
  A key ingredient of the proofs of 
  several of our results
  is the fact that representations of regular matroids are unique, up to certain natural transformations.
  So in particular, every matrix that represents a regular matroid can be expressed as a transformation of a
  totally unimodular matrix.
  
  Let $\K$ be a field. 
  Let $X \in \K^{d\times N}$ and let
  $D,P \in \GL(N, \K)$, where $D$ denotes a non-singular diagonal matrix and $P$ a permutation matrix.
  Let $T \in \GL(d,\K)$ and let $\psi : \K \to \K$ be a field automorphism.
  We will also use the letter $\psi$ to 
  denote the map $\K^{d\times N}\to \K^{d\times N}$ that applies 
  the field automorphism $\psi$ to each entry of a matrix. 
  Recall that $\Q$ and $\R$ do not have any non-trivial field automorphisms.  Complex conjugation is a field automorphism of $\CC$ and 
  further wild automorphisms can be constructed using the axiom of choice.
  It is easy to see that $X$ and $ \psi (T\cdot X \cdot D \cdot P)$ represent the same matroid.
  
  Let $M$ be a matroid of rank $d \ge 1$ on $N$ elements. Let $X_1,X_2\in \K^{d\times N}$ denote matrices that both represent $M$.
  We say that $X_1$ and $X_2$ are \emph{equivalent representations} of $M$, if
  $d\in \{1,2\}$, or $d \ge 3$ and there are $D,P,T$, and $\psi$ as above \st
  $X_2 =  \psi (T\cdot X_1 \cdot P \cdot D )$. 
  A rank $0$ matroid is represented by a zero matrix with the appropriate number of columns.
  This case is not relevant for us and we will always assume that the rank of our matroids is at least $1$.

 \begin{Theorem}[\cite{brylawski-lucas-1976},{\cite[Corollary~10.1.4]{MatroidTheory-Oxley}}]
  \label{Theorem:RegularMatroidsUniqueRep}
    Let $M$ be a regular matroid of rank $d\ge 1$ on $N$ elements and let $\K$ be a field. 
    Then  all representations of $M$ by a $(d\times N)$-matrix over $\K$ are equivalent.
 \end{Theorem}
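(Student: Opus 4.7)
My plan is to reduce the two given representations to a common standard form using the $\GL(d,\K)$ and column-permutation freedom, then invoke the essentially unique totally unimodular representation of a regular matroid to match them by diagonal scaling.

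\textbf{Step 1 (Standard form).} Fix a basis $B \subseteq [N]$ of $M$. For any representation $X \in \K^{d\times N}$ of $M$, the $d\times d$ submatrix of $X$ indexed by $B$ is invertible, so after left-multiplying by its inverse (using some $T \in \GL(d,\K)$) and permuting columns to place $B$ first (using some permutation matrix $P$), we obtain $X = [\,I_d \mid A\,]$ with $A \in \K^{d\times(N-d)}$. Applied to both $X_1$ and $X_2$, it remains to show that if $[\,I_d \mid A_1\,]$ and $[\,I_d \mid A_2\,]$ both represent $M$, then $A_2$ is obtained from $A_1$ by row and column scaling — the row scaling being absorbed on the first $d$ columns by a compensating column scaling that keeps $I_d$ unchanged.

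\textbf{Step 2 (Matroid-determined support).} The non-zero pattern of $A$ in standard form is fixed by $M$: the entry $A_{ij}$ is non-zero if and only if $(B \setminus \{b_i\}) \cup \{e_j\}$ is a basis of $M$, where $b_i$ is the $i$-th element of $B$ and $e_j$ the $j$-th element of $E \setminus B$. Hence $A_1$ and $A_2$ have identical supports.

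\textbf{Step 3 (Rescaling via total unimodularity).} Since $M$ is regular, it admits a totally unimodular representation whose standard form $[\,I_d \mid A_{\mathrm{TU}}\,]$ has entries in $\{-1, 0, 1\}$. The core claim is that \emph{every} standard-form representation of $M$ over $\K$ satisfies $A = D_r \, A_{\mathrm{TU}} \, D_c$ for suitable non-singular diagonal matrices $D_r \in \GL(d,\K)$ and $D_c \in \GL(N-d,\K)$. Granted this for both $A_1$ and $A_2$, the product of one relation with the inverse of the other yields the desired diagonal equivalence, and after absorbing $D_r$ into a compensating column scaling on the $I_d$ block we obtain $X_2 = T X_1 P D$ with $\psi = \mathrm{id}$. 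The automorphism $\psi$ is never needed here because the $\{-1,0,1\}$ entries of $A_{\mathrm{TU}}$ are fixed by every field automorphism of $\K$.

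\textbf{Main obstacle.} The difficult point is the core claim in Step~3: that over an arbitrary field $\K$, the non-zero entries of a standard-form representation of a regular $M$ are determined up to row and column scaling. A structural route is via the Seymour decomposition (Theorem~\ref{Theorem:SeymourRegular}): verify the claim for graphic matroids (where the oriented incidence matrix is essentially unique up to reversing edge orientations and relabelling vertices), deduce it for cographic matroids by matroid duality, check $R_{10}$ directly, and then show that direct sums, $2$-sums and $3$-sums preserve the property. A more elementary route proceeds by induction on $N$, using the signed $2\times 2$ relations forced by fundamental circuits of length four together with connectivity of the circuit hypergraph to propagate the diagonal rescaling across the full matrix.
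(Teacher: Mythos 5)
Your Steps 1 and 2 are fine, and the bookkeeping at the end of Step 3 (composing the two relations $A_i = D_r^{(i)} A_{\mathrm{TU}} D_c^{(i)}$ and absorbing the row scaling into the identity block) is the right way to finish. But the ``core claim'' you isolate --- that every standard-form representation $[\,I_d \mid A\,]$ of $M$ over $\K$ equals $D_r\,A_{\mathrm{TU}}\,D_c$ --- \emph{is} the theorem; everything before and after it is routine normalization. You do not prove it: you name two candidate strategies and carry out neither. (For what it is worth, the paper does not prove this statement either; it quotes it from Brylawski--Lucas and Oxley, and the classical argument is essentially your ``more elementary route''.)

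To close the gap along that route you need two concrete ingredients, neither of which appears. First, a forcing lemma: if $i,j$ index rows and $e,f$ index non-basis columns with all four entries $A_{ie},A_{if},A_{je},A_{jf}$ non-zero, then $A_{ie}A_{jf}-A_{if}A_{je}=0$ is forced, because otherwise the rank-two minor of $M$ on $\{b_i,b_j,e,f\}$ (contract the rest of $B$) would have all six pairs independent, \ie be a $U_{2,4}$ minor, contradicting regularity via Theorem~\ref{Theorem:RegularExcludedMinors}; this pins down $A_{jf}$ from the other three entries. Second, a propagation argument: normalize the entries on a spanning forest of the bipartite support graph of $A$ (rows versus columns, edges at non-zero entries) to agree with $A_{\mathrm{TU}}$ by row and column scaling, then show every off-forest entry is determined --- which requires that this graph be connected on each connected component of $M$ and an induction to handle chords whose fundamental cycle has length greater than four. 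The Seymour-decomposition route you also sketch is disproportionate and not actually easier: one must prove that $2$-sums and $3$-sums preserve unique representability, which is a substantial lemma in its own right, and even your proposed base case is off --- uniqueness of the oriented incidence matrix ``up to reversing orientations and relabelling vertices'' is uniqueness within a special class of matrices, not uniqueness of arbitrary $\K$-representations of a graphic matroid, which is again the unproven core claim in that special case.
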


 Recall that a matrix $ A\in \K^{d\times N} $ is \emph{totally unimodular} if all its minors are $0$, $1$, or $-1$.
 Of course, in characteristic $2$, $ -1=1$ holds, but the same definition is used.

\begin{Corollary}
\label{Corollary:BLrepresentationOfMatroid}
  Let $\K$ be a field and let $ X_1, X_2 \in \K^{d\times N}$ be two matrices that represent the same matroid of rank  $d \ge 3$.
  Then there exists  
  a non-singular diagonal matrix $D\in \GL(N,\K)$, a matrix $T \in \GL(d,\K)$, a permutation matrix $P\in \GL(d,\K)$  and a field automorphism $\psi : \K\to \K$
  \st
  $ X_1 = \psi( T X_2 D)$.

\end{Corollary}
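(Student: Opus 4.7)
The approach is to read this corollary as an almost-immediate consequence of Theorem~\ref{Theorem:RegularMatroidsUniqueRep}, the Brylawski--Lucas uniqueness theorem for regular matroids. The corollary sits inside the subsection on regular matroids, and uniqueness of the form asserted can only hold in the regular case, so I would understand the matroid common to $X_1$ and $X_2$ to be regular; without that hypothesis there is nothing to prove, since e.g.\ $U_{2,4}$ has many inequivalent representations.

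First I would apply Theorem~\ref{Theorem:RegularMatroidsUniqueRep} to $X_1$ and $X_2$, concluding that they are equivalent representations of the common regular matroid. Unpacking the definition of equivalence given in the paragraph immediately preceding that theorem (and using the hypothesis $d \ge 3$ to avoid the trivial cases $d \in \{1,2\}$), this produces a $T_0 \in \GL(d,\K)$, a column permutation matrix $P_0 \in \GL(N,\K)$, a non-singular diagonal $D_0 \in \GL(N,\K)$, and a field automorphism $\psi : \K \to \K$ with $X_1 = \psi(T_0 \cdot X_2 \cdot P_0 \cdot D_0)$. I would then put the right-hand side into the shape displayed in the corollary by commuting the column permutation past the diagonal: for any permutation $P_0$ and diagonal $D_0$, one has $P_0 D_0 = D_0' P_0$ where $D_0' := P_0 D_0 P_0^{-1}$ is again a non-singular diagonal matrix, its entries being those of $D_0$ permuted according to $P_0$. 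Setting $T := T_0$, $D := D_0'$, $P := P_0$, and keeping $\psi$ as above, matches the data declared in the statement.

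There is no substantive obstacle here; the corollary is essentially a bookkeeping repackaging of Theorem~\ref{Theorem:RegularMatroidsUniqueRep}. The only points requiring a moment's care are that the new diagonal matrix $D_0'$ is still non-singular and diagonal (immediate from the conjugation identity, since conjugation by a permutation matrix merely permutes the diagonal entries), and that the field automorphism $\psi$, being applied entrywise, is a ring homomorphism on matrices and therefore commutes with matrix multiplication from any side, so it may be placed on the outermost layer without any loss. The closest thing to a subtle point is simply making sure that the pieces $T$, $P$, $D$, $\psi$ end up in the exact positions specified in the statement, which is purely a matter of rewriting.
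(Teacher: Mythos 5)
Your proposal is correct and follows essentially the same route as the paper: the paper's proof likewise just invokes Theorem~\ref{Theorem:RegularMatroidsUniqueRep} and unpacks the definition of equivalent representations, ending with $X_1 = \psi(T\cdot X_2\cdot P\cdot D)$. You are also right that the hypothesis ``regular'' is tacitly assumed (the statement omits the word but the proof needs it), and your extra step of conjugating the permutation past the diagonal matrix is a harmless tidying that the paper does not even bother to carry out.
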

\begin{proof}
    Let $M$ be the matroid represented by $X_1$ and $X_2$.
    By Theorem~\ref{Theorem:RegularMatroidsUniqueRep}, $X_1$ and $X_2$ are equivalent representations of $M$.
    Hence there are matrices
    $D,P \in \GL(N, \K)$ ($P$ permutation matrix and $D$ non-singular diagonal matrix), 
    $T \in \GL(d,\K)$ and a field automorphism $\psi : \K \to \K$ 
    \st $X_1 =   \psi (T \cdot X_2 \cdot P \cdot D )$.
\end{proof}

 \begin{Corollary}
 \label{Corollary:BLrepresentationOfRegularMatroid}
  Let $ X \in \K^{d\times N}$ be a matrix that represents a regular matroid $M$ of rank $d \ge 1$.
 Then there exists a totally unimodular matrix $A \in \K^{d\times N}$ that represents $M$,  
  a non-singular diagonal matrix $D\in \GL(N,\K)$,
  and a matrix $T\in \GL(d,\K)$ 
  \st
  $X =  T A D$.
\end{Corollary}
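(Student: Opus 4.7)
The plan is to reduce this to the uniqueness-of-representation statement in Corollary~\ref{Corollary:BLrepresentationOfMatroid}. Since $M$ is regular, by definition there exists a totally unimodular matrix $A \in \Z^{d \times N} \subseteq \K^{d \times N}$ that represents $M$. The goal is then to express the given representation $X$ of $M$ as $T \cdot A \cdot D$ for suitable $T$ and $D$.

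For $d \ge 3$, the main step is to apply Corollary~\ref{Corollary:BLrepresentationOfMatroid} with $X_1 = X$ and $X_2 = A$. This yields a permutation matrix $P \in \GL(N,\K)$, a non-singular diagonal matrix $D \in \GL(N, \K)$, a matrix $T \in \GL(d, \K)$, and a field automorphism $\psi : \K \to \K$ such that $X = \psi(T \cdot A \cdot P \cdot D)$. Now three elementary observations finish the argument. First, $A P$ is still totally unimodular, since permuting the columns of a matrix preserves the property that every minor lies in $\{-1, 0, 1\}$, and $AP$ still represents $M$ because the labelling of ground-set elements is encoded in $X$, not in $A$. Second, $\psi$ fixes every entry of $AP$: the entries lie in the prime subfield, which is pointwise fixed by any field automorphism. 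Third, applying $\psi$ entrywise sends $\GL(d,\K)$ to itself and sends the non-singular diagonal matrix $D$ to another non-singular diagonal matrix. Setting $A' := A P$, $T' := \psi(T)$, and $D' := \psi(D)$, we obtain $X = T' \cdot A' \cdot D'$, the required decomposition.

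The main obstacle is that Corollary~\ref{Corollary:BLrepresentationOfMatroid} is proved only for $d \ge 3$, so the cases $d = 1$ and $d = 2$ need separate, ad hoc arguments. For $d = 1$, the construction is immediate: take $A$ to have a $1$ in every non-loop column and $0$ in every loop column, set $T = (1)$, and let $D$ be the diagonal matrix whose $i$th entry is $x_i$ if $x_i \neq 0$ and $1$ otherwise. For $d = 2$, one uses that a rank-$2$ regular matroid has no $U_{2,4}$ minor and hence at most three parallel classes among its non-loop elements. Choose $T \in \GL(2,\K)$ so that representatives from two parallel classes become $e_1$ and $e_2$; a representative of a (possible) third class then has the form $\alpha e_1 + \beta e_2$ with $\alpha,\beta \neq 0$, and a further rescaling of $T$ brings it to $e_1 + e_2$. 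Every remaining column of $T^{-1} X$ is then a scalar multiple of $e_1$, $e_2$, or $e_1+e_2$ (or is zero), and these scalars are absorbed into a diagonal matrix $D$, leaving a totally unimodular $A$.
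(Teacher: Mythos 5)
Your proposal is correct and follows essentially the same route as the paper's proof: for $d\ge 3$ it invokes Corollary~\ref{Corollary:BLrepresentationOfMatroid}, absorbs the permutation into the totally unimodular matrix, and uses that a field automorphism fixes $\{0,\pm 1\}$; for $d\le 2$ it gives the same explicit normalisation via parallel classes (at most three in rank $2$ by the excluded-$U_{2,4}$ characterisation). No substantive differences.
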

\begin{proof}
 Let us first suppose that  the matroid $M$ has rank at least $3$.
    Then there is a totally unimodular matrix $A' \in \K^{d\times N}$ that   represents $M$. 
    By Corollary~\ref{Corollary:BLrepresentationOfMatroid},
    there exist a diagonal matrix $D'\in \GL(N,\K)$, a matrix $T' \in \GL(d,\K)$ and a field automorphism $\psi : \K \to \K$
    \st      $X = \psi( T' A' P D)  = \psi(T') \psi(A' P) \psi(D')$.
    Since $0$, $1$ and $-1$ are fixed by any field automorphism and $A'P$ is totally unimodular, the matrix $A := A' P = \psi(A' P) $ is also totally unimodular.
    Furthermore, 
    $T:=\psi(T')\in \GL(d,\K)$ and $D:=\psi(D')$ is a non-singular diagonal matrix.
    Hence $X = TAD$ as required.
    
    Now suppose that $ \rank(M) \le 2 $. 
    If $\rank(M)=1$ and $X=(x_1,\ldots, x_l, 0, \ldots, 0)$ for some $x_i\in \K^*$, we have $X= (1) \cdot (1,\ldots, 1,0,\ldots, 0) \cdot \diag(x_1,\ldots, x_l,1,\ldots, 1)$.
    Now we consider the case $\rank(M)=2$. Since $M$ is regular, $X$ may only contain three different types of vectors 
    (up to scaling).
   Hence, after applying a suitable transformation $T$ from the left, we may assume that $X$ has the following shape:
    \begin{equation}
        T X = \left( \begin{array}{cccccccccccc}
               \lambda_1 \alpha & \ldots & \lambda_{s_1} \alpha        & 0           & \ldots & 0               & \nu_1 a & \ldots & \nu_{s_3} a   & 0 & \ldots & 0             \\
                    0           & \ldots &    0                        & \mu_1 \beta & \ldots & \mu_{s_2} \beta & \nu_1 b & \ldots & \nu_{s_3} b   & 0 & \ldots & 0
             \end{array} \right),
 \notag
    \end{equation}
    for suitable $\lambda_i, \mu_j, \nu_k, \alpha, \beta, a, b\in \K^*$   and $ s_1,s_2,s_3 \in \Z_{\ge 0}$. Then
  \begin{equation}
          X = \diag( a, b    ) \cdot \left(\begin{array}{cccccccccccc}
                    1  & \ldots & 1 &    0  & \ldots & 0 & 1 & \ldots & 1 & 0 & \ldots & 0  \\
                    0  & \ldots & 0 &    1  & \ldots & 1 & 1 & \ldots & 1 & 0 & \ldots & 0 
             \end{array}\right) \cdot D,
  \end{equation}
 where $D = \diag\left(  \frac{\lambda_1\alpha} a, \ldots,  \frac{\lambda_{s_1}\alpha}{a}, 
     \frac{\mu_1\beta}{ b}, \ldots,  \frac{\mu_{s_2}\beta}{b}, \nu_1, \ldots, \nu_{s_3},1,\ldots, 1 \right)$.
    This is the shape we were looking for.
\end{proof}
  We will also need the following variant of Corollary~\ref{Corollary:BLrepresentationOfRegularMatroid}.
  \begin{Corollary}
 \label{Corollary:BLrepresentationOfRegularMatroidTUM}
  Let $ X, A \in \K^{d\times N}$ be two matrices that represent the same labelled regular matroid $M$ of rank $d$.
  Representing the same labelled matroid means that 
  for every $I\subseteq \binom{[N]}{d}$, 
  the submatrix $X|_{I}$ is a basis if and only if the submatrix $A|_{I}$ is a basis.
  Furthermore, we assume that $A$ is totally unimodular.

 Then there exist 
  a non-singular diagonal matrix $D\in \GL(N,\K)$,
  and a matrix $T\in \GL(d,\K)$ 
  \st
  $ X =  T A D$.
\end{Corollary}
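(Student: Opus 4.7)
The plan is to reduce the statement to a comparison of two totally unimodular representations of the same labelled matroid and then handle this reduction using the uniqueness of regular matroid representations. The main obstacle will be eliminating a permutation matrix that arises in the reduction.

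First, I would apply Corollary~\ref{Corollary:BLrepresentationOfRegularMatroid} to $X$ to obtain a decomposition $X = T_1 B D_1$, where $B \in \K^{d\times N}$ is totally unimodular, $T_1 \in \GL(d,\K)$, and $D_1 \in \GL(N,\K)$ is a non-singular diagonal matrix. Since left multiplication by $T_1$ and right multiplication by $D_1$ preserve the labelled matroid, $B$ and $A$ are both totally unimodular matrices representing the same labelled regular matroid $M$ of rank $d$. It therefore suffices to prove that two such TU matrices satisfy $B = T_2 A D_2$ for some $T_2 \in \GL(d,\K)$ and non-singular diagonal $D_2$; combining these yields $X = T A D$ with $T = T_1 T_2$ and $D = D_2 D_1$.

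For $d \le 2$ the reduced problem follows by a direct case analysis of the possible shapes of low-rank TU matrices, paralleling the rank-$\le 2$ portion of the proof of Corollary~\ref{Corollary:BLrepresentationOfRegularMatroid}. For $d \ge 3$, I would invoke Corollary~\ref{Corollary:BLrepresentationOfMatroid} to obtain $B = \psi(T_2 A P D_2)$ for some $T_2 \in \GL(d,\K)$, permutation matrix $P$, non-singular diagonal $D_2$, and field automorphism $\psi$. Since $A$ has entries in $\{0,\pm 1\}$ and $P$ has entries in $\{0,1\}$---both fixed by any field automorphism---this simplifies to $B = T_2' A P D_2'$ with $T_2' = \psi(T_2)$ and $D_2' = \psi(D_2)$. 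Expanding maximal minors as $\det(B|_I) = \det(T_2') \prod_{i \in I}(D_2')_{ii}\, \det(A|_{P(I)})$ and comparing with the hypothesis that $A$ and $B$ define the same labelled matroid forces $P$ to be an automorphism of this labelled matroid.

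The main obstacle is eliminating $P$: one must exhibit $S \in \GL(d,\K)$ and a non-singular diagonal $E$ with $AP = S A E$, from which $B = (T_2' S)\, A\, (E D_2')$ has the required form. The key structural observation is that since $A$ is totally unimodular, any two columns in the same parallel class of $M$ are $\pm 1$-multiples of each other, so a permutation acting within a single parallel class can be realised by right multiplication by a $\pm 1$ diagonal matrix. A matroid automorphism $P$ also permutes entire parallel classes of equal size, and a swap between two parallel classes can be realised by a left multiplication $S$ sending the one-dimensional span of one class to that of its image class. Decomposing $P$ into within-class and between-class pieces and constructing $S$ and $E$ successively---invoking the rigidity of regular matroid representations (Theorem~\ref{Theorem:RegularMatroidsUniqueRep}) or, for uniform treatment across all regular matroids, Seymour's decomposition (Theorem~\ref{Theorem:SeymourRegular}) to reduce to the graphic, cographic, and $R_{10}$ building blocks where the lift of matroid automorphisms is explicit---produces the required $S$ and $E$ and concludes the proof.
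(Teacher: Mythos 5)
Your overall reduction is reasonable, and your handling of the field automorphism (using that $A$ and $P$ have entries fixed by every automorphism) is the same trick the paper uses. The genuine gap is exactly the step you flag as the main obstacle: eliminating the permutation $P$. Once you have reduced to two totally unimodular matrices $A$ and $AP$ representing the same labelled matroid, the assertion $AP = SAE$ with $S\in\GL(d,\K)$ and $E$ diagonal \emph{is} the corollary you are trying to prove, specialised to the case where both matrices are totally unimodular; so appealing to ``rigidity of regular matroid representations'' at that point is circular unless you already have a permutation-free version of that rigidity. Theorem~\ref{Theorem:RegularMatroidsUniqueRep}, as stated in this paper, builds the permutation into its notion of equivalence, so it cannot be used to remove $P$. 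What is actually needed is the labelled (permutation-free) form of the Brylawski--Lucas uniqueness theorem, and that is precisely what the paper's proof invokes for $d\ge 3$: \cite[Proposition~6.3.13]{MatroidTheory-Oxley} directly gives $X=TAD$ with no permutation and no field automorphism left to strip off, so the obstacle you are fighting never arises.

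Your proposed substitute --- decomposing the automorphism induced by $P$ into within-class and between-class pieces and lifting it through Seymour's decomposition --- is a research sketch, not a proof. The within-class part is fine (parallel columns of a totally unimodular matrix differ by a sign, so such a permutation is realised by a $\pm 1$ diagonal matrix on the right), but the between-class part requires a \emph{single} $S\in\GL(d,\K)$ carrying every column of $A$ simultaneously to a scalar multiple of its image column. For general representable matroids such a projective realisation of a matroid automorphism need not exist, and for regular matroids its existence is again equivalent to unique labelled representability, so nothing has been gained. The initial reduction $X=T_1BD_1$ via Corollary~\ref{Corollary:BLrepresentationOfRegularMatroid} and the explicit rank $\le 2$ analysis are fine and match the paper.
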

\begin{proof}
  If $d\ge 3$, this follows directly from   \cite[Proposition~6.3.13]{MatroidTheory-Oxley}.
  If $d\le 2$, $D$ and $T$ can be constructed explicitly as in the proof of Corollary~\ref{Corollary:BLrepresentationOfRegularMatroid}.
  The assumption that $X_2$ is totally unimodular
  allows us to drop the field automorphism (totally unimodular matrices are invariant under field automorphisms). 
\end{proof}

 \begin{Remark}
 \label{Remark:RegularDecomposition}
   The proof of    Corollary~\ref{Corollary:BLrepresentationOfRegularMatroid}
   can be made constructive.
   First note that $X = T A D$ if and only if 
   $X = (\lambda T) A (\frac 1 \lambda D)$ for any $\lambda\in \K^*$, where $\lambda$ acts on the matrices by scalar multiplication.
   Hence we cannot expect $T$ and $D$ to be uniquely determined.
   Loops in the matroid correspond to zero columns in $X$ and $A$, so they are not affected by $T$ and $D$. Therefore, we may assume that there are no loops.

   The matrix $A$ can be found as follows: pick a basis  $ B_0$ of the matroid $M$. \WLOG, its elements correspond to the first $d$ columns of $X$.
   We set the corresponding columns of $A$ to an identity matrix.
   Now consider an entry $a_{ij}$ of $A$ with $j>d$. If
   $ B_0 \setminus \{i\} \cup \{j\}$ is dependent, $a_{ij}=0$ must hold, otherwise $a_{ij}=\pm 1$.
   This leads to a (potentially pretty large) number of candidates for the matrix $A$. Since the matroid is regular, one of these matrices
   must be a totally unimodular matrix that represents the matroid.
   In fact, one can fix the signs of certain entries arbitrarily (on a so-called coordinatizing path) and
   then the remaining signs are uniquely determined
   \cite[Proposition~2.7.3]{brylawski-lucas-1976} (see also \cite[Theorem~6.4.7]{MatroidTheory-Oxley}).

   We will now explain how one can obtain the matrices $D=\diag(\delta_1,\ldots, \delta_N)$ and $T\in \GL(d,\K)$.   
   Let $B$ and $B' = B \setminus\{i\} \cup \{j\}$ denote two distinct bases of the matroid represented by $X$.
   Let $A|_B$ and $A|_{B'}$ denote the submatrices of $A$ whose columns are indexed by $B$ and $B'$, respectively. 
   Similarly, we define $X|_B$ and $X|_{B'}$.
   Let $D_{B}$ and $D_{B'}$ denote the square submatrices of $D$  whose rows and columns are indexed by $B$ and $B'$, respectively.
   Then we have 
   \begin{align}
      \det(X|_{B}) &= \det(T)\det(A|_{B})\det(D|_B) =  \det(T) \det(A|_{B}) \prod_{\nu\in B} \delta_\nu 
      \\ &
    =  \det(T) \det(A|_{B}) \frac{ \delta_i   }{ \delta_j   }  \prod_{\nu\in B'} \delta_\nu =    \frac{\det(A|_{B})}{\det(A|_{B'})} \frac{ \delta_i   }{ \delta_j   } \det(X_{B'}) .
   \end{align}
    Hence $ \frac{ \delta_i   }{ \delta_j   } = \frac{\det(X|_B)\det(A|_{B'})}{\det(X|_{B'})\det(A|_{B})}$.   
   Since the basis exchange graph of a matroid is connected \cite{maurer-1973},
   we can find all the $\delta_i$ after setting $\delta_1:=1$. 
   Once we know $D$, we can fix a basis $B$ and using the equation
   $X|_B = T \cdot A|_B \cdot  D|_B$, we obtain $T =   X|_B (A|_B D|_B)^{-1}$.  
 \end{Remark}

 \subsection{Proof of Theorem~\ref{Proposition:MatricesTwoMultiplicities}}
 Using Corollary~\ref{Corollary:BLrepresentationOfRegularMatroidTUM}, it is relatively easy to prove
 this result.
\begin{proof}[Proof of Theorem~\ref{Proposition:MatricesTwoMultiplicities}]
  By assumption, $X_1$ and $X_2$ define the same regular labelled matroid that can be represented by a totally unimodular matrix $A$.
  By Corollary~\ref{Corollary:BLrepresentationOfRegularMatroidTUM},
  there are matrices $T_1,T_2\in \GL(d,\K)$ and
  diagonal matrices $D_1=\diag(\delta_1,\ldots, \delta_N),D_2=\diag(\delta_1',\ldots, \delta_N')\in \GL(N,\K)$
  \st
  $ X_1 = %
  T_1 A D_1$
  and
  $ X_2 =  T_2 A D_2$.
  Now let 
  \begin{align}
  X_{12} &:= T_1^{k_1} T_2^{k_2} A D_1^{k_1} D_2^{k_2}.  \notag
  \text{ Then for a set $I\in \binom{[N]}{d}$, 
 } 
  \displaybreak[2]\\
 \Delta_I(X_1) &=  \det(T_1) \det(A|_I) \prod_{i \in I} \delta_i,\\ %
 \Delta_I(X_2) &=  \det(T_2) \det(A|_I) \prod_{i \in I} \delta_i', \text{ and}   \displaybreak[2]\\
 \Delta_I(X_{12}) &=  \det(T_1)^{k_1} \det(T_2)^{k_2} \det(A|_I) \prod_{i \in I} \delta_i^{k_1} \delta_i'^{k_2} \text{ holds.}
  \end{align}
 This implies the first two statements.
 For the second statement, we use that $\det(A|_I)^{k_1+k_2} = \det(A|_I)$ if $k_1+k_2$ is odd.
 \smallskip
 
 For an ordered field $\K$, let $\sgn : \K \to \K$ denote as usual the function that maps $0$ to $0$ and $x\in \K^*$ to $x/\abs{x}$.
   For $j\in \{1,2\}$, let $\tilde T_j$ denote an identity matrix whose top-left entry is 
  replaced by $\sgn(\det(T_j)) \cdot \abs{\det(T_j)}^k$. Of course, $(T_j)^k$ and $\tilde T_j$ have the same determinant (up to sign) if they exist, 
  but in the setting of (iii), $(T_j)^k$ may not be defined.
  Let $\tilde D_1 := \diag(\sgn(\delta_1)\cdot\abs{\delta_1}^{k_1},\ldots, \sgn(\delta_N)\cdot\abs{\delta_N}^{k_2})$ and
   $\tilde D_2 := \diag(\sgn(\delta_N')\cdot\abs{\delta_1'}^{k_2},\ldots, \sgn(\delta_N')\cdot\abs{\delta_N'}^{k_2})$.
  Now let $X_{k_1k_2} := \tilde T_1 \tilde T_2  A  \tilde{D_1} \tilde {D_2}$.
  By construction, 
 \begin{equation}
      \abs{\Delta_I(X_{12})} =  \abs{\det(T_1)}^{k_1} \abs{\det(T_2)}^{k_2} \prod_{i \in I} \abs{\delta_i}^{k_1} \abs{\delta_i'}^{k_2} 
 \end{equation}
 for all $I\in \binom{[N]}{d}$
 and the signs are preserved.
\end{proof}

\section{On representable arithmetic matroids}
 \label{Section:MultiplicityFunctions}

 In this subsection  we will explain how the multiplicity function  can be calculated from a representation
  of an arithmetic matroid as greatest common divisor (gcd) of certain subdeterminants.
  Additionally, we show that a representable arithmetic matroid that has a multiplicative basis 
  can be represented by a matrix that starts with a diagonal matrix.
 
 Recall that the \emph{greatest common divisor} of two or more integers, not all of them zero, 
 is the largest \emph{positive} integer that divides all of them.
 We will use the following statement that is very easy to prove. 
\begin{Lemma}
  \label{Lemma:gcdPower}
  Let $I$ be a finite set and let $A = ( a_i)_{ i \in I}$ be a list of integers.
  Let $k \ge 0$ be an integer. 
  Then 
  $\gcd( a_i^{k} : i \in I ) =  \gcd( a_i : i \in I)^{k} $.
\end{Lemma}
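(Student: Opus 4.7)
The statement is elementary, so the plan is short. The natural approach is to compare the two sides prime by prime, using $p$-adic valuations.

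Let $d := \gcd(a_i : i \in I)$, which we may assume is positive (if all $a_i = 0$ there is nothing to check, and if $k=0$ both sides equal $1$ under the standard gcd convention, or we reduce to the positive case). For each prime $p$, write $v_p$ for the $p$-adic valuation. The key identities are $v_p(\gcd(b_i)) = \min_i v_p(b_i)$ for any list of integers $(b_i)$, and $v_p(n^k) = k \, v_p(n)$. Applying these I would compute
\begin{equation}
 v_p\bigl( \gcd(a_i^k : i\in I) \bigr) = \min_i v_p(a_i^k) = \min_i k\,v_p(a_i) = k \min_i v_p(a_i) = v_p(d^k),
\end{equation}
where in the third equality I pull the factor $k\ge 0$ outside the minimum. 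Since two positive integers with equal $p$-adic valuation at every prime $p$ coincide, this yields the claim.

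Alternatively, one can argue without factorisation: the inclusion $d^k \mid \gcd(a_i^k)$ is immediate since $d \mid a_i$ implies $d^k \mid a_i^k$ for every $i$. For the reverse inclusion one uses B\'ezout to write $d = \sum_i c_i a_i$ and expands $d^k$ multinomially; each summand is a product of $k$ factors drawn from the $a_i$'s, and a short elementary check (again at the level of prime valuations, or by observing that any monomial $a_{j_1}\cdots a_{j_k}$ is divisible by $\min_i a_i^k$ up to the prime decomposition) shows that $\gcd(a_i^k)$ divides it. The valuation argument above is the cleanest and I would present only that one.

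The only minor obstacle is bookkeeping around degenerate inputs (the case $k = 0$, and the case when all $a_i$ vanish), which should be dispatched in a single sentence before entering the main argument.
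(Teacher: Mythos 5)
Your $p$-adic valuation argument is correct and complete: the identities $v_p(\gcd(b_i))=\min_i v_p(b_i)$ and $v_p(n^k)=k\,v_p(n)$, together with pulling the nonnegative factor $k$ out of the minimum, give the claim at every prime, and your one-sentence disposal of the degenerate cases suffices (note the paper's gcd convention already excludes the all-zero list). The paper itself states this lemma as ``very easy to prove'' and supplies no proof, so there is nothing to compare against; your argument is exactly the standard one. A small caveat on your sketched alternative: the B\'ezout/multinomial route still needs valuations to see that $\gcd(a_i^k : i\in I)$ divides a mixed monomial $a_{j_1}\cdots a_{j_k}$, so it is not genuinely factorisation-free --- your decision to present only the valuation argument is the right one.
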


 Recall that for a list of vectors $X\subseteq \Z^d$ or a list of elements of a finitely generated abelian group $X\subseteq G$, 
 $m_X$ denotes the multiplicity function of the arithmetic matroid represented by $X$.
\begin{Lemma}
 \label{Lemma:MultiplicityFreeGroup}
 Let $X \subseteq \Z^d$ be a list of vectors and let $A\subseteq X$.
 \begin{enumerate}[(i)]
  \item If $A$ is independent, then  $m_X(A)$ is the greatest common divisor of all minors of size $\abs A$ of the matrix $A$.
         \label{Lemma:MultiplicityFreeGroupIndependent}
  \item  For arbitrary $A\subseteq X$, we have
        \begin{equation}
 \label{eq:dependentmultiplicity}
 m_X(A) = \gcd( \{ m_X(B) : B \subseteq A \text{ and  } \abs{B} = \rank(B) = \rank(A) \}).
 \end{equation}
\end{enumerate}
\end{Lemma}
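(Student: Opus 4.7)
The plan is to derive both parts from the Smith normal form theorem. For part~(i), let $r = |A| = \rank(A)$ and let $M\in\Z^{d\times r}$ be the matrix whose columns are the elements of $A$; by assumption $M$ has rank $r$. I would use the Smith normal form to write $M = U\Sigma V$ with $U\in\GL(d,\Z)$, $V\in\GL(r,\Z)$, and $\Sigma\in\Z^{d\times r}$ carrying invariant factors $d_1\mid d_2\mid\cdots\mid d_r$ on its diagonal and zeros elsewhere. A direct computation in the factored form then identifies
\[
 \langle A\rangle_\R\cap\Z^d \;=\; U(\Z^r\oplus\{0\}),\qquad \langle A\rangle \;=\; U(d_1\Z\oplus\cdots\oplus d_r\Z\oplus\{0\}),
\]
so the quotient has order $d_1\cdots d_r$, and therefore $m_X(A) = d_1\cdots d_r$. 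On the other hand, Cauchy--Binet shows that multiplication by unimodular matrices on either side preserves the gcd of the maximal minors of a matrix, so the gcd of the $r\times r$ minors of $M$ agrees with that of $\Sigma$. The only nonzero maximal minor of $\Sigma$ is $d_1\cdots d_r$, which completes part~(i).

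For part~(ii), set $r = \rank(A)$ and again form the matrix $M\in\Z^{d\times |A|}$ whose columns are the elements of $A$, now of rank $r\le|A|$. The same Smith normal form argument, this time producing $r$ nonzero invariant factors and $|A|-r$ zero columns in $\Sigma$, yields
\[
 m_X(A) \;=\; d_1\cdots d_r \;=\; \gcd\{\,r\times r \text{ minors of } M\,\}.
\]
I would then organise these minors by the column set $B\subseteq A$ of size $r$ that indexes them: if $\rank(B)<r$, every $r\times r$ minor of the submatrix $M|_B$ vanishes; if $\rank(B)=r$, then $B$ is independent and moreover $\langle B\rangle_\R = \langle A\rangle_\R$, so the saturation lattice used in the definition of $m_X(B)$ coincides with the one used for $m_X(A)$. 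Applying part~(i) to $B$ in that common saturation then identifies $\gcd\{r\times r\text{ minors of } M|_B\}$ with $m_X(B)$, and taking the gcd over all such $B$ yields \eqref{eq:dependentmultiplicity}.

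The whole argument is essentially bookkeeping once the Smith normal form is in hand; the main subtlety I want to flag is the identification, in part~(ii), of the saturation lattice $\langle B\rangle_\R\cap\Z^d$ with $\langle A\rangle_\R\cap\Z^d$ whenever $B\subseteq A$ is independent of size $\rank(A)$. This is what allows the internal appeal to part~(i) to compute $\gcd\{r\times r\text{ minors of } M|_B\}$ as $m_X(B)$ rather than as an index in some different ambient lattice, and it is the only place where the hypothesis $\rank(B)=\rank(A)$ genuinely enters.
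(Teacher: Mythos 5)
Your proof is correct. Note that the paper does not actually prove this lemma: it attributes part~(i) to Stanley and part~(ii) to D'Adderio--Moci and leaves the argument to those references, so your Smith-normal-form derivation is a self-contained replacement rather than a variant of an argument in the text. The two key facts you use are both sound: writing $M=U\Sigma V$ identifies $m_X(A)=\abs{(\langle A\rangle_\R\cap\Z^d)/\langle A\rangle}$ with the product $d_1\cdots d_r$ of the invariant factors, and the $r$th determinantal divisor (the gcd of the $r\times r$ minors) is invariant under unimodular equivalence by Cauchy--Binet, so it too equals $d_1\cdots d_r$. For part~(ii) the bookkeeping by column sets $B$ works: subsets with $\rank(B)<r$ contribute only zero minors, which do not affect the gcd since at least one $B$ attains rank $r$, and for the others part~(i) applies verbatim. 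The one "subtlety" you flag is actually not needed: part~(i) computes $m_X(B)$ directly as an index inside the fixed ambient lattice $\Z^d$ (which is how the paper defines the multiplicity of every subset), so no identification of saturation lattices is required --- though your observation that $\langle B\rangle_\R=\langle A\rangle_\R$ when $\rank(B)=\rank(A)$ is true and harmless.
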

  The first part is essentially due to Stanley \cite[Theorem~2.2]{stanley-1991}, the second was observed
  by D'Adderio--Moci \cite[p.~344]{moci-adderio-2013}.
  This lemma allows us to calculate the multiplicity function of an arithmetic matroid, given a representation in a free group.
  Recall that a representable arithmetic matroid can be represented in a free group if and only if $ m( \emptyset ) = 1 $, since
  $ m( \emptyset ) $ is equal to the cardinality of the torsion subgroup of the ambient group.

 If there is torsion, the calculation becomes a bit more complicated.  
  The first of the following two lemmas is an important special case of the second. 
  We will still give a proof for the first lemma, since it is very short.
\begin{Lemma} 
 \label{Lemma:TorsionIndependent}
  Let $G$ be a finitely generated abelian group and $X$ be a finite list of elements of $G$.
  Recall that $G_t$ denotes the torsion subgroup of $G$ and $\latproj{X}$ denotes the image of $X$ in $G/G_t$ (cf.~Subsection~\ref{Subsection:AriMatroids}).

  Then
  for $A\subseteq X$ independent, 
  $m_X(A) = m_{\latproj{X}}(A) \cdot  \abs{G_t}$.
\end{Lemma}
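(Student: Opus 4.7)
My plan is to unpack the definition $m_X(A) = |G_A/\langle A\rangle|$, where $G_A$ is the saturation of $\langle A\rangle$ in $G$ (the maximal subgroup of $G$ containing $\langle A\rangle$ with finite index over it). I will exhibit a short exact sequence that relates $G_A/\langle A\rangle$ to $\bar G_A/\langle \bar A\rangle$ with kernel isomorphic to $G_t$, from which the multiplicativity of cardinalities yields the claim.

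First, I would observe that the independence hypothesis on $A$ forces $\langle A \rangle$ to be torsion-free. Indeed, independence means that the images $\bar A$ in $\bar G$ are linearly independent, so any relation $\sum n_i a_i \in G_t$ projects to $\sum n_i \bar a_i = 0$ in $\bar G$, which forces each $n_i = 0$. Consequently $G_t \cap \langle A \rangle = 0$, a fact I will need at the very end.

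Next, I would set up the natural map $\varphi : G_A/\langle A \rangle \longrightarrow \bar G_A/\langle \bar A\rangle$ induced by the quotient $\pi : G \twoheadrightarrow \bar G = G/G_t$. One checks quickly that $\pi(G_A) = \bar G_A$ (saturations map to saturations under surjections with torsion kernel) and $\pi(\langle A\rangle) = \langle \bar A\rangle$, so $\varphi$ is well-defined and surjective. Since $G_t \subseteq G_A$ (every torsion element is annihilated, hence multiplied into $\langle A\rangle$, by some nonzero integer), the composition $G_t \hookrightarrow G_A \to G_A/\langle A\rangle$ makes sense; the intersection $G_t \cap \langle A\rangle = 0$ established above shows this composition is injective. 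A direct computation of $\ker \varphi$ gives $(G_t + \langle A\rangle)/\langle A\rangle \cong G_t/(G_t \cap \langle A\rangle) \cong G_t$, producing the short exact sequence
\begin{equation}
0 \longrightarrow G_t \longrightarrow G_A/\langle A\rangle \longrightarrow \bar G_A/\langle \bar A\rangle \longrightarrow 0.
\end{equation}

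Taking cardinalities (all three groups are finite, since $A$ is independent and hence $m_{\bar X}(A)$ is finite and $G_t$ is finite) yields $m_X(A) = |G_t| \cdot m_{\bar X}(A)$, as required. The only subtle point is verifying $\ker \varphi = (G_t + \langle A\rangle)/\langle A\rangle$ together with torsion-freeness of $\langle A\rangle$; everything else is a direct diagram chase. I do not expect any serious obstacle beyond this bookkeeping.
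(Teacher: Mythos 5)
Your proof is correct and follows essentially the same route as the paper: the paper's one-line argument rests on the decomposition $G_A \cong \bar G_{\bar A} \oplus G_t$ together with $\langle A\rangle \cong \langle \bar A\rangle$, and your short exact sequence $0 \to G_t \to G_A/\langle A\rangle \to \bar G_{\bar A}/\langle \bar A\rangle \to 0$ is precisely the unsplit form of that statement. You simply supply the details (torsion-freeness of $\langle A\rangle$, surjectivity of the saturation onto the saturation, identification of the kernel) that the paper leaves implicit.
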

\begin{proof}
  If $A$ is independent, then $G_A \cong \bar G_{\latproj A}  \oplus G_t$ and  
   $ G \supseteq \langle A\rangle \cong \langle \latproj A\rangle \subseteq{\latproj{G}}$.
  Hence   $m_X(A) = \abs{ G_A / \langle A\rangle  } =  \abs{\bar G_{\bar A} / \langle \bar A\rangle} \cdot \abs{G_t} =
    m_{\latproj{X}}(A) \cdot  \abs{G_t}$.
\end{proof}

 Let $X  \subseteq \Z^d \oplus \Z_{q_1}\oplus \ldots \oplus \Z_{q_n}$ be a list  with $N$ elements. 
 On page~\pageref{page:FirstDefinitionOfLifting}, we defined a lifting of $X$. 
 Now we will describe a concrete construction of the lifting:
 we can choose $\lift(X) \in\Z^{(d+n)\times (N+n)}$ as the matrix 
 \begin{equation}
 \label{equation:LiftingConstruction}
    \lift(X) := \left(\begin{array}{c|c}  \latproj{X}  & 0 \\\hline 
                                                  L   & Q
         \end{array}\right) \in\Z^{(d + n)\times (N+n)},
 \end{equation}
 where $Q=\diag(q_1,\ldots, q_n)\in \Z^{n\times n}$
 and $L = (l_{ij})$ denotes a lifting of the torsion part of $X$, \ie  $l_{ij}$ is equal modulo $q_j$ to
 the $i$th torsion part of the $j$th vector in $X$.
 The arithmetic matroid $\Acal(X)$ is represented by the list of vectors $ \lift(X) / (0,Q)^T$, so $\lift(X)$ is indeed a lifting of $X$.
 The following lemma holds for any lifting of the list $X$.
 
 \begin{Lemma}[Lifting]
 \label{Lemma:generalmultiplicity}
   Let  $X \subseteq \Z^d \oplus \Z_{q_1}\oplus \ldots \oplus \Z_{q_n}$ and let  $\lift(X)\subseteq \Z^d\oplus \Z^n$ be a lifting.
  Let   $\Acal_X= (E, \rank, m_X)$ and  $\Acal_{\lift(X)}= (E \cup\{\iota_1,\ldots, \iota_n\}, \rank_{\lift(X)}, m_{\lift(X)})$ denote the corresponding arithmetic matroids.
   Let  $A \subseteq E$. Then 

   \begin{equation*}
   \begin{split}
            m_X(A) &= m_{\lift(X)}(A \cup \{\iota_1,\ldots, \iota_n\} )  = \gcd( \{  \det(S) : S \text{ maximal square sub-} \\
                  & \qquad\quad  \text{matrix of maximal independent subset of } \lift(X)|_{ A \cup \{\iota_1,\ldots, \iota_n\}} \}).      
   \end{split}
   \end{equation*}
 \end{Lemma}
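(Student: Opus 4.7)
The proposal is to split the statement into its two equalities and dispatch each with one of the tools already at hand.

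For the first equality, the plan is to observe that by the very definition of a lifting (page~\pageref{page:FirstDefinitionOfLifting}), we have $\Acal_{\lift(X)} / \{\iota_1,\ldots,\iota_n\} = \Acal_X$. The contraction of an arithmetic matroid was defined in Subsection~\ref{Subsection:RestrictionContraction} by $m_{\Acal/Y}(B) = m_\Acal(B \cup Y)$ for $B \subseteq E \setminus Y$. Applying this with $Y = \{\iota_1,\ldots,\iota_n\}$ and $B=A$ gives
\begin{equation*}
  m_X(A) \;=\; m_{\lift(X)/\{\iota_1,\ldots,\iota_n\}}(A) \;=\; m_{\lift(X)}\bigl(A \cup \{\iota_1,\ldots,\iota_n\}\bigr),
\end{equation*}
which is exactly what we want. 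The only thing to verify here is that the set $\{\iota_1,\ldots,\iota_n\}$ is disjoint from $E$, but this is built into the definition of lifting.

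For the second equality, the plan is to apply Lemma~\ref{Lemma:MultiplicityFreeGroup} to the list $\lift(X) \subseteq \Z^{d+n}$, which does live in a free abelian group, so that lemma is available. Setting $C := A \cup \{\iota_1,\ldots,\iota_n\}$, part~(ii) of Lemma~\ref{Lemma:MultiplicityFreeGroup} gives
\begin{equation*}
  m_{\lift(X)}(C) \;=\; \gcd\bigl\{\, m_{\lift(X)}(B) \,:\, B \subseteq C,\ |B| = \rank(B) = \rank(C) \,\bigr\},
\end{equation*}
and part~(i) evaluates each $m_{\lift(X)}(B)$ for an independent $B$ as the gcd of the maximal (i.e.\ $\rank(B) \times \rank(B)$) minors of the submatrix $\lift(X)|_B$. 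Taking a gcd of gcds is a single gcd, so
\begin{equation*}
  m_{\lift(X)}(C) \;=\; \gcd\bigl\{\, \det(S) \,:\, S \text{ max.\ square submatrix of a max.\ independent subset of } \lift(X)|_C \,\bigr\},
\end{equation*}
which is the second equality in the lemma.

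I do not expect any real obstacles: both steps are essentially bookkeeping once one accepts the definition of lifting and Lemma~\ref{Lemma:MultiplicityFreeGroup}. The only mildly delicate point is the compatibility of the rank functions: one should check that $\rank_{\lift(X)}(C) = \rank_X(A) + n$ so that ``maximal independent subset of $\lift(X)|_C$'' is the right notion to produce a matrix $S$ of the correct size, and that adding the unit-like columns $\iota_1,\ldots,\iota_n$ really increases the rank by exactly $n$. This is immediate from the explicit form~\eqref{equation:LiftingConstruction} of $\lift(X)$, whose last $n$ columns are $(0, q_j e_j)^T$ and hence linearly independent of the first $N$ columns, and independent of one another.
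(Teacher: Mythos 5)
Your proof is correct and follows exactly the route of the paper's own (two-line) proof: the first equality is the definition of a lifting combined with the definition of contraction for arithmetic matroids, and the second is Lemma~\ref{Lemma:MultiplicityFreeGroup} applied to the free list $\lift(X)\subseteq\Z^{d+n}$, with the gcd-of-gcds collapse. Your added sanity check on the rank of $A\cup\{\iota_1,\ldots,\iota_n\}$ is a reasonable bit of extra care but not a deviation.
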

 \begin{proof}
   The first equality follows from the fact that 
   \begin{equation*}
    \Acal(X) = \Acal({\lift(X)}) /\{\iota_1,\ldots, \iota_n\}   
   \end{equation*}
 holds and the definition of the contraction.
   The second equality follows from 
     Lemma~\ref{Lemma:MultiplicityFreeGroup}.
 \end{proof}

 \begin{Lemma}[QSUL]
  \label{Lemma:MultiplicityScaledUnimodular}
  Let $X = (x_e)_{e\in E} \subseteq \Z^d \oplus \Z_{q_1}\oplus \ldots \oplus \Z_{q_n}$ be a QSUL with $N$ elements, \ie
  there is a totally 
  unimodular list $A \in \Z^{(d+n)\times (N+n)}$, a non-singular diagonal matrix $D=\diag(\delta_1,\ldots, \delta_{N+n})\in \Z^{(N+n)\times (N+n)}$ and a sublist $Y\subseteq AD$ 
  \st
   $X = A D / Y$.
     Then the multiplicity function $m_X$ satisfies
     \begin{equation}
      m_X(S) = \gcd \bigg( \bigg\{ \prod_{ e \in T } \delta_e : T \text{ maximal independent subset of } S \cup Y \bigg\} \bigg)      
     \end{equation}
    for any $ S\subseteq E$.
  \end{Lemma}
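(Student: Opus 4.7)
\smallskip
The plan is to reduce the statement to Lemma~\ref{Lemma:generalmultiplicity} (the Lifting Lemma) and then exploit the factorization $AD = A \cdot D$ together with the total unimodularity of $A$.

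First I would observe that, by the definition of a QSUL, the matrix $AD \in \Z^{(d+n) \times (N+n)}$ is a concrete lifting of $X$ in the sense of Section~\ref{Section:MultiplicityFunctions}: it is a matrix over the free group $\Z^{d+n}$, and contracting the sublist $Y$ recovers $X$. Applying Lemma~\ref{Lemma:generalmultiplicity} with this lifting and with the role of $\{\iota_1,\ldots,\iota_n\}$ played by $Y$, one obtains
\begin{equation*}
 m_X(S) \;=\; m_{AD}(S \cup Y) \;=\; \gcd\bigl\{\, \abs{\det M}\,:\, M \text{ maximal square submatrix of } (AD)|_T,\ T \text{ MIS of } S \cup Y \bigr\},
\end{equation*}
where ``MIS'' abbreviates ``maximal independent subset''.

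The heart of the argument is then a one-line computation for each fixed MIS $T \subseteq S \cup Y$. Writing $D|_{T,T} = \diag(\delta_e : e \in T)$, we have $(AD)|_T = A|_T \cdot D|_{T,T}$, so every $\abs{T} \times \abs{T}$ submatrix of $(AD)|_T$ obtained by selecting a row set $R$ has determinant
\begin{equation*}
 \det\bigl( (A|_T)_R \bigr) \cdot \prod_{e \in T} \delta_e.
\end{equation*}
Since $A$ is totally unimodular, each factor $\det((A|_T)_R)$ lies in $\{-1,0,1\}$, and since $T$ is independent of rank $\abs{T}$, at least one such row selection yields $\pm 1$. Consequently, the gcd of the maximal minors of $(AD)|_T$ equals $\abs{\prod_{e \in T} \delta_e}$.

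Taking the gcd of these values as $T$ ranges over all MIS of $S \cup Y$ then delivers the formula, using Lemma~\ref{Lemma:gcdPower}-style manipulations only insofar as $\gcd$ is insensitive to signs of the entries. The only potentially subtle point — the main obstacle, such as it is — is verifying that every MIS of $S \cup Y$ in the matroid of $AD$ has the same rank as $S \cup Y$ (so that we can legitimately speak of ``maximal square submatrices''); but this is immediate from the matroid axiom that all bases of a fixed set have the same size, so the argument requires no further ingredient beyond Lemma~\ref{Lemma:generalmultiplicity} and the total unimodularity of $A$.
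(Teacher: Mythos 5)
Your proof is correct and follows essentially the same route as the paper's: reduce to Lemma~\ref{Lemma:generalmultiplicity} via the lifting $AD$ with $Y$ playing the role of the contracted columns, then factor each maximal square submatrix as a submatrix of $A$ times a diagonal block of $D$ and use total unimodularity plus independence of $T$ to conclude that the gcd of its maximal minors is $\abs{\prod_{e\in T}\delta_e}$. No gaps.
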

\begin{proof}
     By Lemma~\ref{Lemma:generalmultiplicity}
     \begin{equation}
     \begin{split}  m_X(S) &= 
               \gcd(  \{ \det(K) : K \text{ maximal square submatrix of maximal } \\ 
               & \qquad\qquad \text{independent subset of } (AD)|_{ S \cup Y } \} ).     
             \end{split}
     \end{equation}
     Let $T \subseteq  (AD)|_{ S \cup Y } $ be a maximal independent subset.
     It is sufficient to prove that 
     \begin{equation}
             \prod_{e\in T} \abs{ \delta_e } = \gcd( \{ {\det(K)} : K \text{ maximal square submatrix of } T \}). 
     \end{equation}
     But this is clear: we can write $K = A_1 D_1$, with $A_1$ and $D_1$ suitable submatrices of $A$ and $D$.
     By construction $ \det(K) = \det(A_1) \prod_{j \in J} \delta_j $, where $J$ indexes the elements of $T$.
     Since $A$ is totally unimodular, $\det(A_1) \in \{0,-1,1\}$ must hold. Since $T$ is independent, 
     there must be one submatrix $K$ \st the corresponding  $A_1$ has a 
     non-zero determinant. This finishes the proof.
  \end{proof}

\begin{Lemma}
   \label{Lemma:DiagonalMatrixMultiplicativeBasis}
   Let $X \subseteq \Z^d$ be a list of vectors that spans $\R^d$ and let $B$ be a multiplicative basis for the arithmetic matroid  $\Acal(X) = (E, \rank, m)$. 
   Let $X'$ denote the Hermite normal form of $X$ with respect to $B$.
   Then 
   the columns of $X'$ that correspond to $B$ form a diagonal matrix.
 \end{Lemma}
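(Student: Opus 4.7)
The plan is to exploit the rigidity of the Hermite normal form together with the multiplicativity assumption, which gives a sharp divisibility condition on the diagonal entries. Since left-multiplying $X$ by a unimodular matrix $U\in \GL(d,\Z)$ does not change the arithmetic matroid, after reordering the columns so that $B$ appears first, the Hermite normal form $X'$ has the property that the submatrix $X'|_B$ is upper triangular with diagonal entries $x_{11},\ldots,x_{dd}>0$ and off-diagonal entries satisfying $0\le x_{ij}<x_{jj}$ for $i<j$.

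First I would compute both sides of the multiplicativity identity $m(B)=\prod_{j=1}^d m(\{b_j\})$ in terms of the entries of $X'|_B$. Since $B$ is a basis of rank $d$ in the free group $\Z^d$, Lemma~\ref{Lemma:MultiplicityFreeGroup}\eqref{Lemma:MultiplicityFreeGroupIndependent} gives $m(B)=\abs{\det(X'|_B)}=x_{11}x_{22}\cdots x_{dd}$. The same lemma applied to the singleton $\{b_j\}$ shows that $m(\{b_j\})$ is the gcd of the non-zero entries of the $j$-th column of $X'|_B$, namely
\begin{equation*}
 m(\{b_j\})=\gcd(x_{1j},x_{2j},\ldots,x_{jj}).
\end{equation*}

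Next I would combine these two formulas with the multiplicativity hypothesis. Since $m(\{b_j\})$ divides $x_{jj}$ for each $j$, the product $\prod_j m(\{b_j\})$ divides $\prod_j x_{jj}=m(B)$; equality forces $m(\{b_j\})=x_{jj}$ for every $j$. Unpacking the gcd, this means that $x_{jj}$ divides each of $x_{1j},x_{2j},\ldots,x_{j-1,j}$.

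The key final step is now immediate: the Hermite normal form inequality $0\le x_{ij}<x_{jj}$ for $i<j$, combined with $x_{jj}\mid x_{ij}$, forces $x_{ij}=0$ for all $i<j$. Hence $X'|_B$ is diagonal, as required. I do not foresee a genuine obstacle here; the statement really is a direct interplay between the Hermite normal form bounds and the divisibility constraint that multiplicativity of $B$ imposes on the triangular matrix $X'|_B$, and the torsion-free hypothesis lets us avoid the complications of Lemma~\ref{Lemma:generalmultiplicity}.
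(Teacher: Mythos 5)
Your proof is correct and follows essentially the same route as the paper's: both compute $m(B)=\prod_j x_{jj}$ via the determinant and $m(\{b_j\})$ as the gcd of the $j$th column, use the multiplicativity hypothesis to force $m(\{b_j\})=x_{jj}$ for every $j$, and then invoke the Hermite normal form bound $0\le x_{ij}<x_{jj}$ to conclude that the off-diagonal entries vanish. The only cosmetic difference is that you phrase the final comparison through divisibility of the products while the paper phrases it through the inequality $\lambda_j\ge m(\{b_j\})$; these are equivalent.
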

 \begin{proof}
  Let $(b_1,\ldots, b_d)$ denote the columns of $X'$ that correspond to $B$, \ie the ones that form an upper triangular matrix.
   Let $\lambda_1,\ldots, \lambda_d$  denote the entries on the diagonal of this matrix ($\lambda_i\in \Z_{\ge 1}$).
   Then
   \begin{equation}
     \label{eq:HNFmultiplicity}
     \prod_{i=1}^d \lambda_i = \det(B) = m(B) = \prod_{i=1}^d m(\{ b_i\}). 
   \end{equation}
   Recall that $m(\{ b_i\})$ is equal to the absolute value of the $\gcd$ of all its entries.
   Since all entries of $b_i$ are non-negative and $\lambda_i$ is strictly bigger then all the other entries, we have
   $\lambda_i \ge  m(\{ b_i\})$ and equality holds if and only if $\lambda_i$ is the only non-zero entry.
   Hence \eqref{eq:HNFmultiplicity} can hold only if $\lambda_i =  m(\{ b_i\})$ for all $i$. This implies that the columns of $X'$ that correspond to $B$ form a diagonal matrix.
 \end{proof}

 \section{On multiplicative and regular arithmetic matroids}
 \label{Section:MultiplicativeRegularAriMatroids}
 
 In the first subsection we will prove the results on general regular arithmetic matroids
 that are strongly or weakly multiplicative.
 In Subsection~\ref{Subsection:LabelledGraphs} we will prove the results on  
 arithmetic matroids defined by labelled graphs.
 
 \subsection{On general multiplicative and regular arithmetic matroids}
  
\begin{proof}[Proof of Proposition~\ref{Proposition:RegularDoublyScaledUnimodular}]

   (i)$\Rightarrow$(ii):  
   Let us first assume that the arithmetic matroid $\Acal$ is torsion-free.
   Then it can be represented by a list of vectors $X\subseteq \Z^d$. 
   Without loss of generality, the first $d$ columns of $X$ form a multiplicative basis.
   Hence by Lemma~\ref{Lemma:DiagonalMatrixMultiplicativeBasis}, we may assume that
   they
   form a diagonal matrix $X_1\in \Z^{d \times d}$.

  The matrix $X$ is a rational representation of a regular matroid.
  Hence by Corollary~\ref{Corollary:BLrepresentationOfRegularMatroid},
  there   is a totally unimodular matrix
  $A\in \Z^{d\times N}$,
    $T\in \GL(d,\Q)$, and a diagonal matrix $D\in \GL(N,\Q)$ 
   \st $X = T A D$. 
  We may assume that the first $d$ columns of $A$ form an identity matrix:
   by assumption, they form a basis $B$ which has determinant $\pm 1$.
   If this is not the identity matrix, we can replace $A$ by $ B^{-1}A $ and $ T $ by $ TB $.
   $B^{-1}A$ is still totally unimodular and of course, $ X = T B B^{-1}A D$ holds.
   Let $D_1$ denote the $(d\times d)$-submatrix of $D$ that consists of the first $d$ rows and columns.
   We have $X_1 = T D_1$. Since $X_1$ and $D_1$ are both non-singular diagonal matrices, $T$ must be a diagonal matrix too.
   Hence $X$ is a doubly scaled unimodular list.

  Now let us consider the case where $\Acal$ may have torsion.
  By definition, there is a torsion-free arithmetic matroid $\Acal'$ that has a multiplicative basis and there is a subset 
  $Y$ of this basis \st $\Acal = \Acal'/Y$.
  As we have shown above, $\Acal$ can be represented by a doubly scaled unimodular list, \ie 
  $X' = TAD$, where $A$ is totally unimodular and $T$ and $D$ are diagonal matrices. 
  In addition, we may assume that the columns that correspond to the multiplicative basis form a diagonal matrix.
  $Y$ is a subset of these columns.
  Hence $X = TAD/Y$ is a QDSUL.

  \smallskip
  (ii)$\Rightarrow$(i): Let $X = TAD/Y$ be a representation of $\Acal$ by a QDSUL. 
  Then $TAD$ represents a torsion-free lifting of $\Acal$.
  Using the identity matrix requirement of a QDSUL (cf.~\eqref{eq:ShapeQDSUL}) and Lemma~\ref{Lemma:generalmultiplicity},
  it is easy to see that the lifting has a multiplicative basis.
\end{proof}
\begin{proof}[Proof of Proposition~\ref{Proposition:RegularScaledUnimodular}]
  (ii)$\Rightarrow$(i)  
  Let $\Acal$ be the arithmetic matroid that is represented by the list $X=AD/Y$
  with $A$ totally unimodular, $D$ a non-singular diagonal matrix and $Y\subseteq AD$.
  It follows from Lemma~\ref{Lemma:MultiplicityScaledUnimodular} that $AD$ represents an arithmetic matroid that is strongly multiplicative.
  Hence $\Acal$ is a quotient of a torsion-free arithmetic matroid that is strongly multiplicative.

   \smallskip
  (i)$\Rightarrow$(ii).
    Let us first assume that the arithmetic matroid $\Acal$ is torsion-free.
    As in the proof of Proposition~\ref{Proposition:RegularDoublyScaledUnimodular},
    using Corollary~\ref{Corollary:BLrepresentationOfRegularMatroid},
    we  can  find  diagonal  matrices  $ T = \diag(t_1,\ldots, t_d) \in \GL(d,\Q)$ and $ D = \diag(\delta_1,\ldots, \delta_N) \in \GL(N,\Q) $  and 
    a totally unimodular matrix $A$ 
    \st $X = T A D$ and the first $d$ columns of $A$ form an identity matrix.
    It is sufficient to show that we can assume that $T$ is the identity matrix.
    We may assume that all $t_i$ are positive:
    if $t_i<0$, we can  replace it by $-t_i$ and $A$ by the matrix obtained from $A$ by multiplying the $i$th row by $-1$. This does not change the product $TA$.
    Now suppose there is an entry  $t_i \in \Q \setminus \{0,1\}$.
    
    \emph{Case 1:} there is $ j \in \{ d+1, \ldots, N \}$ and $ k\in [d] $ \st $ a_{ij}\neq 0 $, $ a_{kj} \neq 0 $, $\abs{t_i}\neq \abs{t_k}$. This means
    that in the $j$th column of $X$, the $i$th and the $k$th entry are non-zero and have different absolute values.
     Without loss of generality, $\abs{t_i} > \abs{t_k}$ (otherwise, we   switch $i$ and $k$).
     Now let us consider the basis $B = \{1,\ldots, \hat{i}, \ldots, d, j\}$. Since $a_{ij}\neq 0$, this is indeed a basis.
     Its multiplicity is
     \begin{align}
      \abs{ t_1\delta_1\cdots \widehat{t_i\delta_i}\cdots t_d\delta_d \cdot \delta_j t_i } 
     &\neq
    \abs{ t_1\delta_1\cdots \widehat{t_i\delta_i}\cdots t_d\delta_d \cdot \delta_j } \underbrace{\gcd(t_\nu : a_{\nu j}\neq 0)}_{ \le \abs{t_k} < \abs{t_i} } 
    \notag \\ &= \prod_{b\in B} m(b).
    \end{align}
    Hence the basis $B$ is not multiplicative. This is a contradiction.
    
    \emph{Case 2:} for all $ j \in \{d+1, \ldots, N\} $ and $i\in [d]$ \st $x_{ij}\neq 0$, all other entries in column $j$ of $X$
    are either $0$ or their absolute value is $\abs{x_{ij}}$.
    This implies that all  $t_\nu$ with $\nu \in \Gamma_{i} := \{ \nu \in [d] : \text{there exists } j \text{ \st } x_{ij} x_{\nu j} \neq 0\}$  are equal
    to $t_i$.
    In this case, we can just replace  
    all the $t_\nu$ with $\nu\in \Gamma_i$ by $1$ and all the  $\delta_j$  
    with $j\in \{ \mu \in [N] : \text{there is } \nu\in \Gamma_i \text{ \st } x_{\nu \mu}\neq 0\}$
    by $t_i\delta_j$.
    For the new diagonal matrix $D'$,
      $X=AD'$   holds.
    Here is an example for this process:
    \begin{align}
      &  \diag(2,2,2,5)
        \cdot 
       \begin{pmatrix} 
           1 & 0 & 0 & 0 & 1 & 1 & 0  \\
           0 & 1 & 0 & 0 & 1 & 0 & 0  \\
           0 & 0 & 1 & 0 & 0 & 1 & 0  \\
           0 & 0 & 0 & 1 & 0 & 0 & 1  \\
       \end{pmatrix} \cdot \diag(1,1,1,1,1,1,10)
       \displaybreak[2]
       \\ =&
       \diag(1,1,1,1) \cdot
       \begin{pmatrix} 
           1 & 0 & 0 & 0 & 1 & 1 & 0  \\
           0 & 1 & 0 & 0 & 1 & 0 & 0  \\
           0 & 0 & 1 & 0 & 0 & 1 & 0  \\
           0 & 0 & 0 & 1 & 0 & 0 & 1  \\
       \end{pmatrix} \cdot \diag(2,2,2,5,2,2,50).
   \end{align}
      
    \smallskip
  Now let us consider the case where $\Acal$ may have torsion.
    Let $X$ be a representation of $\Acal$ in a finitely generated abelian group and let
  $\lift(X)$ be a lifting \st all of its bases are multiplicative (exists by assumption).
  As we have seen above, we can write  $\lift(X) = A D$.
  So by the definition of a lifting we have $X = \lift(X) / Y = A D / Y$ for a suitable sublist $Y\subseteq AD$.  
\end{proof}

  \begin{Lemma}
  \label{Lemma:MultiplicityRegularMultiplicative}
     Let $\Acal = (E,\rank, m)$ be a regular and strongly multiplicative arithmetic matroid.
     By definition, there is a torsion-free arithmetic matroid 
     $\Acal' = (E\cup Y, \tilde\rank, \tilde m)$ that is strongly multiplicative
     and $\Acal= \Acal'/Y$, where $E \cap Y = \emptyset$.

     Then for any $S\subseteq E$, the multiplicity function $m$ satisfies
     \begin{equation}
      m(S) = \gcd \bigg( \bigg\{ \prod_{e\in T} \tilde m(e) : T \text{ maximal independent subset of } S \cup Y \bigg\} \bigg)  .    
     \end{equation}
  \end{Lemma}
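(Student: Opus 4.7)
The plan is to combine Proposition~\ref{Proposition:RegularScaledUnimodular} with Lemma~\ref{Lemma:MultiplicityScaledUnimodular}, using the fact that the diagonal scaling factors in a scaled unimodular representation are exactly the singleton multiplicities of its columns. Since $\Acal$ is regular and strongly multiplicative, by the convention adopted just before Proposition~\ref{Proposition:RegularScaledUnimodular}, we may assume that the torsion-free lifting $\Acal'$ in the statement is simultaneously regular, torsion-free and strongly multiplicative. Applying Proposition~\ref{Proposition:RegularScaledUnimodular} (the direction (i)$\Rightarrow$(ii)) to $\Acal'$, I would obtain a scaled unimodular representation $\lift(X) = A D$ of $\Acal'$ whose columns are indexed by $E \cup Y$, where $A$ is totally unimodular and $D = \diag(\delta_e)_{e \in E \cup Y}$ is a non-singular diagonal matrix. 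Then $X = \lift(X)/Y = AD/Y$ is a QSUL representation of $\Acal = \Acal'/Y$.

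Next, I would apply Lemma~\ref{Lemma:MultiplicityScaledUnimodular} to this QSUL, which yields directly
\begin{equation}
 m(S) = \gcd \bigg( \bigg\{ \prod_{e \in T} \abs{\delta_e} : T \text{ maximal independent subset of } S \cup Y \bigg\} \bigg)
\end{equation}
for every $S\subseteq E$. The only remaining task is to identify $\abs{\delta_e}$ with $\tilde m(\{e\})$ for each $e$ appearing in some such $T$.

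For this identification, fix a maximal independent subset $T$ of $S \cup Y$ in $\Acal'$ and let $e \in T$. Since $T$ is independent, $e$ is not a loop in $\Acal'$, so the $e$th column of $A$ is non-zero; as $A$ is totally unimodular, all entries of this column lie in $\{-1,0,1\}$, hence their greatest common divisor equals $1$. The $e$th column of $AD$ is $\delta_e$ times the $e$th column of $A$, so its entries have greatest common divisor $\abs{\delta_e}$. Because $\Acal'$ is torsion-free and represented by $AD$, Lemma~\ref{Lemma:MultiplicityFreeGroup}\eqref{Lemma:MultiplicityFreeGroupIndependent} applied to the independent singleton $\{e\}$ gives $\tilde m(\{e\}) = \abs{\delta_e}$. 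Substituting $\tilde m(\{e\})$ for $\abs{\delta_e}$ in the displayed formula above completes the proof.

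There is no substantial obstacle; the argument is essentially bookkeeping. The only point that requires a little care is ensuring that the lifting chosen in the definition of regular and strongly multiplicative can be taken to agree with the one produced by Proposition~\ref{Proposition:RegularScaledUnimodular}, which is guaranteed by the convention stated before that proposition. Once this alignment is made, the formula is an immediate consequence of Lemma~\ref{Lemma:MultiplicityScaledUnimodular} combined with the elementary observation that scaling a column with $\{-1,0,1\}$-entries by $\delta_e$ produces a column of gcd $\abs{\delta_e}$.
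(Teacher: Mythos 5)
Your proposal is correct and follows essentially the same route as the paper, whose proof is a one-line combination of Proposition~\ref{Proposition:RegularScaledUnimodular} and Lemma~\ref{Lemma:MultiplicityScaledUnimodular}. The only difference is that you explicitly verify the identification $\abs{\delta_e}=\tilde m(\{e\})$ via the gcd of the entries of a scaled totally unimodular column, a step the paper leaves implicit.
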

\begin{proof}
 This follows directly from Lemma~\ref{Lemma:MultiplicityScaledUnimodular}, using the fact that $\Acal$ can be represented by a quotient of a scaled unimodular list
 (Proposition~\ref{Proposition:RegularScaledUnimodular}).
\end{proof}

\begin{Lemma}
  \label{Lemma:MinorWeaklyMultiplicative}
  Let $X\in \Z^{d\times N}$ be a  
  doubly scaled unimodular list, \ie there is a totally unimodular matrix
  $A \in \Z^{ d \times N }$ and two diagonal matrices of full rank $T = \diag(t_1,\ldots, t_{d}) \in \Q^{d\times d}$ and 
  $D=\diag(\delta_1,\ldots, \delta_{N}) \in \Q^{N\times N}$
  \st $X = T A D $.
  Let $  I \subseteq [d] $  and  $ J \subseteq [ N ] $  be 
  two sets of the same cardinality.
  Let $X[I,J]$ denote the minor of $X$ whose rows are indexed by $I$ and whose columns are indexed by $J$.
  Then $X[I,J]$ is either equal to zero, or it satisfies
  \begin{equation}
     \abs{X[I,J]} = 
         \Biggl|     \prod_{i\in I} t_i \prod_{j\in J} \delta_j    \Biggr|   .
  \end{equation}

\end{Lemma}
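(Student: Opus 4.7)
The plan is a direct determinant computation that exploits the diagonal structure of $T$ and $D$. First I would observe that since $T = \diag(t_1,\ldots,t_d)$ is diagonal, left multiplication by $T$ simply scales row $i$ of $AD$ by $t_i$, and since $D = \diag(\delta_1,\ldots,\delta_N)$ is diagonal, right multiplication by $D$ scales column $j$ of $A$ by $\delta_j$. Consequently, the submatrix of $X = TAD$ indexed by rows $I$ and columns $J$ factors as
\begin{equation}
  (TAD)|_{I,J} \;=\; T_I \cdot A[I,J] \cdot D_J,
\end{equation}
where $T_I := \diag(t_i : i \in I)$ and $D_J := \diag(\delta_j : j \in J)$ are square diagonal matrices of size $|I| = |J|$.

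Next, I would apply multiplicativity of the determinant to obtain
\begin{equation}
  X[I,J] \;=\; \det(T_I) \cdot \det(A[I,J]) \cdot \det(D_J) \;=\; \Bigl(\prod_{i\in I} t_i\Bigr) \cdot \det(A[I,J]) \cdot \Bigl(\prod_{j\in J} \delta_j\Bigr).
\end{equation}
The last step is to invoke total unimodularity of $A$, which forces $\det(A[I,J]) \in \{-1, 0, 1\}$. If $\det(A[I,J]) = 0$, then $X[I,J] = 0$; otherwise $|\det(A[I,J])| = 1$, and taking absolute values yields the claimed equality.

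There is no real obstacle here; the statement is essentially a bookkeeping consequence of the fact that conjugating (or rather, sandwiching) a totally unimodular matrix between two diagonal matrices acts on each minor as multiplication by the product of the relevant diagonal entries. The only subtlety worth flagging is that $T_I$ and $D_J$ are the \emph{principal} diagonal submatrices of $T$ and $D$ on the specified index sets, so the factorization of $(TAD)|_{I,J}$ is genuinely a product of three square matrices, which is precisely what makes the determinant split multiplicatively.
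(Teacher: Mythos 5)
Your proof is correct and is essentially the paper's argument: the paper writes $x_{ij} = t_i a_{ij}\delta_j$ and invokes multilinearity of the determinant to pull out the factors $\prod_{i\in I} t_i \prod_{j\in J}\delta_j$, which is exactly what your factorization $(TAD)|_{I,J} = T_I \cdot A[I,J]\cdot D_J$ together with multiplicativity of $\det$ accomplishes. The final appeal to total unimodularity to get $\det(A[I,J])\in\{0,\pm 1\}$ matches the paper as well.
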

\begin{proof}
  For a doubly scaled unimodular list $X= TAD$, the matrix entries $x_{ij}$ and $a_{ij}$ satisfy
  $x_{ij} = t_i a_{ij} \delta_j$.
   Since the determinant is multilinear, we obtain $X[I,J]=   A[I,J] \,\cdot\,  \prod_{i\in I} t_i \prod_{j\in J} \delta_j $. Hence 
   if $A[I,J]$ is non-zero, this implies
   $\abs{X[I,J]}= \abs{ \prod_{i\in I} t_i \prod_{j\in J} \delta_j} $.
\end{proof}

\begin{proof}[Proof of Theorem~\ref{Theorem:PowerOfArithmeticMatroid}]
   Let $d$ denote the rank of $\Acal$.
   By Proposition~\ref{Proposition:RegularDoublyScaledUnimodular},
   $\Acal$ can be represented by a QDSUL  $ X \subseteq \Z^d \oplus \Z_{q_1} \oplus \ldots \oplus \Z_{q_n}$ for some $n, q_1,\ldots, q_n\in \Z_{\ge 0}$,
   \ie we can write $X = T A D / Y$ for suitable matrices $T=\diag(t_1,\ldots, t_{d+n})$, $A\in \Z^{(d+n)\times(N+n)}$
   totally unimodular, $D=\diag(\delta_1,\ldots, \delta_{N+n})$ and $Y\subseteq TAD$.
   We may assume that $A$ starts with a  $(d+n)\times (d+n)$-identity matrix and $Y$ is the sublist that consists of the 
   columns $d+1,\ldots, d+n$, \ie 
   $ Y =  ( q_1 e_{d+1},\ldots, q_n e_{d+n})$, 
   with $q_i = t_{d+i}\delta_{d+i}$ for $i\in [d]$.  As usual, $e_i$ denotes the $i$th unit vector.
   Let $Y_{k} := ( q_1^k e_{d+1},\ldots, q_n^k e_{d+n})   $ 
   and $ X_{k} := T^{k}  A D^{k}  / Y_{k}  \subseteq \Z^d  \oplus \Z_{q_1^k} \oplus \ldots \oplus \Z_{q_n^k}$.
   Since both $T$ and $D$ are diagonal matrices, $T^k A D^k$ has integer entries.
   It follows from 
     Lemma~\ref{Lemma:gcdPower},  Lemma~\ref{Lemma:generalmultiplicity}, and Lemma~\ref{Lemma:MinorWeaklyMultiplicative} 
   that $X_{k}$ represents the  arithmetic matroid $\Acal^k$.
\end{proof}

 \subsection{On arithmetic matroids defined by labelled graphs}
   \label{Subsection:LabelledGraphs}
   
   In this subsection we will prove the results on arithmetic matroids defined by labelled graphs that
   we stated in Subsection~\ref{Subsection:LabelledGraphsResults}.

\begin{proof}[Proof of Proposition~\ref{Proposition:LabelledGraphRepresentableRegularStronglyMultiplicative}]
  By definition, an arithmetic matroid defined by a labelled graph can be represented 
  by a QSUL.
  This implies regularity.
  Using   Proposition~\ref{Proposition:RegularScaledUnimodular}, this also implies 
  strong multiplicativity.
\end{proof}
 
  \begin{Lemma}
  \label{Lemma:MultiplicityLabelledGraph}
     Let $(\Gcal,\ell)$ be a labelled graph. Let $R$ denote its set of regular edges and $W$ its set of dotted edges.
     Let $\Acal(\Gcal,\ell)$ be the arithmetic matroid defined by this labelled graph.
     Then its multiplicity function satisfies
     \begin{equation}
      m(A) = \gcd \bigg( \bigg\{ \prod_{e\in T} \ell(e) : T \text{ maximal independent subset of } A \cup W \bigg\} \bigg)      
     \end{equation}
    for any $A\subseteq R$.
  \end{Lemma}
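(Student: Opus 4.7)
The plan is to realize the arithmetic matroid $\Acal(\Gcal,\ell)$ as the quotient of a scaled unimodular list, so that the formula becomes an immediate specialization of Lemma~\ref{Lemma:MultiplicityScaledUnimodular}.

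First I would consider the full edge list $X := X_R \cup X_W \subseteq \Z^n$, where $n = |V|$. For each edge $e = (v_i,v_j)$ the vector $x_e$ whose $i$th coordinate is $-\ell(e)$ and $j$th coordinate is $\ell(e)$ factors as $(e_j - e_i)\cdot \ell(e)$, so as a matrix we may write $X = B\cdot D$, where $B \in \Z^{n\times |E|}$ is the oriented vertex-edge incidence matrix of $\Gcal$ under the orientation $\theta$ and $D = \diag(\ell(e))_{e\in E}$ is the diagonal matrix of labels. It is classical --- and in any case follows from Proposition~\ref{Proposition:LabelledGraphRepresentableRegularStronglyMultiplicative} --- that the oriented incidence matrix of a graph is totally unimodular, so $X = BD$ is a scaled unimodular list in the sense of the paper.

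By the very definition of $\Acal(\Gcal,\ell)$, this arithmetic matroid is represented by $X_R$ in $\Z^n/\langle X_W\rangle$, that is, by the quotient $X/W$ of the scaled unimodular list $X = BD$ by the sublist $W = X_W$. Applying Lemma~\ref{Lemma:MultiplicityScaledUnimodular} with $\delta_e = \ell(e)$ for $e\in E$ and quotient sublist $Y = W$ then yields, for any $A\subseteq R$, exactly the claimed formula
\begin{equation*}
 m(A) \;=\; \gcd\Bigl(\Bigl\{\,\prod_{e\in T} \ell(e) : T \text{ maximal independent subset of } A \cup W\,\Bigr\}\Bigr).
\end{equation*}

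The whole argument is really just a translation, so no serious obstacle arises. The only mildly substantive point is the recognition of the product decomposition $X = BD$ together with the total unimodularity of the incidence matrix $B$; once these are in place, Lemma~\ref{Lemma:MultiplicityScaledUnimodular} delivers the result without further work.
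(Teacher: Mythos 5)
Your proof is correct and follows essentially the same route as the paper: both rest on recognizing the edge list as the scaled unimodular list $BD$ (incidence matrix times the diagonal of labels) and then invoking the gcd-of-products formula for quotients of such lists. The only difference is that you cite Lemma~\ref{Lemma:MultiplicityScaledUnimodular} directly, whereas the paper routes through the intermediate abstractions of Proposition~\ref{Proposition:LabelledGraphRepresentableRegularStronglyMultiplicative} and Lemma~\ref{Lemma:MultiplicityRegularMultiplicative}, which themselves reduce to the same lemma.
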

  \begin{proof}
    Since  $m(\{e\})=\ell(e)$ for an element $e$ of the ground set of $\Acal(\Gcal,\ell)$, this follows directly 
    from   Proposition~\ref{Proposition:LabelledGraphRepresentableRegularStronglyMultiplicative}
    and Lemma~\ref{Lemma:MultiplicityRegularMultiplicative}.
  \end{proof}

\begin{proof}[Proof of Proposition~\ref{Proposition:LabelledGraphsSquareable}]
 This follows directly from Lemma~\ref{Lemma:gcdPower} and 
 Lemma~\ref{Lemma:MultiplicityLabelledGraph}.
\end{proof}

\section{Towards a structural theory of arithmetic matroids}
\label{Section:Towards}

 Structural matroid theory is concerned with describing large families of matroids
 through certain structural properties. 
 This includes characterising matroids that are representable over a certain field through
 excluded minors (cf.~Theorem~\ref{Theorem:RegularExcludedMinors}).
 This line of research was inspired by earlier results in graph theory such as Kuratowski's characterisation
 of planar graphs through forbidden minors.
 Rota's conjecture states that representability of a matroid over a fixed finite field can be characterised by a finite list of excluded minors.
  A proof of this long-standing conjecture has recently been announced \cite{geelen-gerards-whittle}.
  For infinite fields, the situation is very different:
 Mayhew--Newman--Whittle proved that \emph{the missing axiom of matroid theory is lost forever},
  \ie   it is impossible to
 characterise representability  over an infinite field using a certain natural logical language  \cite{mayhew-newman-whittle-2018}.
 
 It is therefore an obvious question to ask, if there is a suitable axiom system that characterizes representable arithmetic matroids.
 The situation is a bit different from matroids, as there is no choice of the field involved.
 One cannot hope to find a simple method to decide if an arbitrary arithmetic matroid $ \Acal = (E, \rank, m)$ is representable.
 As every matroid can be turned into an arithmetic matroid by equipping it with the trivial multiplicity function $m\equiv 1$,
 this problem contains the question if a given matroid is representable over the rationals, which is impossible by the result mentioned
 in the previous paragraph.
 The following question is more interesting. 
 \begin{Question}
   Let $(E,\rank)$ be a matroid that is representable over the rational numbers.
   Is it possible to characterize the functions $m : 2^E \to \Z_{\ge 1}$ \st
   $\Acal = (E, \rank, m)$ is a representable arithmetic matroid?
 \end{Question}

 We are not able to answer this question in this article, but we will give some necessary conditions that must
 be satisfied by the multiplicity function of a representable arithmetic matroid. 
 This is somewhat similar to
 Ingleton's and Kinser's inequalities  
 \cite{cameron-mayhew-2016,ingleton-1971,kinser-2011} that must be satisfied by the rank function of a representable
 matroid.

\begin{Lemma}
\label{Lemma:GPrealisabilityCondition}
  Let $\Acal= (E, \rank, m)$ be a representable arithmetic matroid
  and
  let $r\ge 2$ be an integer. 
  Suppose that the  matroid $\Mcal = (E,\rank)$ has a minor $U$ of rank $r$ on $2r$ elements, \ie there are  
  disjoint subsets
  $I, J \subseteq  E$ \st $ (\Mcal/J)|_I = U$, $\abs{I}=2r$, and  $\rank(I\cup J) - \rank(J) = r$.

  Then for any partition $ S \cup T = I $ \st $ \abs{S} = r-1 $ and $ \abs{T} = r+1 $,
  there is a sign vector $ \sigma \in \{1,-1\}^\Tcal $ \st
  \begin{equation}
    \sum_{t\in \Tcal} \sigma_t  \cdot  m( S \cup \{ t \} \cup J) m(  ( T \setminus \{ t \} ) \cup J ) = 0,
  \end{equation}
 where $\Tcal := \{ t \in T :  T \setminus \{t\} \text{ and } S \cup \{t\} \text{ are independent in } \Mcal / J \}$.
\end{Lemma}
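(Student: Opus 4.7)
The plan is to apply a classical Grassmann--Pl\"ucker relation to a representation of the contraction $\Acal/J$ and then translate the resulting polynomial identity on minors into an identity on multiplicities. The arithmetic part reduces to controlling how torsion enters the multiplicity--determinant dictionary.

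First I would fix a representation $X=(x_e)_{e\in E}$ of $\Acal$ in some finitely generated abelian group $G$, and pass to the contraction: the list $X/J$ in $G':=G/\langle J\rangle$ represents $\Acal/J$, so by definition $m_{X/J}(A)=m(A\cup J)$ for $A\subseteq E\setminus J$. Restricting to $I$ and then projecting to the free part $\overline{G'}\cong \Z^r$ (well-defined since $\rank(I\cup J)-\rank(J)=r$), I obtain an integer matrix $Y\in\Z^{r\times 2r}$ whose underlying column matroid is exactly $(\Mcal/J)|_I = U$; in particular, for every $A\subseteq I$ with $|A|=r$, the set $A$ is a basis of $U$ if and only if $\Delta_A(Y)\neq 0$.

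Next I would apply the standard Pl\"ucker relation to $Y$ for the given partition $I=S\sqcup T$. Fixing an ordering $T=\{t_1,\ldots,t_{r+1}\}$, Theorem~\ref{Theorem:GPrelations} (in its familiar ``shuffle'' form with one row of indices of length $r-1$ and one of length $r+1$) yields
\begin{equation}
\sum_{k=1}^{r+1}(-1)^{k}\,\Delta_{S\cup\{t_k\}}(Y)\,\Delta_{T\setminus\{t_k\}}(Y)\;=\;0.
\end{equation}
For $t_k\notin\Tcal$, at least one of $S\cup\{t_k\}$ or $T\setminus\{t_k\}$ is dependent in $\Mcal/J$, so the corresponding $\Delta$ in $Y$ vanishes and the $k$-th term drops out. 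Hence the sum can be restricted to indices $t\in\Tcal$.

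Finally, I would translate determinants into multiplicities. For every basis $A\subseteq I$ of $(\Mcal/J)|_I$, Lemma~\ref{Lemma:TorsionIndependent} applied to the list $X/J$ in $G'$ gives
\begin{equation}
m(A\cup J)\;=\;m_{X/J}(A)\;=\;c\cdot|\Delta_A(Y)|, \qquad c:=|G'_t|,
\end{equation}
where crucially the factor $c$ does not depend on $A$ since all terms involve bases of the same rank $r$. Setting
\begin{equation}
\sigma_t \;:=\;(-1)^{k(t)}\,\sgn\!\bigl(\Delta_{S\cup\{t\}}(Y)\bigr)\,\sgn\!\bigl(\Delta_{T\setminus\{t\}}(Y)\bigr)\;\in\;\{+1,-1\}
\end{equation}
for $t\in\Tcal$ (where $k(t)$ is the index of $t$ in the chosen ordering), and substituting, I get
\begin{equation}
0 \;=\; \frac{1}{c^{2}}\sum_{t\in\Tcal}\sigma_t\,m(S\cup\{t\}\cup J)\,m((T\setminus\{t\})\cup J),
\end{equation}
which is the desired identity.

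The only potentially delicate point is the uniform torsion factor: one must check that the scalar $c$ in Lemma~\ref{Lemma:TorsionIndependent} is the \emph{same} for every basis $A$ appearing in the sum. This is automatic because $c=|G'_t|$ depends only on the ambient group $G'$, not on $A$, and all $A$ in question are independent of the same cardinality $r$; so the torsion factor cancels from the relation and no residual obstruction remains. Everything else is bookkeeping of signs.
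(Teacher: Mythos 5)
Your proposal is correct and follows essentially the same route as the paper's proof: contract by $J$, project to the free part to obtain an integer matrix representing $U$, apply the classical Grassmann--Pl\"ucker relation (with the terms outside $\Tcal$ vanishing), and convert determinants to multiplicities via Lemma~\ref{Lemma:TorsionIndependent}, the uniform torsion factor $\abs{G'_t}^2$ cancelling. The only cosmetic difference is that the paper works with the rank-$r$ free group spanned by the projected vectors rather than asserting that the whole free part of $G'$ is $\Z^r$, but this does not affect the argument.
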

 Note that if $U$ is the uniform matroid $U_{r, 2r}$, then $T = \Tcal$.
\begin{proof}[Proof of Lemma~\ref{Lemma:GPrealisabilityCondition}]
 Suppose $\Acal$ is represented by a list of vectors $X = (x_e)_{e\in E}$ that is contained in a 
 finitely generated abelian group $G$.
 Then the minor $U$  is  represented by the  list of vectors $X' := (X/J)|_I \subseteq G / \langle J \rangle$.
 As usual, $\latproj X'$ denotes the image of the projection of $X'$ to the free group $( G / \langle J \rangle)/G_t$,  where $G_t$ denotes the torsion subgroup of $G/\langle J \rangle$.
 $\latproj X'$ spans a free group $G_U \cong \Z^r$.
 Let $B \subseteq I $ be a basis of $\latproj X'$.
 Using Lemma~\ref{Lemma:TorsionIndependent},
 we  obtain
 \begin{equation}
 \label{equation:MultiplicityQuotient}
   m_X( B \cup J) = m_{X'}( B ) =   m_\latproj{X'}(B) \cdot \abs{G_t}
   = \abs{\det(B)} \cdot \abs{G_t}.
 \end{equation}
 The Grassmann--Pl\"ucker relations (Theorem~\ref{Theorem:GPrelations}) for $\latproj X'$ (as a list of vectors in $G_U \otimes \R \cong \R^r$) imply  that
 for a suitable sign vector $\sigma\in \{1,-1\}^\Tcal$
\begin{equation}
   \label{equation:GPcondition}
    \sum_{ t\in \Tcal } \sigma_t \cdot \det (  S \cup \{ t \} )   \det (  T \setminus \{ t \} ) = 0 \text{ holds.}
\end{equation}
 Here,  $S \cup \{ t \}$ and  $T \setminus \{ t \}$ denote the two square submatrices of $\latproj{X}'$ whose columns are indexed by the two sets.
 It is sufficient to sum over $\Tcal$, since for $t \in T \setminus \Tcal$, 
 $\det (  S \cup \{ t  \} )   \det (  T \setminus \{ t  \} ) = 0 $.
 Multiplying \eqref{equation:GPcondition}  by $\abs{G_t}^2$ and using equation \eqref{equation:MultiplicityQuotient}, we obtain 
\begin{equation*}
    \sum_{ t\in \Tcal } \sigma_t \cdot m_X (  S \cup \{ t \} \cup J )   m_X (  ( T \setminus \{ t \}) \cup J ) = 0. \qedhere
\end{equation*}
\end{proof}

  We will 
  now define a necessary condition for representability that is based on 
  Lemma~\ref{Lemma:GPrealisabilityCondition}.

\begin{Definition}
  Let $\Acal= (E, \rank, m)$ be an arithmetic matroid with underlying matroid $ \Mcal=(E,\rank) $.
  We say that $\Acal$ is  \emph{$r$-Grassmann--Pl\"ucker}, or $(\GPr)$ for short, 
  if the following condition is satisfied:

  Let $U$ be a minor of rank $r$ on $2r$ elements, \ie
  there are  
  disjoint subsets
  $ I, J \subseteq E$ \st $ (\Mcal /J)|_I  = U$,
  $\abs{I}=2r$, and  $\rank(I\cup J) - \rank(J) = r$.
  Then
  for any partition $ S \cup T = I  $  \st $ \abs{S} = r - 1 $ and $ \abs{T} = r+1 $,
  there is a sign vector $ \sigma \in \{1, -1\}^\Tcal$ \st
  \begin{equation}
    \sum_{ t \in \Tcal } \sigma_t  \cdot  m( S \cup \{ t \} \cup J ) m( T \setminus \{ t \} \cup J ) = 0,
  \end{equation}
 where $\Tcal := \{ t \in T :  T \setminus \{t\} \text{ and } S \cup \{t\} \text{ are independent in } \Mcal / J \}$.
\end{Definition}

 Recall that we proved
 Theorem~\ref{Theorem:NonRegularNonRepresentable} by showing that
 if $\Acal=(E,\rank,m)$ is representable, then for any non-negative integer $k\neq 1$, $\Acal^k=(E,\rank,m^k)$ does not satisfy $(\GP\!_2)$.

  The conditions in Lemma~\ref{Lemma:GPrealisabilityCondition}
  are not sufficient for representability. 
  There are arithmetic matroids of rank $1$ that are not representable.
  Hence they 
  trivially satisfy $\GPr$ for all $r \ge 2$.
  An example of such an arithmetic matroid is $\Acal_{12}$ in 
  Example~\ref{Example:ArithmeticMatroidsNoTwoFunctions}.

 Very recently, Pagaria introduced the class of orientable arithmetic matroids. They satisfy a property 
  that implies $(\GPr)$ for all $r$ \cite{pagaria-orientable-2018}.

 \begin{Remark}
  Grassmann--Pl\"ucker relations also play an important role in the theory of
  oriented matroids, valuated matroids, and more generally, matroids over hyperfields. 
  Let $E$ be a finite ground set, $r\ge 0$ an integer, and let $\K$ denote the Krasner hyperfield.
  Baker and Bowler showed that 
  there is a natural bijection between equivalence classes of 
  alternating non-zero functions $\varphi: E^r\to \K$
  that satisfy the Grassmann--Pl\"ucker relations 
  and matroids of rank $r$ on $E$
  \cite{baker-bowler-2019}. 
  As both, their work and ours, deal with Grassmann--Pl\"ucker relations and matroids,
  it would be interesting to find a connection.
  However, Baker and Bowler point out that their theory is quite different from 
  the theory of arithmetic matroids and their generalisation, matroids over a ring 
    \cite[Section~1.7]{baker-bowler-2019}.
 \end{Remark}

 \begin{Remark} 
  A matroid over $\Z$ can be seen as an arithmetic matroid that has some additional structure, \eg 
  a finitely-generated abelian group is attached to each subset of the ground set \cite{fink-moci-2016}.
  In certain cases,
  \eg when counting   generalized flows and colourings on a list of elements of a finitely generated abelian group,
  this additional structure is required to obtain interesting combinatorial information
  \cite[Remark~7.2]{branden-moci-2014}.
  It is natural to ask whether a matroid over $\Z$ satisfies $\GPr$ or vice versa, an arithmetic matroid that satisfies
  $\GPr$ can be equipped with a matroid over $\Z$ structure.
  In general, matroids over $\Z$ do not satisfy $\GPr$. An example is the arithmetic matroid
  with underlying matroid $U_{2,4}$, whose basis multiplicities are all equal to $1$, except for one, which is
  equal to some $t\ge 3$. This arithmetic matroid can be equipped with a uniquely determined matroid over $\Z$ structure, but it does not satisfy
  $(\GP\!_2)$.
 On the other hand, the arithmetic matroid in 
 Example~\ref{Example:ArithmeticMatroidsNoTwoFunctions}
 trivially satisfies $\GPr$ for all $r$, but it is not 
  a matroid over $\Z$: 
  if this was the case, there would be a $\Z$-module $M(A)$ attached to each $A\subseteq E$ and these modules would satisfy certain conditions.
  In particular, 
  $M(\{1\}) \cong \Z \oplus \Z/6\Z$,  $M(\{1,2\})=\{ 0 \}$ and there is $x\in M(\{1\})$ \st 
  $M(\{1\})/(x) \cong M(\{1,2\})$. This is not possible.
 \end{Remark}

\paragraph{Acknowledgements}

The author would like to thank Spencer Backman for raising the question whether
given a representable arithmetic matroid $\Acal$, the arithmetic matroid $\Acal^2$
is also representable.
He is grateful to 
 Alessio D'Al\`i,
 Emanuele Delucchi,
 Jan Draisma,
 Linard Hoessly,
 Michael Joswig,
 Lars Kastner,
 Martin Papke,
 Elia Saini,
 and
 Benjamin Schr\"oter
 for many interesting discussions and helpful advice.
 The mathematics software 
 {\tt sage} \cite{sage-73} was used to calculate some examples.
 \newcommand{\MR}[1]{} 

\section*{References}
\bibliographystyle{amsplain}
\providecommand{\bysame}{\leavevmode\hbox to3em{\hrulefill}\thinspace}
\providecommand{\MR}{\relax\ifhmode\unskip\space\fi MR }
\providecommand{\MRhref}[2]{%
  \href{http://www.ams.org/mathscinet-getitem?mr=#1}{#2}
}
\providecommand{\href}[2]{#2}

\end{document}